\title{A compactification of outer space which is an absolute retract}
\author{Mladen Bestvina and Camille Horbez\thanks{The first author gratefully acknowledges the support by the National Science Foundation under the grant number DMS-1308178. The second author is grateful to the University of Utah for its hospitality.}}
\begin{document}
\maketitle
\newtheorem{de}{Definition} [section]
\newtheorem{theo}[de]{Theorem} 
\newtheorem{prop}[de]{Proposition}
\newtheorem{lemma}[de]{Lemma}
\newtheorem{cor}[de]{Corollary}
\newtheorem{propd}[de]{Proposition-Definition}

\theoremstyle{remark}
\newtheorem{rk}[de]{Remark}
\newtheorem{ex}[de]{Example}
\newtheorem{question}[de]{Question}

\normalsize

\addtolength\topmargin{-.5in}
\addtolength\textheight{1.in}
\addtolength\oddsidemargin{-.045\textwidth}
\addtolength\textwidth{.09\textwidth}

\newcommand{\imp}{\Rightarrow}
\newcommand{\ra}{\rightarrow}
\newcommand{\m}{^{-1}}
\newcommand{\dunion}{\sqcup}
\newcommand{\eps}{\varepsilon}
\renewcommand{\epsilon}{\varepsilon}
\newcommand{\calf}{\mathcal{F}}
\newcommand{\caly}{\mathcal{Y}}
\newcommand{\calb}{\mathcal{B}}
\newcommand{\calq}{\mathcal{Q}}
\newcommand{\Z}{\mathcal{Z}}
\newcommand{\Oo}{\mathcal{O}}
\newcommand{\calu}{\mathcal{U}}
\newcommand{\calv}{\mathcal{V}}
\newcommand{\calw}{\mathcal{W}}
\newcommand{\bbR}{\mathbb{R}}
\newcommand{\bbZ}{\mathbb{Z}}
\newcommand{\actson}{\curvearrowright}
\newcommand{\es}{\emptyset}
\newcommand{\grp}[1]{\langle #1 \rangle}
\newcommand{\dg}{\dagger}
\newcommand{\Char}{\text{Char}}

\def\R{{\mathbb R}}
\makeatletter
\edef\@tempa#1#2{\def#1{\mathaccent\string"\noexpand\accentclass@#2 }}
\@tempa\rond{017}
\makeatother

\begin{abstract}
We define a new compactification of outer space $CV_N$ (the
\emph{Pacman compactification}) which is an absolute retract, for
which the boundary is a $Z$-set. The classical compactification
$\overline{CV_N}$ made of very small $F_N$-actions on
$\mathbb{R}$-trees, however, fails to be locally $4$-connected as soon
as $N\ge 4$. The Pacman compactification is a blow-up of
$\overline{CV_N}$, obtained by assigning an orientation to every arc
with nontrivial stabilizer in the trees.
\end{abstract}

\section*{Introduction}

The study of the group $\text{Out}(F_N)$ of outer automorphisms of a
finitely generated free group has greatly benefited from the study of
its action on Culler--Vogtmann's outer space $CV_N$. It is therefore
reasonable to look for compactifications of $CV_N$ that have ``nice"
topological properties. The goal of the present paper is to construct
a compactification of $CV_N$ which is a compact, contractible,
finite-dimensional absolute neighborhood retract (ANR), for which the
boundary is a Z-set.

One motivation for finding ``nice" actions of a group $G$ on absolute
retracts comes from the problem of solving the Farrell--Jones
conjecture for $G$, see \cite[Section 1]{BLR08}. For instance, it was
proved in \cite{BM91} that the union of the Rips complex of a
hyperbolic group together with the Gromov boundary is a compact, contractible
ANR, and this turned out to be a crucial ingredient in the proof by
Bartels--Lück--Reich of the Farrell--Jones conjecture for hyperbolic
groups \cite{BLR08}. A similar approach was recently used by Bartels
to extend these results to the context of relatively hyperbolic groups
\cite{Bar15}, and by Bartels--Bestvina to the context of mapping class
groups \cite{BB}.
\\
\\
\indent We review some terminology. A compact metrizable space $X$ is said to
be an \emph{absolute (neighborhood) retract} (AR or ANR) if for every compact
metrizable space $Y$ that contains $X$ as a closed subset, the
space $X$ is a (neighborhood) retract of $Y$. Given $x\in X$, we say that $X$ is \emph{locally contractible ($LC$) at $x$} if for every open
neighborhood $\calu$ of $x$, there exists an open neighborhood
$\calv\subseteq\calu$ of $x$ such that the inclusion map
$\calv\hookrightarrow\calu$ is nullhomotopic. More generally, $X$ is
{\it locally $n$-connected ($LC^n$) at $x$} if for every open
neighborhood $\calu$ of $x$, there exists an open neighborhood
$\calv\subseteq\calu$ of $x$ such that for every $0\leq i\leq n$,
every continuous map $f:S^i\to \calv$ from the $i$-sphere is nullhomotopic in
$\calu$. We then say that $X$ is $LC$ (or $LC^n$) if it is $LC$ (or $LC^n$) at every point $x\in X$.

A nowhere dense closed subset $Z$ of a compact metrizable space $X$ is a \emph{Z-set} if $X$ can be instantaneously
homotoped off of $Z$, i.e. if there exists a homotopy $H:X\times
[0,1]\to X$ so that $H(x,0)=x$ and $H(X\times(0,1])\subseteq
  X\setminus Z$. Given $z\in Z$, we say that $Z$ is \emph{locally complementarily contractible ($LCC$) at $z$}, resp.\ \emph{locally complementarily $n$-connected ($LCC^n$) at $z$}, if for every open neighborhood $\calu$ of $z$ in $X$, there exists a smaller open neighborhood $\calv\subseteq\calu$ of $z$ in $X$ such that the inclusion $\calv\setminus Z\hookrightarrow\calu\setminus Z$ is nullhomotopic in $X$, resp.\ trivial in $\pi_i$ for all $0\le i\le n$. We then say that $Z$ is $LCC$ (resp.\ $LCC^n$) in $X$ if it is $LCC$ (resp.\ $LCC^n$) at every point $z\in Z$. 
  
Every ANR space is locally contractible. Further, if $X$ is an ANR, and $Z\subseteq X$ is a Z-set, then $Z$ is $LCC$ in $X$. Conversely, it is a classical fact that every finite-dimensional, compact, metrizable, locally contractible space $X$ is an ANR, and further, an $n$-dimensional $LC^n$ compact metrizable
space is an ANR, see \cite[Theorem V.7.1]{hu}. If $X$ is further assumed to
be contractible then $X$ is an AR. In Appendix \ref{sec-miracle} of the present paper, we will establish (by similar methods) a slight generalization of this fact, showing that if $X$ is an $n$-dimensional compact metrizable space, and $Z\subseteq X$ is a nowhere dense closed subset which is $LCC^n$ in $X$, and such that $X\setminus Z$ is $LC^n$, then $X$ is an ANR and $Z$ is a Z-set in $X$.
\\ 
\\ 
\indent Culler--Vogtmann's \emph{outer
    space} $CV_N$ can be defined as the space of all $F_N$-equivariant
  homothety classes of free, minimal, simplicial, isometric
  $F_N$-actions on simplicial metric trees (with no valence $2$
  vertices).  Culler--Morgan's compactification of outer space \cite{CM87} can be described by
  taking the closure in the space of all $F_N$-equivariant homothety
  classes of minimal, nontrivial $F_N$-actions on $\mathbb{R}$-trees,
  equipped with the equivariant Gromov--Hausdorff topology. The
  closure $\overline{CV_N}$ identifies with the space of homothety
  classes of minimal, \emph{very small} $F_N$-trees
  \cite{CL95,BF94,Hor14-2}, i.e.\ those trees whose arc stabilizers are
  cyclic and root-closed (possibly trivial), and whose tripod stabilizers are trivial. Outer space $CV_N$ is contractible and locally contractible \cite{CV86}.

When $N=2$, the closure $\overline{CV_2}$ was completely described by
Culler--Vogtmann in \cite{CV91}. The closure of reduced outer space
(where one does not allow for separating edges in the quotient graphs)
is represented on the left side of Figure \ref{fig-cv2}: points in the circle at
infinity represent actions dual to measured foliations on a
once-punctured torus, and there are ``spikes" coming out corresponding
to simplicial actions where some edges have nontrivial
stabilizer. These spikes prevent the boundary
$\overline{CV_2}\setminus CV_2$ from being a Z-set in
$\overline{CV_2}$: these are locally separating subspaces in
$\overline{CV_2}$, and therefore $\overline{CV_2}$ is not LCC (it is not even $LCC^0$) at points on these spikes. More surprisingly, while $\overline{CV_2}$ is an absolute retract (and we believe that so is
$\overline{CV_3}$), this property
fails as soon as $N\ge 4$.

\begin{figure}
\begin{center}
\includegraphics[width=.3\textwidth]{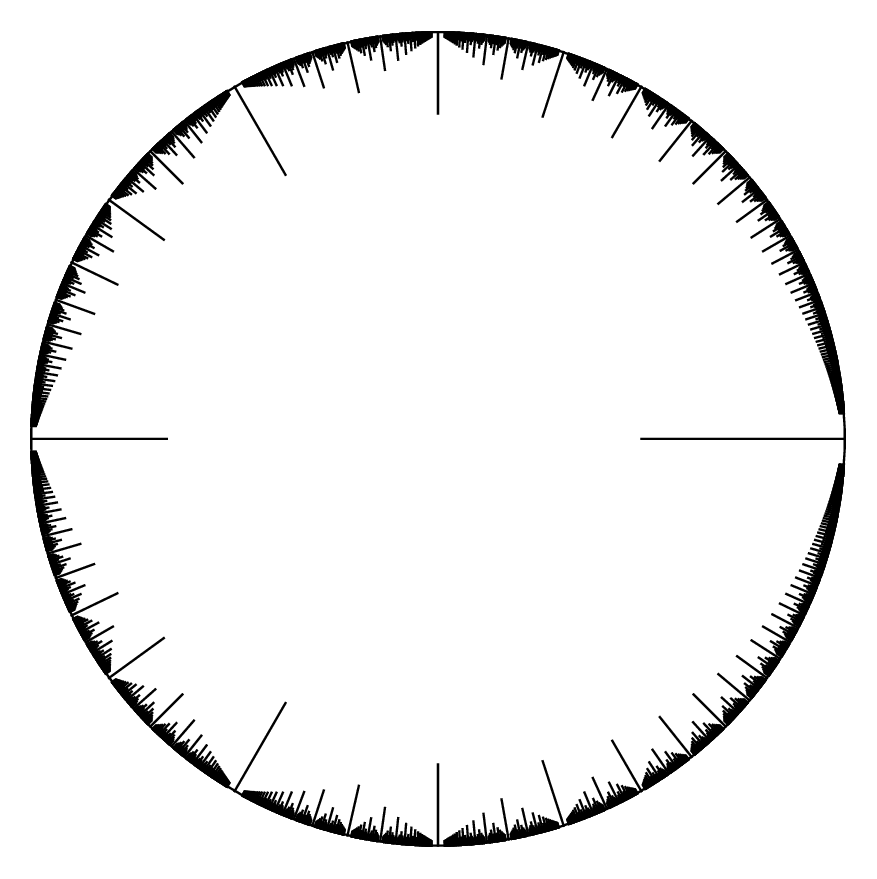}
~~~~~~~~~~~~
\includegraphics[width=.3\textwidth]{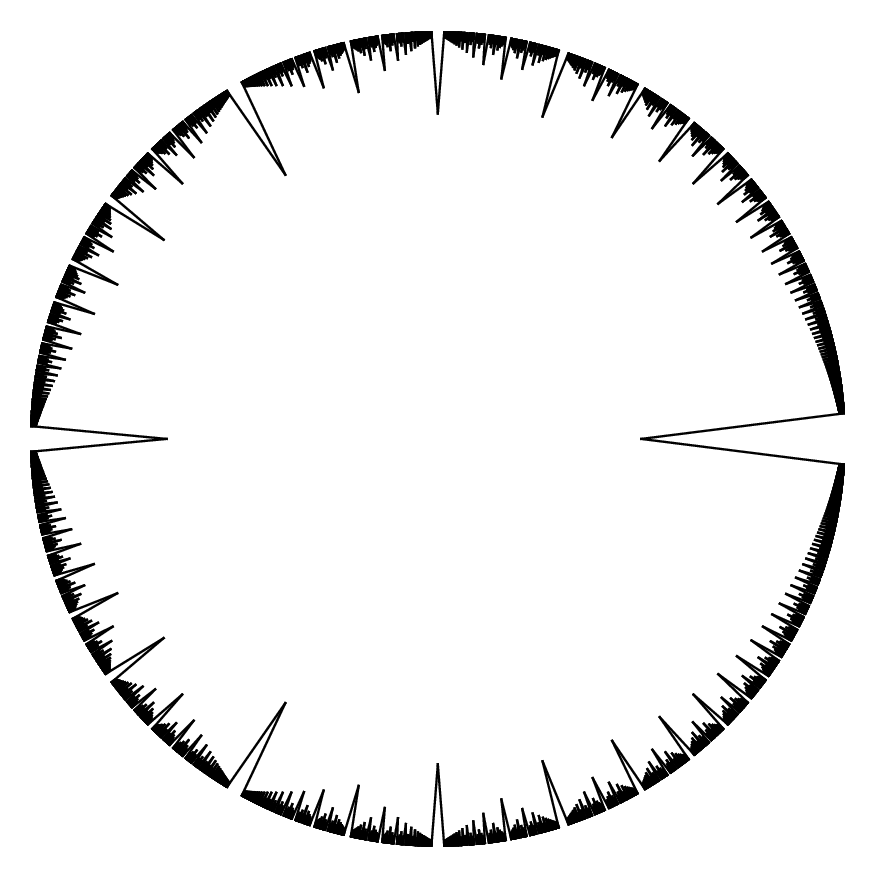}
\caption{The reduced parts of the classical compactification $\overline{CV_2}$ (on the left), and of the Pacman compactification $\widehat{CV_2}$ (on the right).}
\label{fig-cv2}
\end{center}
\end{figure}

\theoremstyle{plain}
\newtheorem*{theo:1}{Theorem 1}

\begin{theo:1}\label{notar}
For all $N\ge 4$, the space $\overline{CV_N}$ is not locally $4$-connected, hence it is not an AR.
\end{theo:1} 

There are however many trees in $\overline{CV_N}$ at which $\overline{CV_N}$ is locally contractible: for example, we prove in Section \ref{sec-triv} that $\overline{CV_N}$ is locally contractible at any tree with trivial arc stabilizers. The reason why local $4$-connectedness fails in general is the following. When $N\ge 4$, there are trees in $\overline{CV_N}$ that contain
both a subtree dual to an arational measured foliation on a nonorientable
surface $\Sigma$ 
of genus $3$ with a single boundary curve 
$c$, and a simplicial edge with nontrivial stabilizer $c$. We construct such a tree $T_0$ (see Section \ref{sec-not-ar} for its precise definition), at which $\overline{CV_N}$ fails to be locally $4$-connected, due to the combination of the
following two phenomena.
\begin{itemize}
\item The space $X(c)$ of trees where $c$ fixes a nondegenerate arc locally
  separates $\overline{CV_N}$ at $T_0$. \item The subspace of
  $\mathcal{PMF}(\Sigma)$ made of foliations which are dual to very
  small $F_N$-trees contains arbitrarily small embedded $3$-spheres which are not
  nullhomologous: these arise as $\mathcal{PMF}(\Sigma')$ for some
  orientable subsurface $\Sigma'\subseteq\Sigma$ which is the
  complement of a Möbius band in $\Sigma$. Notice here that a tree dual to a geodesic curve on $\Sigma$ may fail to be
very small, in the case where the curve is one-sided in $\Sigma$.
\end{itemize} 
\noindent We will find an open neighborhood $\calu$ of $T_0$ in $\overline{CV_N}$ such that for any smaller neighborhood $\calv\subseteq\calu$ of $T_0$, we can find a $3$-sphere $S^3$ in $X(c)\cap\calv$ (provided by the second point above) which is not nullhomologous in $X(c)\cap\calu$, but which can be capped off by balls $B^4_{\pm}$ in each of the two complementary components of $X(c)\cap\calv$ in $\calv$. By gluing these two balls along their common boundary $S^3$, we obtain a $4$-sphere in $\calv$, which is shown not to be nullhomotopic within $\calu$ by appealing to a \v{C}ech homology argument, presented in Appendix~\ref{sec-Cech} of the paper. 

It would be of interest to have a better
understanding of the topology of the space $\mathcal{PMF}(\Sigma)$ in
order to have a precise understanding of the failure of local
connectivity of $\overline{CV_N}$.   
\\
\\
\textit{Question:} Is $\overline{CV_N}$ locally 3-connected for every $N$? Is $\overline{CV_3}$ locally contractible?
\\
\\
\indent However, we also build a new 
compactification $\widehat{CV_N}$ of $CV_N$ (a blow-up of
$\overline{CV_N}$) which is an absolute retract, for which the
boundary is a $Z$-set. The remedy to the bad phenomena described above
is to prescribe orientations in an $F_N$-equivariant way to all arcs
with nontrivial stabilizers in trees in $\overline{CV_N}$, which has the effect in particular to ``open up" $\overline{CV_N}$ at the problematic spaces $X(c)$. In other
words, the characteristic set of any element $g\in
F_N\setminus\{e\}$ in a tree $T$ is given an orientation as soon
as it is not reduced to a point: when $g$ acts as a hyperbolic
isometry of $T$, its axis comes with a natural orientation, and we
also decide to orient the edges with nontrivial stabilizers. Precise
definitions of $\widehat{CV_N}$ and its topology are given in Section
\ref{sec-pacman} of the present paper. In rank $2$, this operation has the
effect of ``cutting" along the spikes (see Figure \ref{fig-cv2}),
which leads us to call this new compactification the \emph{Pacman
  compactification} of outer space.

\theoremstyle{plain}
\newtheorem*{theo:2}{Theorem 2}

\begin{theo:2}\label{ar}
The space $\widehat{CV_N}$ is an absolute retract of dimension $3N-4$,
and $\widehat{CV_N}\setminus CV_N$ is a $Z$-set.
\end{theo:2}

The space $\widehat{CV_N}$ is again compact, metrizable and
finite-dimensional: this is established in Section \ref{sec-pacman} of the
present paper from the analogous results for $\overline{CV_N}$. Also, we show in Section \ref{sec-ar} that 
every point in $\widehat{CV_N}$ is a limit of points in $CV_N$. The
crucial point for proving Theorem~2 is to show that the boundary $\widehat{CV_N}\setminus CV_N$ is locally complementarily contractible.

The proof of this last fact is by induction on the rank $N$, and the
strategy is the following. Given a tree $T\in\widehat{CV_N}\setminus CV_N$,
one can first approximate $T$ by trees that split as graphs of actions
over free splittings of $F_N$, and admit $1$-Lipschitz
$F_N$-equivariant maps to $T$. Using our induction hypothesis (and
working in the outer space of each of the factors that are elliptic in the splitting), we
prove that subspaces in $\widehat{CV_N}$
made of trees that split as graphs of actions over a given free splitting are locally complementarily contractible at every point in the boundary. We also find a continuous way of deforming a neighborhood of $T$ into one of these subspaces, so that $T$ is sent to a nearby tree. This
enables us to prove that $\widehat{CV_N}\setminus CV_N$ is locally complementarily contractible at $T$.

\paragraph*{Acknowledgments.} We would like to thank the anonymous referee for their numerous suggestions to improve the exposition of the paper.

\setcounter{tocdepth}{1}
\tableofcontents

\section{The space $\overline{CV_N}$ is not an AR when $N\ge 4$.}\label{sec-not-ar}

\subsection{Review: Outer space and Culler--Morgan's compactification}\label{sec-def}

\paragraph*{Outer space and its closure.} Let $N\ge 2$. \emph{Outer space} $CV_N$ (resp. \emph{unprojectivized
  outer space} $cv_N$) is the space of $F_N$-equivariant homothety
(resp.\ isometry) classes of simplicial, free, minimal, isometric
$F_N$-actions on simplicial metric trees, with no valence $2$
vertices. Here we recall that an $F_N$-tree is \emph{minimal} if it does not contain any proper $F_N$-invariant subtree. Unprojectivized outer space can be embedded into the space
of all $F_N$-equivariant isometry classes of minimal $F_N$-actions on
$\mathbb{R}$-trees, which is equipped with the \emph{equivariant Gromov--Hausdorff
  topology} introduced in \cite{Pau88,Pau89}. This is the
topology for which a basis of open neighborhoods of a tree $T$ is
given by the sets $\mathcal{N}_T(K,X,\epsilon)$ (where $K\subseteq T$
is a finite set of points, $X\subseteq F_N$ is a finite subset, and
$\epsilon>0$), defined in the following way: an $F_N$-tree $T'$ belongs
to $\mathcal{N}_T(K,X,\epsilon)$ if there exists a finite set
$K'\subseteq T'$ and a bijection $K\to K'$ such that for all $x,y\in
K$ and all $g\in X$, one has $|d_{T'}(x',gy')-d_{T}(x,gy)|<\epsilon$
(where $x',y'$ are the images of $x,y$ under the bijection). The
closure $\overline{cv_N}$ was identified in \cite{CL95,BF94,Hor14-2}
with the space of $F_N$-equivariant isometry classes of minimal, very
small actions of $F_N$ on $\mathbb{R}$-trees (an action is called
\emph{very small} if arc stabilizers are cyclic and root-closed [possibly trivial], and
tripod stabilizers are trivial). Note that we allow for the trivial
action of $F_N$ on a point in $\overline{cv_N}$. The compactification
$\overline{CV_N}$ is the space of homothety classes of nontrivial
actions in $\overline{cv_N}$. In the present paper, for carrying induction arguments, we will need to allow for the case where $N=1$, in which case $cv_1$ is the collection of all possible isometry classes of $\mathbb{Z}$-actions on the real line (these are just parameterized by the translation length of the generator), and $\overline{cv_1}$ is obtained by adding the trivial action on a point. 

\paragraph*{Structure of the trees in $\overline{cv_N}$: the Levitt decomposition.} A \emph{splitting} of $F_N$ is a minimal, simplicial $F_N$-tree. A tree $T\in\overline{cv_N}$ is said to \emph{split as a graph of actions} over a splitting $S$ if there exist 
\begin{itemize}
\item for each vertex $u$ of $S$ with stabilizer $G_u$, a $G_u$-tree $T_u$ such that if $e$ is an edge of $S$ incident on $u$, then $G_e$ is elliptic in $T_u$,
\item for each edge $e=uv$ of $S$, points $x_{e,u}\in T_u$ and $x_{e,v}\in T_v$, both stabilized by $G_e$,
\item for each edge $e=uv$ of $S$, a segment $I_e=[y_u,y_v]$ (possibly of length $0$),
\end{itemize}
\noindent where all these data are $F_N$-equivariant, such that $T$ is obtained from the disjoint union of the trees $T_u$ and the segments $I_e$ by attaching every vertex $x_{e,u}\in T_u$ to the extremity $y_u$ of $I_e$. See e.g.\ \cite{Lev94}, although in the present paper, we allow some of the segments $I_e$ to have length $0$.

By a result of Levitt \cite{Lev94}, every tree $T\in \overline{cv_N}$
splits uniquely as a graph of actions in such a way that vertices of
the splitting correspond to connected components of the closure of the
set of branch points in $T$, and edges correspond to maximal arcs
whose interior contains no branch point of $T$. This splitting will be
refered to as the \emph{Levitt decomposition} of $T$. All vertex
actions $G_v\actson T_v$ of this decomposition have dense $G_v$-orbits
(the group $G_v$ might be trivial, and the tree $T_v$ might be reduced
to a point). The underlying simplicial tree $S$ of the splitting is
very small, and all its edges $e$ yield segments $I_e$ of positive
length in $T$. Every very small $F_N$-tree with dense orbits has
trivial arc stabilizers, see e.g. \cite[Remark~1.9]{BF94} or
\cite[Proposition~I.10]{GL95}. Therefore, every tree
$T\in\overline{cv_N}$ has only finitely many orbits of maximal arcs
with nontrivial stabilizer. Moreover, if $1\neq Z<F_N$ is cyclic the
fixed point set $Fix(Z)\subset T$ is empty, or a point, or an arc, and
if in addition $1\neq Z'<Z$ then $Fix(Z')=Fix(Z)$.

\paragraph*{Characteristic sets of elements in a very small $F_N$-tree.}
The {\it characteristic set} $\text{Char}_T(g)$ of an element
$g\in F_N$ in an $F_N$-tree $T$ is its axis if $g$ is hyperbolic and
its fixed point set if $g$ is elliptic. When $T$ is very small, the
characteristic set of a nontrivial elliptic element is a closed
interval (possibly a point). An important observation for us  is that if characteristic sets of $g$
and $h$ intersect in more than a point in $T$, then the same is true in a
neighborhood of (the homothety class of) $T$ in $\overline{CV_N}$.

\paragraph*{Morphisms between $F_N$-trees and a semi-flow on $\overline{CV_N}$.} A \emph{morphism} between two $F_N$-trees $T$ and $T'$ is an
$F_N$-equivariant map $f:T\to T'$, such that every segment $I\subseteq
T$ can be subdivided into finitely many subsegments $I_1,\dots,I_k$,
so that for all $i\in\{1,\dots,k\}$, the map $f$ is an isometry when 
restricted to $I_i$. Notice in particular that every morphism is $1$-Lipschitz. A morphism $f:T\to T'$ is \emph{optimal} if in
addition, for every $x\in T$, there is an open arc $I\subseteq T$
containing $x$ in its interior on which $f$ is one-to-one. We denote by $\mathcal{A}$ the space of isometry
classes of all $F_N$-trees, and by $\text{Opt}(\mathcal{A})$ the space of optimal morphisms between trees in $\mathcal{A}$, which is equipped with the equivariant Gromov--Hausdorff
topology, see \cite[Section 3.2]{GL07}. The following statement can be found in \cite[Section 3]{GL07}, it is based on work of Skora \cite{Sko89} inspired by an idea of Steiner.

\begin{prop}(Skora \cite{Sko89}, Guirardel--Levitt \cite{GL07})\label{factor}
There exist continuous maps $H:\text{Opt}(\mathcal{A})\times [0,1]\to \mathcal{A}$ and $\Phi,\Psi:\text{Opt}(\mathcal{A})\times [0,1]\to\text{Opt}(\mathcal{A})$ such that
\begin{itemize}
\item for all $f\in\text{Opt}(\mathcal{A})$ and all $t\in [0,1]$, the tree $H(f,t)$ is the range of the morphism $\Phi(f,t)$ and the source of the morphism $\Psi(f,t)$,
\item for all $f\in\text{Opt}(\mathcal{A})$, we have $\Phi(f,0)=id$ and $\Psi(f,0)=f$, 
\item for all $f\in\text{Opt}(\mathcal{A})$, we have $\Phi(f,1)=f$ and $\Psi(f,1)=id$, and 
\item for all $f\in\text{Opt}(\mathcal{A})$ and all $t\in [0,1]$, we have $\Psi(f,t)\circ\Phi(f,t)=f$.
\end{itemize}
\end{prop}

$$\xymatrix@C=1cm@R=1cm{
\text{source}(f)\ar[rr]^{f}\ar[dr]_{\Phi(f,t)}&&\text{range}(f)\\
& H(f,t)\ar[ur]_{\Psi(f,t)}&
}
$$

The proof of Proposition \ref{factor} goes as follows: given a morphism
$f:T_0\to T_1$, one first defines for all $t\in [0,1]$ a minimal
$F_N$-tree $T_t$, as the quotient space $T_0/{\sim_t}$, where $a\sim_t
b$ whenever $f(a)=f(b)$ and $\tau(a,b):=\sup_{x\in
  [a,b]}d_{T_1}(f(a),f(x))\le t$. The
morphism $f$ factors through optimal morphisms $\phi_t:T_0\to T_t$ and
$\psi_t:T_t\to T_1$ which vary continuously with $f$. 
We then let $H(f,t):=T_t$, $\Phi(f,t):=\phi_t$ and $\Psi(f,t):=\psi_t$. The path $(H(f,t))_{t\in [0,1]}$ will be called the \emph{canonical folding path} directed by $f$.

We will now make a few observations about the above construction. We recall that the \emph{bounded backtracking (BBT) constant} of a morphism $f:T_0\to T_1$, denoted by $BBT(f)$, is defined as the maximal real number such that for all $x,y\in T_0$ and all $z\in [x,y]$, we have $d_{T_1}(f(z),[f(x),f(y)])\le C$. We make the following observation.

\begin{lemma}\label{BBT}
For all $t\in [0,1]$, we have $BBT(\psi_t)\le BBT(f)$.
\end{lemma}

\begin{proof}
Let $x,y\in T_t$, and let $z\in [x,y]$. Let $x_0,y_0$ be $\phi_t$-preimages of $x,y$ in $T_0$. Then $\phi_t([x_0,y_0])$ contains $[x,y]$, so we can find a $\phi_t$-preimage $z_0$ of $z$ in the segment $[x_0,y_0]$. We then have $d_{T_1}(\psi_t(z),[\psi_t(x),\psi_t(y)])=d_{T_1}(f(z_0),[f(x_0),f(y_0)])\le BBT(f)$, which shows that $BBT(\psi_t)\le BBT(f)$.
\end{proof}

\begin{lemma}\label{id-time}
Assume that $f$ is isometric when restricted to any arc of $T_0$ with nontrivial stabilizer. Then for every $t\in [0,1]$, the map $\psi_t:T_t\to T_1$ is isometric when restricted to any arc of $T_t$ with nontrivial stabilizer.
\end{lemma}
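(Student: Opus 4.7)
The plan is a proof by contradiction. Suppose that $\psi_t$ fails to be isometric on some arc $J\subseteq T_t$ pointwise fixed by a nontrivial element $g\in F_N$. Since $\psi_t$ is an optimal morphism, hence locally injective and piecewise isometric on arcs, the failure of isometry on $J$ yields two distinct points $\bar a,\bar b\in J$ with $\psi_t(\bar a)=\psi_t(\bar b)$. Using the optimality of $\phi_t$ (which makes $\phi_t$ locally injective and thus locally a tree-isometry on arcs), I would lift the subarc $[\bar a,\bar b]\subseteq J$ isometrically to an arc $[\tilde a,\tilde b]\subseteq T_0$ on which $\phi_t$ restricts to an isometry. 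Composing with $\psi_t$ gives $f(\tilde a)=f(\tilde b)$ with $\tilde a\neq\tilde b$, so $f|_{[\tilde a,\tilde b]}$ is not isometric.

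The strategy is then to arrange so that $[\tilde a,\tilde b]$ lies in $F_0:=\mathrm{Fix}_g(T_0)$: this arc would then have nontrivial stabilizer (containing $g$), and the hypothesis would directly contradict $f(\tilde a)=f(\tilde b)$. When $g$ is elliptic in $T_0$, the hypothesis implies that $f|_{F_0}$ is an isometric embedding (each arc of $F_0$ is pointwise fixed by $g$, so has nontrivial stabilizer), and this forces $\phi_t|_{F_0}$ to also be an isometric embedding. The remaining step is to find preimages of $\bar a,\bar b$ inside $F_0$. From $g\cdot\bar a=\bar a$ one gets $\tilde a\sim_t g\tilde a$ and hence $\tau(\tilde a,g\tilde a)\leq t$, so the arc $[\tilde a,g\tilde a]$ passes through the common projection $p_a\in F_0$ of $\tilde a$ and $g\tilde a$. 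Analyzing the looping behavior of $f$ on $[\tilde a,g\tilde a]$ -- it starts and ends at $f(\tilde a)\in\mathrm{Fix}_g(T_1)$ and must therefore retrace itself inside $T_1$ -- together with the isometry of $f|_{F_0}$ should yield $f(p_a)=f(\tilde a)$, producing a preimage of $\bar a$ in $F_0$ as required.

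The main obstacle is the case where $g$ is hyperbolic in $T_0$ with axis $A_g$ and becomes elliptic in $T_t$ only through folding of the axis. Here $F_0=\emptyset$ and the hypothesis is vacuous on arcs of $A_g$, so the previous reduction is unavailable. Since $J\subseteq\phi_t(A_g)$ in this case, the arc $[\bar a,\bar b]$ lifts to an arc on $A_g$, and one has to exploit the tension between $\tau(\tilde a,g\tilde a)\leq t$ and $\tau(\tilde a,\tilde b)>t$, together with the $g$-equivariance of $f|_{A_g}$ and the ellipticity of $g$ in $T_1$ (which forces $f(A_g)$ to fold back around $\mathrm{Fix}_g(T_1)$). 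The aim is to produce an arc of $T_0$ with nontrivial stabilizer -- perhaps via an iterate $g^k$ whose translates bring two $\sim_t$-equivalent subarcs into a common convex hull that does carry a nontrivial stabilizer -- on which $f$ fails to be isometric, contradicting the hypothesis. This is the technical heart of the argument, and where I expect the main difficulty.
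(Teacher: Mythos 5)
Your proposal diverges significantly from the paper's argument, and it contains genuine gaps that your own phrasing acknowledges. The paper does not try to lift $[\bar a,\bar b]$ isometrically to $T_0$ nor to manufacture an arc of $T_0$ with nontrivial stabilizer on which $f$ fails to be isometric. Instead, it takes preimages $a,b\in T_0$ of $a_t,b_t$ and observes that $c$ fixing $[a_t,b_t]$ forces $a\sim_t ca$ and $b\sim_t cb$, hence $f(a)=f(ca)$, $f(b)=f(cb)$, $\tau(a,ca)\le t$, $\tau(b,cb)\le t$. Assuming $\psi_t(a_t)=\psi_t(b_t)$, i.e. $f(a)=f(b)$, the goal is to show $\tau(a,b)\le t$, which then forces $a_t=b_t$, a contradiction. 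The crux is that $[a,b]$ is covered by the $c^k$-translates of $[a,ca]$ and $[b,cb]$ (using $c$-equivariance of $f$ and that $c$ fixes $f(a)$ in $T_1$), giving $\tau(a,b)\le\max\{\tau(a,ca),\tau(b,cb)\}$ directly.

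This is where the gap in your plan is sharpest: in the case where $c$ does \emph{not} fix an arc in $T_0$ (your "hyperbolic obstacle"), the paper needs the hypothesis on $f$ \emph{not at all}; the covering-by-translates argument alone yields $\tau(a,b)\le t$. You flagged this case as "the technical heart" precisely because your strategy of producing a fixed arc in $T_0$ is unavailable there, but the point is that no such arc is needed. The hypothesis only enters in the second case, where $c$ already fixes a nondegenerate arc $[a',b']\subseteq T_0$: letting $a'',b''$ be the projections of $a,b$ onto $[a',b']$, the isometry of $f$ on $[a',b']$ gives $f([a'',b''])\subseteq f([a,a''])\cup f([b,b''])$, and then $[a,a'']\subseteq[a,ca]$ and $[b,b'']\subseteq[b,cb]$ again yield $\tau(a,b)\le t$. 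Your plan for the elliptic case also rests on the unjustified claim $f(p_a)=f(\tilde a)$: this is generally false, since $p_a$ is the far point of the loop $f([\tilde a, g\tilde a])$, which sits at distance up to $\tau(\tilde a,g\tilde a)$ from $f(\tilde a)$, not at distance zero. Finally, your first step — lifting $[\bar a,\bar b]$ to an arc of $T_0$ on which $\phi_t$ is an isometry — is itself not clearly available (preimages of arcs under morphisms need not contain isometric lifts) and, as the paper shows, unnecessary.
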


\begin{proof}
Let $[a_t,b_t]\subseteq T_t$ be a nondegenerate arc with nontrivial stabilizer $\langle g\rangle$. We aim to show that $\psi_t(a_t)\neq \psi_t(b_t)$, which is enough to conclude since $\psi_t$ is a morphism. By definition of $T_t$, there exist $a,b\in T_0$ satisfying $\tau(a,ga)\le t$ and
$\tau(b,gb)\le t$, with $f(ga)=f(a)$ and $f(gb)=f(b)$, such that $a_t=\phi_t(a)$ and $b_t=\phi_t(b)$.

Assume towards a contradiction that $\psi_t(a_t)=\psi_t(b_t)$. Then $f(a)=f(b)$. If $g$ does not fix any nondegenerate arc in $T_0$, then the segment $[a,b]$ is contained in the union of all $g^k$-translates of $[a,ga]$ and $[b,gb]$, with $k$ varying over $\mathbb{Z}$. It follows that $\tau(a,b)\le\max\{\tau(a,ga),\tau(b,gb)\}\le t$, and hence $a_t=b_t$, a contradiction. Assume now that $g$ fixes a nondegenerate arc $[a',b']\subseteq T_0$, and let $a''$ (resp. $b''$) be the projection of $a$ (resp. $b$) to $[a',b']$. Using the fact that $f(a)=f(b)$ and that $f$ is isometric when restricted to $[a',b']$, we have $f([a'',b''])\subseteq f([a,a''])\cup f([b,b''])$, and therefore we get that $\tau(a,b)=\max\{\sup_{x\in [a,a'']}d_{T_1}(f(a),f(x)),\sup_{y\in [b,b'']}d_{T_1}(f(b),f(y))\}$. Since $[a,a'']\subseteq [a,ga]$ and $[b,b'']\subseteq [b,gb]$, we then obtain as above that $\tau(a,b)\le t$, so again $a_t=b_t$, a contradiction. 
\end{proof}

\begin{rk}\label{rk}
Together with \cite[Proposition 4.4]{Hor14-3}, which says that arc stabilizers in the intermediate trees are root-closed if arc stabilizers are root-closed in $T_0$ and $T_1$, Lemma~\ref{id-time} implies that if $T_0,T_1\in\overline{cv_N}$, and if $f$ is isometric when restricted to arcs with nontrivial stabilizer, then all intermediate trees belong to $\overline{cv_N}$. It is also known \cite[Proposition~3.6]{GL07} that if $T_0,T_1\in cv_N$, then all intermediate trees belong to $cv_N$.
\end{rk}

\subsection{Local contractibility at trees with trivial arc stabilizers}\label{sec-triv}

The goal of the present section is to prove that $\overline{cv_N}$ is locally contractible at every tree with all arc stabilizers trivial (Proposition~\ref{triv}). 
The following lemma provides nice approximations of trees in $\overline{cv_N}$ with trivial arc stabilizers.

\begin{lemma}(\cite[Theorem 5.3]{Hor14-2})\label{Lipschitz-approx}
Given a tree $T\in\overline{cv_N}$ with all arc stabilizers trivial, and any open neighborhood $\calu$ of $T$ in $\overline{cv_N}$, there exists a tree $U\in\calu\cap cv_N$ that admits an optimal morphism onto $T$.
\end{lemma}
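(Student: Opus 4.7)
The plan is to construct an explicit free simplicial $F_N$-tree $U\in cv_N$ together with an optimal morphism $f:U\to T$, and to arrange that $U$ lies in the prescribed neighborhood $\calu$ by choosing the construction sufficiently fine.

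Unpack $\calu$ so it contains a basic open set $\mathcal{N}_T(K_0,X,\eps)$ with $K_0\subseteq T$ finite, $X\subseteq F_N$ finite containing a free basis $\{a_1,\dots,a_N\}$, and $\eps>0$. Fix $p\in T$, and for an integer $R\ge 1$ let $K_R\subseteq T$ be the convex hull of the orbit $B_R\cdot p$, where $B_R\subseteq F_N$ is the ball of radius $R$ with respect to the chosen basis. The candidate tree $U_R$ is the Bass--Serre tree of a graph of actions whose underlying splitting is the rose with $N$ petals, whose vertex tree is $K_R$, and whose edge attaching maps are determined by the already-present identifications $a_i\cdot p\in K_R\subseteq T$. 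Concretely, $U_R$ is obtained by gluing $F_N$-translates of $K_R$ at matching boundary points, so that the natural map $f:U_R\to T$ is $F_N$-equivariant and isometric on each translated copy of $K_R$.

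Next I would verify that $(U_R,f)$ has the required properties. The action of $F_N$ on $U_R$ is simplicial and free: freeness uses the hypothesis that arc stabilizers in $T$ are trivial, since any element fixing an arc in $U_R$ would, via $f$, fix an arc in $T$. The map $f$ is isometric on each translated copy of $K_R$, so it is a morphism; by mildly subdividing $K_R$ so that no two edges of $K_R$ adjacent in $U_R$ share an initial germ in $T$, one arranges that $f$ is optimal.

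The main obstacle is the closeness estimate $U_R\in\calu$. Since $f$ is a morphism, $d_{U_R}(\tilde x,g\tilde y)\ge d_T(x,gy)$ for any lifts $\tilde x,\tilde y\in U_R$ of $x,y\in K_0$. Equality holds precisely when the $T$-geodesic from $x$ to $gy$ is covered bijectively by a geodesic in $U_R$, which happens as soon as $[x,gy]$ is contained in a single $F_N$-translate of $K_R$. Since the $F_N$-action on $T$ is minimal and $\bigcup_{x,y\in K_0,\,g\in X}[x,gy]$ is a compact subset of $T$, there exists $R$ large enough that this containment holds for every relevant pair. For such $R$, lifting $K_0$ into the identity translate of $K_R$ inside $U_R$ yields a bijection $K_0\leftrightarrow K_0'\subseteq U_R$ witnessing $U_R\in\mathcal{N}_T(K_0,X,\eps)\subseteq\calu$. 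The technical core of the argument, matching \cite[Theorem 6.3]{Hor14-1}, is keeping track of how the $F_N$-translates of $K_R$ are glued in $U_R$ and verifying that geodesics in $U_R$ faithfully track geodesics in $T$ without detours through the gluing pattern.
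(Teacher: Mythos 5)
The construction you propose does not approximate $T$ in the equivariant Gromov--Hausdorff topology, and the error lies in the closeness estimate. When you glue $F_N$-translates of $K_R$ at the single points $\{ga_i p\}$, the resulting tree $U_R$ is a ``Cayley-graph blow-up'': to travel from the identity copy $K_R^e$ to the $g$-copy $K_R^g$, a geodesic in $U_R$ is forced to pass through every intermediate gluing point $a_{i_1}p,\ a_{i_1}a_{i_2}p,\ \dots,\ gp$ (writing $g=a_{i_1}\cdots a_{i_k}$ in reduced form). Concretely, with $x'=y'=p'\in K_R^e$ and $g=a_1a_2$,
$$d_{U_R}(p',\,gp')=d_T(p,a_1p)+d_T(p,a_2p),$$
which strictly exceeds $d_T(p,a_1a_2p)$ unless $a_1p$ happens to lie on the $T$-geodesic $[p,a_1a_2p]$ --- a condition that has no reason to hold for a generic $T$ (e.g.\ dual to a filling lamination) and which does not improve as $R\to\infty$, since the discrepancy is a fixed quantity depending only on the positions of $p$, $a_1p$, $a_2p$, $a_1a_2p$ in $T$. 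The flaw is in the sentence ``\dots\ which happens as soon as $[x,gy]$ is contained in a single $F_N$-translate of $K_R$'': even if $[x,gy]\subseteq K_R$, the point $gy'$ is \emph{not} the lift of $gy$ inside $K_R^e$ --- it lies in $K_R^g$, and the $U_R$-geodesic $[x',gy']$ leaves $K_R^e$ after the first gluing point. Secondary issues (you cannot fix optimality merely by subdividing; you must perturb attaching points) are minor by comparison, but the main closeness claim is unrecoverable with this gluing pattern: identifying copies only along single orbit points systematically overcounts distances.

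The paper's actual proof takes a fundamentally different route. It invokes the machinery of Proposition \ref{approx}, which in turn relies on the theory of geometric trees (approximating $T$ by a tree dual to a measured foliation on a $2$-complex, then running the Rips machine and cutting transversally to the foliation, following \cite{LP97,BF94,Gui98}). That produces a simplicial tree $U'$ admitting an optimal morphism onto $T$, but $U'$ may have nontrivial vertex stabilizers; since $T$ has trivial arc stabilizers, so does $U'$, and the paper then wedges a small free action onto each fixed vertex (equivariantly, at nearby points so as to preserve the morphism property) to land in $cv_N$. The genuine geometric content --- capturing how the $F_N$-translates of a compact piece of $T$ overlap along subtrees, rather than at isolated points --- is precisely what the Rips-machine approximation handles and what the Cayley-graph blow-up misses.
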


\begin{proof}
It follows from
\cite[Theorem~3.6]{Hor14-1} that we can always find a simplicial tree
$U'\in\calu\cap\overline{cv_N}$ that admits an optimal morphism onto
$T$ (optimality is not stated in \cite{Hor14-1}, however it follows
from the construction which essentially relies on the approximation
techniques from \cite{Gui98}, see also Proposition~\ref{approx} for an
argument).
 
However, the $F_N$-action on $U'$ may \emph{a priori} not
be free (but $U'$ has trivial arc stabilizers because $T$ has trivial arc stabilizers). One way to replace $U'$ by a tree in $cv_N$ is to replace any vertex $v$ with nontrivial stabilizer $G_v$ by a free $G_v$-tree wedged at the point $v$, and
perform this operation equivariantly and at all nontrivial fixed vertices; however, the
natural map $U'\to U$ will collapse these trees and will not be a
morphism. Instead, we define a tree $U'_\epsilon$ in the following way (see Figure~\ref{fig-lipap}). 

Let $e_1,\dots,e_k$ be a choice of representatives of the orbits of edges incident on $v$ in $U'$, made such that for all $i\neq j$, the turn $\{e_i,e_j\}$ is $f$-legal, i.e.\ no initial segment of $e_i$ is identified with an initial segment of $e_j$ by $f$: this is possible because the tree $T$ has trivial arc stabilizers, so two edges in the same $G_v$-orbit never get identified by $f$. For all $i\in\{1,\dots,k\}$, we denote by $\ell(e_i)$ the length of the edge $e_i$. Let $\{a_1,\dots,a_l\}$ be a free basis of $G_v$. 

Let $U'_\epsilon$ be the tree obtained from $U'$ by giving length $\ell(e_1)-\epsilon$ to $e_1$, and blowing up the vertex $v$ to a free $G_v$-tree, as depicted in Figure~\ref{fig-lipap} (where we have represented the quotient graph $U'_\epsilon/F_N$). Then there is an optimal morphism $g:U'_\epsilon\to U'$, which is an isometry in restriction to each complementary component of the blown-up edges. We need to show that the composition $g\circ f$ is again optimal. Optimality at any point $x$ distinct from $w$ (with the notation from the picture) follows from the optimality of $g$. Optimality at $w$ follows from the fact that $a_1$ does not fix any nondegenerate arc in $T$. 
\end{proof}

\begin{figure}
\begin{center}
\input{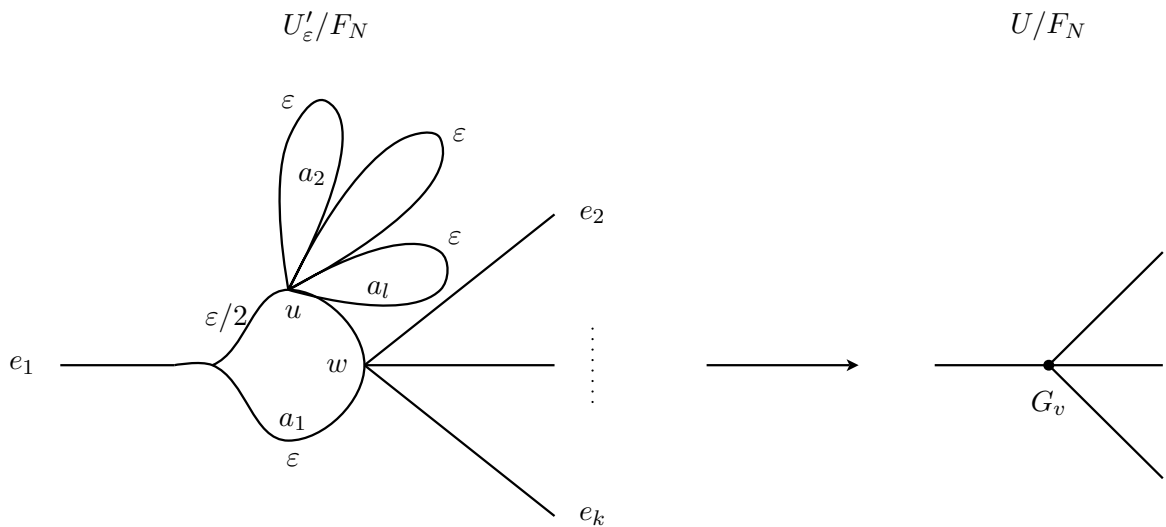}
\caption{The situation in the proof of Lemma~\ref{Lipschitz-approx}.}
\label{fig-lipap}
\end{center}
\end{figure}

\begin{lemma}\label{optimal-map}
Let $f:S\to T$ be an optimal morphism from a tree $S\in cv_N$ to a
tree $T\in \overline{cv_N}$. Then there is a neighborhood $\calw$ of
$T$ and a continuous
map $$\Psi_S:\calw\to\text{Opt}(\overline{cv_N})$$ such that for all
$W\in\calw$, the source of $\Psi_S(W)$ is a tree $S'\in cv_N$, in the
same (cone on a) simplex as $S$ and varying continuously, the range of
$\Psi_S(W)$ is the tree $W$, and $\Psi_S(W)$ is an optimal morphism.
In addition, $\Psi_S(T)=f$.
\end{lemma}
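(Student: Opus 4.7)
The plan is to encode the optimal morphism $f$ by a finite amount of combinatorial-metric data living in $T$, and then transport that data continuously to trees $W$ near $T$. First, fix a finite fundamental domain $E_0=\{e_1,\dots,e_n\}$ for the $F_N$-action on the edges of $S$. Since $f$ is a morphism, each $e\in E_0$ admits a subdivision $e=[p_{e,0},p_{e,1}]\cup\cdots\cup[p_{e,k_e-1},p_{e,k_e}]$ on which $f$ is a piecewise isometric embedding. The images $q_{e,j}:=f(p_{e,j})\in T$, together with the identifications among endpoints (some $p_{e,0}$ and $p_{e',k_{e'}}$ may be $F_N$-equivalent) and the combinatorial ``direction'' data for each subsegment, determine $f$ completely; only finitely many of the $q_{e,j}$ are needed, the rest being obtained as $F_N$-translates.

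The core step is to lift this data continuously to nearby $W$. Using that trees in $\overline{cv_N}$ are determined by their translation length functions (Culler--Morgan), one can find a neighborhood $\calw$ of $T$ and a continuous selection $W\mapsto (\tilde q_{e,j}(W))$ of points in $W$, $F_N$-equivariantly compatible with the identifications among the $q_{e,j}$, satisfying $\tilde q_{e,j}(T)=q_{e,j}$, and such that each distance $d_W(\tilde q_{e,j}(W),g\cdot\tilde q_{e',j'}(W))$ depends continuously on $W$ for every $g\in F_N$. Define $S'$ to have the same underlying marked simplicial structure as $S$, with new edge lengths
\[
\ell'_e(W):=\sum_{j=1}^{k_e} d_W\bigl(\tilde q_{e,j-1}(W),\tilde q_{e,j}(W)\bigr),
\]
which stay positive (and close to $\ell_e$) for $W$ close to $T$; so $S'\in cv_N$ lies in the same cone on a simplex as $S$ and depends continuously on $W$. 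Define $\Psi_S(W):S'\to W$ on each $e\in E_0$ by subdividing $e$ in $S'$ into subsegments of lengths $d_W(\tilde q_{e,j-1}(W),\tilde q_{e,j}(W))$, mapping each isometrically onto $[\tilde q_{e,j-1}(W),\tilde q_{e,j}(W)]\subset W$, and extending $F_N$-equivariantly. By construction $\Psi_S(T)=f$ and $\Psi_S$ is continuous.

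To finish, I would check that $\Psi_S(W):S'\to W$ is an optimal morphism. It is a morphism by construction, being isometric on each subsegment. For optimality, one must produce, at every $x\in S'$, a small open arc around $x$ on which $\Psi_S(W)$ is injective; this is automatic at interior points of subsegments, and at subdivision points or vertices it follows from the optimality of $f$ at the corresponding point in $S$ combined with continuity of the outgoing directions, after shrinking $\calw$ if necessary so that distinct directions in $T$ remain distinct in $W$. The hard part of the plan is the continuous selection step: tracking finitely many marked points in an $\bbR$-tree under perturbation in the equivariant Gromov--Hausdorff topology. This is possible because a point in such a tree is essentially pinned down by its distances to finitely many orbits of basepoints, and these distances vary continuously in $\overline{cv_N}$; but some care is needed to handle points lying at branch points or with coincident distances, where naive approximations could depend discontinuously on $W$.
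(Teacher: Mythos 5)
Your overall strategy matches the paper's at a high level: encode $f$ by finitely many marked points, transport them continuously to trees $W$ near $T$, and read off a new metric on $S$ and a morphism $\Psi_S(W)$. But the step you flag at the end as the ``hard part'' --- producing a \emph{continuous selection} $W\mapsto \tilde q_{e,j}(W)$ --- is not merely a technicality to be handled with care; it is the mathematical content of the lemma, and your sketch does not actually solve it. Your suggestion that a point is ``pinned down by its distances to finitely many orbits of basepoints'' begs the question, since an $\mathbb{R}$-tree in $\overline{cv_N}$ has no canonical basepoints, and naive approximation schemes (e.g.\ intersecting metric spheres around moving reference points) can indeed jump discontinuously at branch points, exactly as you worry.

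The paper resolves this with a concrete device that you are missing, and it is worth isolating: for each orbit representative $v$ of a vertex of $S$, optimality of $f$ supplies a line $l\subseteq S$ through $v$ on which $f$ is isometric, and one then chooses two hyperbolic elements $\gamma_{1,v},\gamma_{2,v}\in F_N$ whose axes in $T$ are disjoint, both cross $f(l)$, and bracket $f(v)$ in the interior of the bridge between them. The point $x^v_W\in W$ is then defined as the point on the bridge between $\mathrm{Ax}_W(\gamma_{1,v})$ and $\mathrm{Ax}_W(\gamma_{2,v})$ at the prescribed distance $d=d_T(\mathrm{Ax}_T(\gamma_{1,v}),f(v))$ from the first axis. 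This varies continuously in the equivariant Gromov--Hausdorff topology because axes and bridges of fixed hyperbolic elements do, and it specializes to $f(v)$ at $W=T$. The same pair $\gamma_{1,v},\gamma_{2,v}$ also delivers optimality of $\Psi_S(W)$ for $W$ close to $T$: the bridge in $W$ overlaps $[g_W(v_1),g_W(v_2)]$ in a segment containing $g_W(v)$ in its interior, so the two directions at $v$ along $l$ map to distinct directions in $W$. Note also that the paper only tracks vertices of $S$ (not interior subdivision points) and takes the unique metric on $S$ making the linear extension a morphism; your extra bookkeeping of the $q_{e,j}$ is harmless but unnecessary once the vertex images are controlled. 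In short, your proposal has the right shape but is incomplete precisely where a new idea is required; the missing ingredient is the use of pairs of hyperbolic elements to continuously locate points via bridges between their axes, which in turn uses optimality of $f$ to guarantee the existence of a suitable line $l$ through each vertex.
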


\begin{proof}
Let $v$ be a vertex of $S$. Since $f$ is optimal, the point $v$
belongs to a line $l\subseteq S$ such that the restriction $f_{|l}$ is
an isometry (notice however that we may not assume in general that $l$
is the axis of an element of $F_N$). Denote by $v_1$ and $v_2$ the two
vertices of $S$ which are adjacent to $v$ on the line $l$. We can then
find two hyperbolic elements $\gamma_{1,v},\gamma_{2,v}\in F_N$ whose
axes in $T$ both intersect $f(l)$ but do not intersect each other, and
such that the segment joining $Ax_T(\gamma_{1,v})$ to
$Ax_T(\gamma_{2,v})$ contains $f(v_1),f(v)$ and $f(v_2)$ in its
interior. Let $d\in\R$ denote the distance from $Ax_T(\gamma_{1,v})$
to $f(v)$.

If $W$ is sufficiently close to $T$, the elements $\gamma_{1,v}$ and
$\gamma_{2,v}$ are hyperbolic in $W$ and their axes are disjoint and lie at distance at least $d$ from each
other. We denote by $x^v_W$ the point at distance $d$ from
$Ax_W(\gamma_{1,v})$ on the segment from $Ax_W(\gamma_{1,v})$ to
$Ax_W(\gamma_{2,v})$. Given a choice $v_1,\dots,v_k$ of
representatives of the orbits of the vertices of $S$, there is a
unique choice of a (new) metric on $S$, giving a tree $S_W$, so that
the linear extension of $g_W$ (defined on vertices by sending
$v_i$ to $x^{v_i}_W$, and extending equivariantly) is a morphism
(up to restricting to a smaller neighborhood of $T$, we can assume that no edge gets length 0). Using the fact that
$f_{|l}$ is an isometry, we get that this morphism is also
optimal: indeed, the segment joining $Ax_T(\gamma_{1,v})$ to
$Ax_T(\gamma_{2,v})$ contains $[f(v_1),f(v_2)]$. It follows that when
$W$ is close to $T$ the segment joining $Ax_W(\gamma_{1,v})$ to
$Ax_W(\gamma_{2,v})$ overlaps $[g_W(v_1),g_W(v_2)]$ in a segment that
contains $g_W(v)$ in its interior. In particular, $g_W$ sends the two
directions at $v$ determined by $l$ to distinct directions.
Thus we set $\Psi_S(W)=g_W$. It is standard
that $\Psi_S$ is continuous, see \cite{GL07-2} for example.
\end{proof}

Given $T_0\in\overline{cv_N}$, the following corollary enables us to choose
basepoints continuously in all trees in a neighborhood of $T_0$, with
a prescribed choice on $T_0$. In the statement, we fix a Cayley tree
$R$ of $F_N$ with respect to a free basis of $F_N$, and a vertex
$\ast\in R$, and we denote by $\text{Map}(F_N,\overline{cv_N})$ the
collection of all $F_N$-equivariant maps from a tree obtained from $R$
by possibly varying edge lengths, to trees in $\overline{cv_N}$. 

\begin{cor}\label{basepoint}
  \begin{enumerate}[(i)]
    \item Let $c,c'\in F_N$ be two nontrivial elements that do not
      belong to the same cyclic subgroup. Then the
      function
      $$b:\overline{cv_N}\to \text{Map}(F_N,\overline{cv_N})$$ that
      sends $T\in \overline{cv_N}$ to the morphism $R\to T$ that sends
      the basepoint in $R$ to the projection of $Char_T(c)$ to
      $Char_T(c')$ when the two are disjoint, and to the midpoint of
      the overlap when they intersect, is continuous.
      \item 
Let $N\ge 2$, let $T_0\in\widehat{cv_N}$, let $A\subseteq F_N$ be a free factor of $F_N$, and let
$x_0\in T_0$ be a point which is contained in the union of all characteristic sets of elements of $A$.
\\ There exists a continuous map
$b:\overline{cv_N}\to\text{Map}(F_N,\overline{cv_N})$ such that for all
$T\in\overline{cv_N}$, the range of $b(T)$ is $T$, and  
$b(T_0)(\ast)=x_0$, and $b(T)(\ast)$ is contained in the union of all characteristic sets of elements of $A$.
\end{enumerate}
\end{cor}

\begin{proof} The first part of the corollary was proved in \cite[page 166]{GL07-2}. We prove (ii). From (i), we get a global choice of basepoints
  $b_1$ contained in the union of all characteristic sets of elements of $A$. From Lemma~\ref{optimal-map}, we have a choice of basepoints $b'_2$ defined in a
  neighborhood $\mathcal W$ of $T_0$, with $b'_2(T_0)(\ast)=x_0$. Let $a\in A$ be an element whose characteristic set contains $x_0$. By projecting $b'_2$ to the characteristic set of $a$, we get a continuous choice of basepoints $b_2$ in $\mathcal{W}$, all contained in the union of all characteristic sets of elements of $A$. Choose a continuous function
  $\phi:\overline{cv_N}\to [0,1]$ which is 1 at $T_0$ and is 0 outside
  a compact subset of $\mathcal W$. Then define
  $$b(T)(\ast)=(1-\phi(T))~ b_1(T)(\ast)+\phi(T)~ b_2(T)(\ast).$$
\end{proof}

\begin{lemma}\label{fold}
Let $T\in\overline{cv_N}$, let $\calu$ be an open neighborhood of $T$
in $\overline{cv_N}$. Then there exist $\epsilon>0$ and an open
neighborhood $\calv\subseteq\calu$ of $T$ in $\overline{cv_N}$ such
that if $U\in\calv$ is a tree that admits a $(1+\epsilon)$-Lipschitz
$F_N$-equivariant map $f$ onto $T$, and if $U'\in\overline{cv_N}$ is a
tree such that $f$ factors through $(1+\epsilon)$-Lipschitz
$F_N$-equivariant maps from $U$ to $U'$ and from $U'$ to $T$, then
$U'\in\calu$.
\end{lemma}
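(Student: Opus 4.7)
The plan is to reduce to basic neighborhoods in the equivariant Gromov–Hausdorff topology on $\overline{cv_N}$ (equivalently, via Paulin's theorem, to basic neighborhoods in the weak topology on translation length functions) and then to exploit the two-sided sandwich on $\ell_{U'}$ furnished by the Lipschitz factorization.

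First I would assume, without loss of generality, that $\calu$ contains a basic length-function neighborhood of the form
\[
\calu_0 \;=\; \{T''\in\overline{cv_N} : |\ell_{T''}(g)-\ell_T(g)|<\delta \text{ for all } g\in X\},
\]
for some finite $X\subseteq F_N$ and $\delta>0$. Then I would let $\calv\subseteq\calu_0$ be the analogous neighborhood with the same $X$ but a smaller tolerance $\delta'>0$ to be specified. For any $U\in\calv$ admitting a $(1+\epsilon)$-Lipschitz $F_N$-equivariant surjection $f:U\to T$ that factors as $f_2\circ f_1:U\to U'\to T$ with $f_1,f_2$ both $(1+\epsilon)$-Lipschitz, equivariance of each map yields, for every $g\in F_N$,
\[
\ell_{U'}(g) \;\le\; (1+\epsilon)\,\ell_U(g) \qquad\text{and}\qquad \ell_T(g) \;\le\; (1+\epsilon)\,\ell_{U'}(g).
\]
Combined with $|\ell_U(g)-\ell_T(g)|<\delta'$ for $g\in X$, this squeezes $\ell_{U'}(g)$ into the interval $[\ell_T(g)/(1+\epsilon),\,(1+\epsilon)(\ell_T(g)+\delta')]$.

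Next I would choose $\delta'$ small enough that this sandwich forces $|\ell_{U'}(g)-\ell_T(g)|<\delta$ for every $g\in X$, which reduces to the two conditions $\epsilon\,\ell_T(g)+(1+\epsilon)\delta'<\delta$ and $\epsilon\,\ell_T(g)/(1+\epsilon)<\delta$. Both are achievable by selecting $\delta'$ sufficiently small, provided $\delta$ is already large enough compared to $\epsilon\max_{g\in X}\ell_T(g)$; in practice one first refines $\calu$ (inside the given open neighborhood) to a basic neighborhood that is wide enough in the length-function topology for the prescribed $\epsilon$, then defines $\calv$ inside it.

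The main obstacle is precisely this coupling between the Lipschitz constant $1+\epsilon$ and the coarseness of $\calu$: the sandwich only constrains $U'$ up to a multiplicative factor of $1+\epsilon$ on each translation length, so $\calu$ must accommodate a band of that width in the length-function topology. Once this is arranged, Paulin's identification of the equivariant Gromov–Hausdorff topology on $\overline{cv_N}$ with the weak topology on length functions translates the estimate back to a Gromov–Hausdorff estimate and yields $U'\in\calu$.
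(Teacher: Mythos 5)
Your route is the same as the paper's: pass to a basic length-function neighborhood of $T$ and use the two Lipschitz factors to sandwich $\ell_{U'}(g)$ between $\ell_T(g)/(1+\epsilon)$ and $(1+\epsilon)\ell_U(g)\le(1+\epsilon)(1+\delta')\ell_T(g)$. You are also right to flag that this band has multiplicative width of order $1+\epsilon$ and does \emph{not} tighten as $\delta'\to 0$; this is the genuinely delicate point, and the paper's own closing sentence (``by choosing $\calv$ sufficiently small\dots'') rests on the same estimate and has the same limitation.

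Where your write-up goes astray is the proposed remedy. You suggest first ``refining $\calu$\dots to a basic neighborhood that is wide enough in the length-function topology for the prescribed $\epsilon$.'' That is backwards: $\calu$ is given, and any basic neighborhood you pass to inside $\calu$ has tolerance \emph{at most} what $\calu$ already permits, so you cannot enlarge it. What is actually required is that $\epsilon$ be small enough relative to $\calu$---concretely, that for some finite $X\subset F_N$ defining a basic neighborhood inside $\calu$, the band $\{T'' : \ell_T(g)/(1+\epsilon)\le\ell_{T''}(g)\le(1+\epsilon)\ell_T(g),\ g\in X\}$ already lies inside $\calu$. That is the reading under which the lemma is used in Proposition~\ref{triv}, where $\epsilon$ is a free parameter that may be taken as small as needed before Lemma~\ref{fold} is invoked. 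Taken literally for arbitrary $\epsilon$ the statement can fail: for $T$ a free simplicial tree, $U=T$, $U'$ the same tree with the metric rescaled by $1/(1+\epsilon)$, and $f$ the identity on $T$ factored through the two rescaling maps, one has $U\in\calv$ for every open $\calv\ni T$ but $U'\notin\calu$ once $\calu$ is small. So your computation is correct and you spotted the subtlety the paper leaves implicit; the fix is to constrain $\epsilon$, not to widen $\calu$.
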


\begin{proof}
By definition of the equivariant Gromov--Hausdorff topology, there exist $\delta\in (0,1)$ and a finite set $\{g_1,\dots,g_k\}$ of elements of $F_N$ such that $\calu$ contains $$\calu':=\{T'\in\overline{cv_N}|(1-\delta)||g_i||_T\le ||g_i||_{T'}\le (1+\delta)||g_i||_T\text{~for all~}i\in\{1,\dots,k\}\}.$$ Let $\epsilon>0$ be such that $\epsilon<\min\{\delta,\frac{1}{1-\delta}-1\}$, and let $\delta'>0$ be such that $\delta'<\frac{1+\delta}{1+\epsilon}-1$ (this exists because $\epsilon<\delta$). Notice in particular that we have 
\begin{equation}\label{eq1}
1-\delta<\frac{1}{1+\epsilon}
\end{equation}
and 
\begin{equation}\label{eq2}
(1+\epsilon)(1+\delta')<1+\delta.
\end{equation}
Let $\calv$ be an open neighborhood of $T$ in $\overline{cv_N}$ contained in $$\{U\in\overline{cv_N}|(1-\delta')||g_i||_T\le ||g_i||_{U}\le (1+\delta')||g_i||_T\text{~for all~}i\in\{1,\dots,k\}\}.$$ If $U\in\calv$ and $U'\in\overline{cv_N}$ are trees such that $f$ factors through $(1+\epsilon)$-Lipschitz $F_N$-equivariant maps from $U$ to $U'$ and from $U'$ to $T$, then for all $i\in\{1,\dots,k\}$, we have $$||g_i||_T\le (1+\epsilon)||g_i||_{U'}\le (1+\epsilon)^2||g_i||_U\le (1+\epsilon)^2(1+\delta')||g_i||_T.$$ Using Equations~\eqref{eq1} and~\eqref{eq2}, this implies that $$(1-\delta)||g_i||_T\le ||g_i||_{U'}\le (1+\delta)||g_i||_T,$$ so $U'\in\calu$.  
\end{proof}

\begin{prop}\label{triv}
The space $\overline{cv_N}$ is locally contractible at every tree with all arc stabilizers trivial.
\end{prop}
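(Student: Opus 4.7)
The plan is to construct an explicit nullhomotopy of the inclusion $\calv\hookrightarrow\calu$ by folding $W$ backward to an approximation $S_W\in cv_N$ of $T$, interpolating inside a fixed simplex to a basepoint $S\in cv_N$, and then folding $S$ forward to $T$ along a single fixed optimal morphism.

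First, I apply Lemma~\ref{fold} to $\calu$ (with, say, $\epsilon=1$) to produce a smaller neighborhood $\calv_1\subseteq\calu$ of $T$ controlling intermediate trees along any $1$-Lipschitz factorization through $T$. Using Lemma~\ref{Lipschitz-approx}, I choose $S\in\calv_1\cap cv_N$ together with an optimal (hence $1$-Lipschitz) morphism $f:S\to T$. Applying Lemma~\ref{optimal-map} to this $S$ and $f$ yields a neighborhood $\calw$ of $T$ and a continuous family $W\mapsto\Psi_S(W):S_W\to W$ of optimal morphisms, with $S_W$ varying continuously in the open simplex of $S$ and with $\Psi_S(T)=f$, so in particular $S_T=S$.

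I then define a homotopy $\rho:\calw\times[0,1]\to\overline{cv_N}$ in three stages. On $t\in[0,1/3]$ (Stage A), I fold $W$ back to $S_W$ by setting $\rho(W,t):=H(\Psi_S(W),\,1-3t)$, where $H$ is the map of Proposition~\ref{factor}; this satisfies $\rho(W,0)=W$ and $\rho(W,1/3)=S_W$. On $t\in[1/3,2/3]$ (Stage B), I linearly interpolate the simplicial metric between $S_W$ and $S$ inside their common open simplex, ending at $\rho(W,2/3)=S$. On $t\in[2/3,1]$ (Stage C), I fold $S$ forward to $T$ by setting $\rho(W,t):=H(f,\,3t-2)$, which is independent of $W$ and gives $\rho(W,1)=T$. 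The three pieces match at $t=1/3,2/3$, and $\rho$ is continuous by continuity of $\Psi_S$, of $H$, and of the simplicial interpolation. Each tree along Stages A and C lies in $\overline{cv_N}$ by Remark~\ref{rk}, since the sources $S_W,S\in cv_N$ have trivial arc stabilizers and so the hypothesis of Lemma~\ref{id-time} is vacuous; the Stage B path lies in the open simplex of $S\subseteq cv_N$.

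It remains to check that, after shrinking, $\rho$ takes values in $\calu$. For $W=T$ both Stage A and Stage C fold along $f$ and Stage B is constant at $S$; since $f$ is $1$-Lipschitz and $S\in\calv_1$, Lemma~\ref{fold} ensures that $\rho(\{T\}\times[0,1])\subseteq\calu$. As this image is compact and $\calu$ is open in the metrizable space $\overline{cv_N}$, the tube lemma furnishes a neighborhood $\calv\subseteq\calw\cap\calu$ of $T$ with $\rho(\calv\times[0,1])\subseteq\calu$, yielding the desired nullhomotopy. The main obstacle I foresee is precisely this containment step: the whole scheme hinges on Lemma~\ref{fold} at the base point $T$, promoted to a neighborhood by continuity and compactness of $[0,1]$. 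The hypothesis that $T$ has trivial arc stabilizers enters twice, once to invoke Lemma~\ref{Lipschitz-approx} and provide the approximation $S$, and once through Remark~\ref{rk} to keep all intermediate folding trees inside $\overline{cv_N}$.
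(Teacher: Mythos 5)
Your proof is essentially the same as the paper's: both run the canonical folding path from $S_W$ to $W$ backwards, then interpolate $S_W$ to $S$ inside their common open simplex, relying on Lemmas~\ref{Lipschitz-approx}, \ref{optimal-map}, \ref{fold} and Remark~\ref{rk}. The only variations are cosmetic: your Stage~C is redundant (the homotopy is already constant in $W$ from $t=2/3$ onward, so it already contracts $\calw$ to the point $S$), and where the paper applies Lemma~\ref{fold} uniformly over $\calw$ after producing a $(1+\epsilon)$-Lipschitz map $S\to S_W$, you verify the containment in $\calu$ only along the single fiber $\{T\}\times[0,1]$ and then propagate it by compactness of $[0,1]$ and the tube lemma --- a valid alternative bookkeeping of the same estimate.
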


\begin{proof}
Let $T\in\overline{cv_N}$ be a tree with all arc stabilizers trivial,
and let $\calu$ be an open neighborhood of $T$ in
$\overline{cv_N}$. Let $\epsilon>0$ and $\calv\subseteq\calu$ be a smaller neighborhood of $T$, as provided by Lemma \ref{fold}. Let $S\in \calv\cap cv_N$ be such that there exists
an optimal morphism from $S$ to $T$ (this exists by
Lemma~\ref{Lipschitz-approx}). Then there exists a smaller neighborhood $\calw\subseteq\calv$ of $T$ such that for all
$T'\in\calw$, there is a $(1+\epsilon)$-Lipschitz $F_N$-equivariant
map from $S$ to the source $S'$ of the morphism $\Psi_{S}(T')$ given
by Lemma \ref{optimal-map}. Since morphisms are $1$-Lipschitz, in view of Lemma \ref{fold}, this implies that all
trees that belong to either the straight path from $S$ to $S'$, or to
the canonical folding path directed by $\Psi_S(T')$, belong to
$\calu$. As $\Psi_S(T')$ varies continuously with $T'$, this gives a
homotopy of $\calw$ onto $S$ that stays within $\calu$.
\end{proof} 

A variant of the above argument shows the following statement, which will be useful in our proof of the fact that $\overline{CV_N}$ is not an AR.

\begin{lemma}\label{homotopy-lam}
Let $T_0\in\overline{cv_N}$ be a tree with trivial arc stabilizers that is dual to a measured foliation on a surface $\Sigma$ with a single boundary component $c$. Let $Z\subseteq\overline{cv_N}$ be the set of all trees dual to a measured foliation on $\Sigma$. \\ Then for every open neighborhood $\calv$ of $T_0$ in $\overline{cv_N}$, there exist a smaller neighborhood $\calw\subseteq\calv$ of $T_0$ in $\overline{cv_N}$, a tree $S\in\calw\cap cv_N$, and a continuous map $H:(Z\cap\calw)\times [0,1]\to\calv$ such that $H(z,0)=z$ and $H(z,1)=S$ for all $z\in Z\cap\calw$, and $c$ is hyperbolic in $H(z,t)$ for all $z\in Z$ and all $t>0$.
\end{lemma}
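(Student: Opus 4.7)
The plan is to closely follow the strategy of Proposition \ref{triv}, with additional care devoted to preserving the hyperbolicity of $c$. First, by Lemma \ref{Lipschitz-approx} (applicable because $T_0$ has trivial arc stabilizers), I would choose a tree $S\in cv_N$ close to $T_0$ admitting an optimal morphism $f\colon S\to T_0$; since $S\in cv_N$, the action is free and in particular $c$ is hyperbolic in $S$. I would then extend $f$ via Lemma \ref{optimal-map} to a continuous family $\Psi_S(W)\colon S_W\to W$ of optimal morphisms on a neighborhood of $T_0$, with $S_W$ lying in the open simplex of $S$, and shrink this neighborhood to $\calw\subseteq\calv$ via Lemma \ref{fold}. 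The homotopy $H\colon(Z\cap\calw)\times[0,1]\to\calv$ would then be defined in two phases as in Proposition \ref{triv}: for $t\in[0,1/2]$, $H(z,t)$ traverses the canonical folding path of Proposition \ref{factor} directed by $\Psi_S(z)$ in reverse, from $z$ at $t=0$ to $S_z$ at $t=1/2$; for $t\in[1/2,1]$, $H(z,t)$ runs along the straight line in the open simplex of $S$ from $S_z$ to $S$. Continuity and containment in $\calv$ then follow exactly as in the proof of Proposition \ref{triv}, using Proposition \ref{factor}, Lemma \ref{optimal-map}, and Lemma \ref{fold}.

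The main obstacle is verifying that $c$ is hyperbolic in $H(z,t)$ for every $t>0$. On $[1/2,1]$ this is automatic, since $H(z,t)$ lies in the open simplex of $S\subseteq cv_N$, where every nontrivial element acts freely. On $(0,1/2]$ one travels along the folding path, and here I would proceed as follows. Since $z\in Z$ is dual to a measured foliation on $\Sigma$, its arc stabilizers are trivial; applying Lemma \ref{id-time} to $\Psi_S(z)$ (whose source $S_z$ has trivial arc stabilizers, so the hypothesis is vacuously satisfied) one concludes that no intermediate folding-tree can have $c$ stabilize a non-degenerate arc. Combined with the fact that the translation length $\ell(c)$ is continuous and monotone non-increasing along the folding path, starting from $\ell_{S_z}(c)>0$, this already handles the case where $c$ is hyperbolic in $z$: monotonicity then gives $\ell_{H(z,t)}(c)\ge\ell_z(c)>0$ for every $t\in(0,1/2]$.

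The principal remaining difficulty is the case where $c$ is elliptic in $z$ (which can happen when the foliation has $c$ as a leaf): the folding path could then a priori cause $c$ to become elliptic at some folding-time strictly less than $1$, violating the requirement at a nontrivial interval of times near $t=0$. To resolve this uniformly in $z$, I would modify the homotopy near $t=0$ by prepending a short preliminary segment obtained via a graph-of-actions construction, specifically by introducing a small edge translated by $c$ whose length tends to $0$ as $t\to 0^+$ (this edge can be produced from a one-edge free splitting adapted to $c$, summed into $z$ with a small weight). Such a modification varies continuously in $z$, takes $z$ to trees where $c$ is hyperbolic for every $t>0$, and after composition with the folding--straight homotopy above gives a homotopy with all the required properties. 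I expect the delicate part of the proof to be the precise formulation of this opening construction and the verification that it continuously interpolates with the folding-path part of $H$ while staying inside $\calv$.
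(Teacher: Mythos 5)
Your setup follows the paper's approach (approximation via Lemma~\ref{Lipschitz-approx}, the continuous family of optimal morphisms from Lemma~\ref{optimal-map}, the folding path of Proposition~\ref{factor}, containment via Lemma~\ref{fold}), and you correctly isolate the real difficulty: when $c$ is elliptic in $z$, the canonical folding path from $S_z$ to $z$ could make $c$ elliptic before the final folding time, so that $c$ would be elliptic for $H(z,t)$ on a nondegenerate interval of small $t>0$. However, your proposed remedy --- ``prepending a short preliminary segment \dots introducing a small edge translated by $c$ \dots produced from a one-edge free splitting adapted to $c$'' --- is where the argument breaks down. The element $c$ is the boundary curve of $\Sigma$ with $\pi_1(\Sigma)\cong F_N$, so $c$ is not contained in any proper free factor of $F_N$; there is no free splitting of $F_N$ relative to $c$ in the sense required, and it is not clear how to produce an element of $\overline{cv_N}$ by ``summing in'' a small $c$-translated edge, nor how to do this continuously and compatibly with the folding-path portion. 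The paper resolves the difficulty entirely inside the folding path itself, by three observations you do not make: (a) precisely because $c$ is not in a proper free factor, any tree on the folding path in which $c$ is elliptic has no simplicial edges with trivial stabilizer; (b) combining this with Lemma~\ref{id-time} (no identification of nontrivial-stabilizer edges) and the non-identification results for dense-orbit arcs (\cite[Lemmas~1.9 and~1.10]{Hor14-1}), the folding path becomes \emph{constant} once it reaches such a tree; and (c) the translation length $\|c\|$ strictly decreases along the folding path until it hits $0$, so one can reparametrize the path by $\|c\|$. After this reparametrization, $\|c\|=0$ occurs only at the final time, which gives exactly the required hyperbolicity for all $t>0$ without any auxiliary construction. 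You should also note that the vacuous application of Lemma~\ref{id-time} you invoke only shows that $c$ fixes no nondegenerate arc in the intermediate trees; it does not rule out $c$ fixing a single point, so it does not by itself give hyperbolicity --- the reparametrization step is genuinely needed.
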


\begin{proof}
The proof of Lemma \ref{homotopy-lam} is the same as the proof of Proposition \ref{triv}, except that we have to show in addition that $c$ remains hyperbolic until it reaches $z$ along the canonical folding path from $S'$ to $z$ determined by the morphism $\Psi_S(z)$. Notice that $c$ is not contained in any proper free factor of $F_N$, so whenever $c$ becomes elliptic along a canonical folding path, the tree $T$ reached by the path contains no simplicial edge with trivial stabilizer. In view of Lemma \ref{id-time}, no two simplicial edges with nontrivial stabilizer can get identified by the folding process. In addition, an arc in a subtree with dense orbits from the Levitt decomposition of $T$ as a graph of actions cannot get identified with a simplicial edge with nontrivial stabilizer, and two such arcs cannot either get identified together \cite[Lemmas 1.9 and 1.10]{Hor14-1}. This implies that the canonical folding path $(H(\Psi_S(z),t))_{t\in [0,1]}$ becomes constant once it reaches $T$. To conclude the proof of Lemma \ref{homotopy-lam}, it remains to reparametrize this canonical folding path to ensure that it does not reach $T$ before $t=1$. Notice that $||c||_{H(\Psi_S(z),t)}$ decreases strictly as $t$ increases, until it becomes equal to $0$, and in addition the tree $H(\Psi_S(z),t)$ is the same for all $t\in [0,1]$ such that $||c||_{H(\Psi_S(z),t)}=0$. We can therefore reparametrize the canonical folding path by the translation length of $c$: for all $l\le ||c||_{H(\Psi_S(z),0)}$, we let $H'(\Psi_S(z),||c||_{H(\Psi_S(z),0)}-l)$ be the unique tree $T$ on the folding path for which $||c||_T=l$. To get a continuous map from $(Z\cap\calw)\times [0,1]$ to $\calv$, we then renormalize the parameter $l$ by dividing it by $||c||_{H(\Psi_S(z),0)}$.
\end{proof}

\subsection{The space $\overline{CV_N}$ is not locally $4$-connected when $N\ge 4$.}

We will now prove that Culler--Morgan's compactification $\overline{CV_N}$ of outer space is not an AR as soon as $N\ge 4$.

\begin{theo}\label{not-ar}
For all $N\ge 4$, the space $\overline{CV_N}$ is not locally $4$-connected, hence it is not an AR.
\end{theo}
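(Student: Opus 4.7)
The plan is to exhibit, for every $N\ge 4$, a single tree $T_0\in\overline{CV_N}$ at which local $4$-connectedness fails, following the outline given in the introduction. Let $\Sigma$ be a non-orientable surface of genus $3$ with one boundary curve $c$; then $\pi_1(\Sigma)$ is free of rank $3$, so we may fix a free factor decomposition $F_N=\pi_1(\Sigma)\ast F_{N-3}$. I would take $T_0$ to be a graph of actions over this splitting, with the $\pi_1(\Sigma)$-vertex carrying a tree dual to an arational measured foliation on $\Sigma$, with a simplicial edge of nontrivial stabilizer $\langle c\rangle$ attached at the fixed point of $c$ in that dual tree, and with a standard wedge of edges accounting for the remaining $F_{N-3}$-factor. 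Writing $X(c)$ for the closed subset of $\overline{CV_N}$ of trees in which $c$ fixes a nondegenerate arc, the first step is to fix an open neighborhood $\calu$ of $T_0$ on which $X(c)$ locally separates $\overline{CV_N}$, with two complementary sides corresponding to the two possible orientations of the $c$-fixed arc. This local separation is exactly the defect that the Pacman compactification will later cure.

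The second step is to produce, for every smaller neighborhood $\calv\subseteq\calu$ of $T_0$, an embedded $3$-sphere $S^3_\calv\subseteq X(c)\cap\calv$ which is not nullhomologous in $X(c)\cap\calu$. I would obtain this sphere from an orientable subsurface $\Sigma'\subseteq\Sigma$ complementary to an embedded Möbius band: $\Sigma'$ is a once-holed torus with two boundary components, so $\mathcal{PMF}(\Sigma')$ is a $3$-sphere that embeds into $\mathcal{PMF}(\Sigma)\cap X(c)$. Rescaling the underlying foliations makes its image sit in any prescribed neighborhood of $T_0$. The non-nullhomology in $X(c)\cap\calu$ is the delicate topological input, and ultimately reflects the fact that the core of the Möbius band represents a proper square in $\pi_1(\Sigma)$, so the stratum of $\mathcal{PMF}(\Sigma)$ whose dual trees are very small misses precisely the locus where $\mathcal{PMF}(\Sigma')$ would bound.

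The third step is to cap $S^3_\calv$ off on the two sides of $X(c)$ in $\calv$. Applying Lemma~\ref{homotopy-lam} to the set of trees dual to measured foliations on $\Sigma$ gives, inside $\calv$, a deformation of $S^3_\calv$ to a single tree $S\in cv_N$ along which $c$ is hyperbolic at all positive times; by choosing the two deformation targets $S_\pm$ to lie on the two complementary sides of $X(c)$, one produces two maps $B^4_{\pm}\to\calv$ with common boundary $S^3_\calv$ and with interiors in the two complementary components of $X(c)\cap\calv$ in $\calv$. Gluing along $S^3_\calv$ yields a continuous map $\sigma:S^4\to\calv$.

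The main obstacle is then to prove that $\sigma$ is not nullhomotopic in $\calu$. The cleanest approach is via the \v{C}ech homology argument developed in Appendix~\ref{sec-Cech}: a nullhomotopy of $\sigma$ in $\calu$ would yield a map $B^5\to\calu$, and slicing its image by $X(c)\cap\calu$ and applying a Mayer--Vietoris-type splitting along the two sides of $X(c)$ would produce a \v{C}ech $4$-chain in $X(c)\cap\calu$ with boundary the $3$-sphere $S^3_\calv$, contradicting the non-nullhomology established in the second step. Making this slicing-and-splitting rigorous at the level of \v{C}ech chains, and reconciling it with singular homology where the two coincide, is the technical heart of the proof. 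Once it is in place, the conclusion is immediate: no $\calv\subseteq\calu$ has the property that every continuous $4$-sphere in $\calv$ is nullhomotopic in $\calu$, so $\overline{CV_N}$ fails to be $LC^4$ at $T_0$, and therefore fails to be an AR, since every AR is locally contractible and hence $LC^n$ for every $n$.
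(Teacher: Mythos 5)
Your outline matches the paper's strategy closely: the choice of $T_0$ with a surface-dual subtree and an adjacent simplicial arc stabilized by $c$, the $3$-sphere $\mathcal{PMF}(\Sigma_0)$ coming from the orientable complement of a M\"obius band, the two-sided capping via Lemma~\ref{homotopy-lam}, and the appeal to \v{C}ech homology. Two points need attention, however.

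First, ``rescaling the underlying foliations'' will not move the image $3$-sphere into a prescribed neighborhood $\calv$ of $T_0$: rescaling a transverse measure leaves the projective class unchanged, and after projectivization the tree is unchanged too. What the paper does instead is choose $\mathcal{F}_0$ to be the attracting fixed point of a pseudo-Anosov $f$ and use the uniform north--south dynamics of $f$ on $\mathcal{PMF}(\Sigma)$ to push $\mathcal{PMF}(\Sigma_0)$ into any given neighborhood of $\mathcal{F}_0$ by replacing $\Sigma_0$ with $f^k(\Sigma_0)$ for large $k$. This is a small but genuine fix you need.

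Second, and more substantively, your description of the final obstruction step is not quite what carries the argument, and the gap you defer is exactly where the paper supplies a specific device. ``Slicing the image of $B^5$ by $X(c)\cap\calu$'' is not well posed, and a Mayer--Vietoris splitting along $X(c)$ would require control on the slice that is not available a priori. The paper instead introduces the \emph{oriented translation length} $\theta:\calu\to\R$ of $c$ (using a hyperbolic element $g$ whose axis overlaps $\text{Char}(c)$ as a reference orientation). This single continuous function has $\theta^{-1}(0)=X(c)\cap\calu$ and its sign records which side of $X(c)$ a tree lies on, so that the glued map $f:S^4\to\calv$ is built precisely so that $h=\theta\circ f:S^4\to\R$ is a \emph{standard} map. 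Then for any hypothetical extension $\tilde f:B^5\to\calu$, one applies Lemma~\ref{Cech} to $\tilde h=\theta\circ\tilde f$: the inclusion $S^3\hookrightarrow\tilde h^{-1}(0)$ is trivial in $\check H_3$ with $\mathbb Z/2$ coefficients, and pushing forward by $\tilde f$ into $X(c)\cap\calu$ contradicts the fact that $S^3$ is a \emph{retract} of $X(c)\cap\calu$ (which is stronger than and implies non-nullhomology). So the argument hinges on finding the continuous real-valued function $\theta$ that makes the restriction standard; until you identify $\theta$ and phrase the conclusion via Lemma~\ref{Cech} rather than a Mayer--Vietoris decomposition, the ``technical heart'' you flag as the main obstacle is indeed unresolved.
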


\begin{rk}
It can actually be shown however that the closure $\overline{CV_2}$ is an absolute retract, and we also believe that $\overline{CV_3}$ is an absolute retract, though establishing this fact certainly requires a bit more work than the arguments from the present paper.
\end{rk}

\paragraph*{An embedded $3$-sphere in $\mathcal{PMF}^{vs}(\Sigma)$.}
 When $\Sigma$ is a compact surface of negative Euler characteristic $\chi(\Sigma)<0$ we denote by
$\mathcal{PML}(\Sigma)$ the space of projectivized measured geodesic laminations
on $\Sigma$ so that every boundary component has measure 0. 
The space $\mathcal{PML}(\Sigma)$ is homeomorphic to the sphere
$S^{-3\chi(\Sigma)-b-1}$ where $b$ is the number of boundary
components (see \cite[Theorem~3]{thurston} or \cite[Proposition~1.5]{hatcher}). 

We now specialize and let
$\Sigma$ be a nonorientable surface of genus $3$ with one boundary
component (so that its Euler characteristic is $-2$). Thus
$\mathcal{PML}(\Sigma)=S^4$. We mention that $\Sigma$ admits pseudo-Anosov homeomorphisms \cite{thurston,penner}.  We denote by
$\mathcal{PML}^{vs}(\Sigma)$ the subspace of $\mathcal{PML}(\Sigma)$
made of all laminations that are dual to very small trees
(equivalently, all laminations that do not contain any
$1$-sided compact leaf, see \cite{skora}). Let $\gamma$ be a simple
closed curve on $\Sigma$ that separates $\Sigma$ into a Möbius band
and an orientable surface $\Sigma_0$ (which is a compact surface of
genus $1$ with two boundary components). Denote by $\gamma_0$ the core of the Möbius band, which is $1$-sided and geodesic. Then the space
$\mathcal{PML}(\Sigma_0)=S^3$ is a subset of
$\mathcal{PML}^{vs}(\Sigma)$, however
$\gamma_0\notin\mathcal{PML}^{vs}(\Sigma)$. The key observation for constructing $4$-spheres
in $\overline{CV_N}$ showing that $\overline{CV_N}$ is not locally
$4$-connected will be the following.

\begin{lemma}\label{pmf}
The subset
$\mathcal{PML}(\Sigma_0)\subseteq\mathcal{PML}^{vs}(\Sigma)$ is a
$3$-sphere which is a retract of $\mathcal{PML}^{vs}(\Sigma)$.
\end{lemma}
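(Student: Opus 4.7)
The plan is to identify $\mathcal{PMF}(\Sigma_0)$ with $S^3$ via Thurston's classical dimension count, to describe a natural embedding $\iota:\mathcal{PMF}(\Sigma_0)\hookrightarrow\mathcal{PMF}^{vs}(\Sigma)$ obtained by extending foliations trivially across the Möbius band $M$, and to construct a retraction $r:\mathcal{PMF}^{vs}(\Sigma)\to\mathcal{PMF}(\Sigma_0)$ given by restricting foliations to the subsurface $\Sigma_0$. The dimension claim is immediate: since $\Sigma_0$ is orientable of genus $1$ with two boundary components, $\mathcal{MF}(\Sigma_0)\cong\mathbb{R}^{6\cdot 1 -6 +2\cdot 2}=\mathbb{R}^4$, and hence $\mathcal{PMF}(\Sigma_0)\cong S^3$.

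To define $\iota$, let $c_M$ denote the core of $M$, so that $\pi_1(\Sigma)=\pi_1(\Sigma_0)*_{\langle\gamma\rangle=\langle c_M^2\rangle}\langle c_M\rangle$. Given $F\in\mathcal{MF}(\Sigma_0)$ with dual $\pi_1(\Sigma_0)$-tree $T_F$, one builds $T_{\iota(F)}$ as the Bass--Serre refinement in which each $\gamma$-fixed subtree of $T_F$ carries an adjacent $\langle c_M\rangle$-stabilized vertex; the underlying $\mathbb{R}$-tree is essentially $T_F$ (up to length-$0$ attachments), with arc stabilizers either preserved from $T_F$ or upgraded from $\langle\gamma\rangle$ to $\langle c_M\rangle$. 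Since $c_M$ is not a proper power in $\pi_1(\Sigma)=F_3$, and the simple closed curves on $\Sigma_0$ giving arc stabilizers in $T_F$ are not proper powers in $\pi_1(\Sigma)$ either (the only potential offender, $\gamma$, has been absorbed into the root-closed group $\langle c_M\rangle$), all arc stabilizers are cyclic and root-closed, and tripod stabilizers remain trivial; thus $\iota(F)\in\mathcal{PMF}^{vs}(\Sigma)$.

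The map $r([F])=[F|_{\Sigma_0}]$ is continuous because intersection numbers with simple closed curves $\alpha$ on $\Sigma_0$ are continuous on $\mathcal{MF}(\Sigma)$ and satisfy $i(F,\alpha)=i(F|_{\Sigma_0},\alpha)$, and tautologically $r\circ\iota=\mathrm{id}$. The non-obvious point is that $F|_{\Sigma_0}\neq 0$ for every $F\in\mathcal{PMF}^{vs}(\Sigma)$: if $F|_{\Sigma_0}$ vanished, then $F$ would be supported in $M$, forcing $F$ to be a scalar multiple of the unique measured foliation on a Möbius band, namely the one concentrated on $c_M$; but the dual tree of the $c_M$-foliation, after subdividing to resolve the inversion coming from the one-sidedness of $c_M$, has edge stabilizer $\langle c_M^2\rangle=\langle\gamma\rangle$, which is not root-closed in $\pi_1(\Sigma)$, so $[c_M]\notin\mathcal{PMF}^{vs}(\Sigma)$, a contradiction.

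The main obstacle I anticipate is the careful Bass--Serre bookkeeping in defining $\iota$: one must verify that the extension of the $\pi_1(\Sigma_0)$-action on $T_F$ to a $\pi_1(\Sigma)$-action yields a bona fide very small tree for every type of $\gamma$-fixed configuration (no fixed point, an isolated fixed point, or a nondegenerate fixed arc), and that this tree-level construction faithfully matches the geometric operation of extending a foliation trivially across $M$. The analysis of $[c_M]$ in the preceding paragraph, while conceptually clean, is the technical linchpin ensuring that the restriction $r$ lands in $\mathcal{PMF}(\Sigma_0)$.
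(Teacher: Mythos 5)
Your proposal takes a genuinely different route from the paper. The paper argues topologically: $\mathcal{PMF}(\Sigma_0)\cong S^3$ is an embedded sphere in $\mathcal{PMF}(\Sigma)\cong S^4$, hence separates it into two open balls by Jordan--Brouwer; since each ball contains a boundary-of-Möbius-band curve that is not in $\mathcal{PMF}^{vs}(\Sigma)$ (one side is even the cone over $\gamma$ with base $\mathcal{PMF}(\Sigma_0)$), each punctured ball radially retracts onto $\mathcal{PMF}(\Sigma_0)$, and these retractions glue. You instead propose to build an explicit retraction by ``restricting'' a foliation to $\Sigma_0$. The geometric input is the same — you both hinge on the fact that the Möbius-band core (equivalently $\gamma=c_M^2$) has a dual tree whose edge stabilizer is not root-closed, so $[c_M]\notin\mathcal{PMF}^{vs}(\Sigma)$ — but you use it to argue $F|_{\Sigma_0}\neq 0$, while the paper uses it to puncture the cone side of the complement.

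The gap in your proposal is the asserted well-definedness and continuity of the restriction map $r$. You argue that $r$ is continuous because $i(F,\alpha)=i(F|_{\Sigma_0},\alpha)$ for $\alpha\subset\Sigma_0$ and intersection numbers are continuous; but this implicitly requires two things you do not justify. First, that the function $\alpha\mapsto i(F,\alpha)$, restricted to curves on $\Sigma_0$, is actually realized by \emph{some} measured foliation on $\Sigma_0$. Cutting a measured lamination on $\Sigma$ along $\gamma$ produces arcs with endpoints on $\gamma$, which are not allowed in $\mathcal{ML}(\Sigma_0)$; one must collapse or do Whitehead moves, and showing the resulting object has the right intersection numbers (and exists) is precisely the content one would need. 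Second, that the resulting map $\mathcal{MF}(\Sigma)\to\mathcal{MF}(\Sigma_0)\cup\{0\}$ is continuous: restriction to a subsurface is notoriously discontinuous in general (e.g.\ at laminations containing $\gamma$ as a leaf), and while the problematic limit points happen to be excluded from $\mathcal{PMF}^{vs}(\Sigma)$, that exclusion has to be woven into the continuity argument rather than taken for granted. Your tree-theoretic reformulation (take the $\pi_1(\Sigma_0)$-minimal subtree of $T_F$) faces the analogous issue: continuity of the minimal-subtree operation and the fact that the result is again dual to a foliation on $\Sigma_0$ both need justification. None of this is impossible, but it is substantive; the paper's separation argument sidesteps it entirely, which is why they chose it.
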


\begin{proof}
The space $\mathcal{PML}(\Sigma_0)$ is a topologically embedded $3$-dimensional sphere in the $4$-dimensional sphere $\mathcal{PML}(\Sigma)$, so it separates
$\mathcal{PML}(\Sigma)$. We prove below that both sides of
$\mathcal{PML}(\Sigma)\setminus\mathcal{PML}(\Sigma_0)$ contain
curves arising as the core of a Möbius band, for which the dual
tree is not very small. This implies that
$\mathcal{PML}(\Sigma_0)$ is a retract of
$\mathcal{PML}^{vs}(\Sigma)$. 

To prove the assertion first notice that one of the two complementary
components is a cone of
$\mathcal{PML}(\Sigma_0)$ over $\gamma_0$. If $\phi$ is a pseudo-Anosov homeomorphism of $\Sigma$, then $\phi(\gamma_0)$ is the core of a M\"obius band, and it belongs to the other complementary
component.
\end{proof}

\paragraph*{Definition of the tree $T_0$.} We will now define a tree $T_0\in\overline{CV_N}$ at which $\overline{CV_N}$ will fail to be locally $4$-connected.

Let $N\ge 4$. Let $T_0\in\overline{CV_N}$ be the tree defined
in the following way (see Figure \ref{fig-tree}). Let
$\calf_0$ be an arational measured foliation on $\Sigma$, obtained as
the attracting foliation of a pseudo-Anosov diffeomorphism $f$ of
$\Sigma$. Let $A$ be the fundamental group of $\Sigma$, which is free of
rank $3$, let $T_A$ be the very small $A$-tree dual to $\calf_0$, and
let $x_A\in T_A$ be the unique point fixed by a nontrivial element
$c_A$ corresponding to the boundary curve of $\Sigma$. 

Let $B=F_{N-2}$, which we write as a free product $B=B'\ast\langle c_B\rangle$ for some element $c_B\in B$. Let $T_B\in\overline{cv_{N-2}}$ be a tree that splits as a graph of actions over this free splitting of $B$, with vertex actions a free and simplicial action $T_{B'}\in cv_{N-3}$, and the trivial action of $\langle c_B\rangle$ on a point, where the edge with trivial stabilizer from the splitting is given length $0$. Let $x_B$ be the point fixed by $c_B$ in $T_B$. Notice that $x_B$ belongs to the $B'$-minimal subtree $T_{B'}$ of $T_B$.

Write $F_N=A\ast_{c_A=c_B}B$, and let
$T_0$ be the very small $F_N$-tree obtained as a graph of actions over
this amalgamated free product, with vertex actions $T_A$ and $T_B$ and
attaching points $x_A$ and $x_B$, where the simplicial edges with
nontrivial stabilizers coming from the splitting are assigned length
$1$. We let $c:=c_A=c_B\in F_N$.

\begin{figure}
\begin{center}
\input{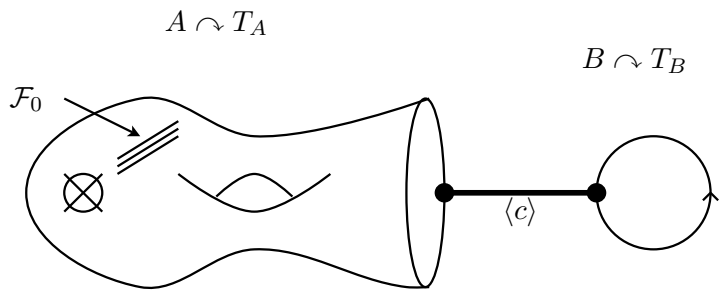}
\caption{The tree $T_0$ at which $\overline{CV_N}$ fails to be locally $4$-connected.}
\label{fig-tree}
\end{center}
\end{figure}

\paragraph*{Finding embedded $4$-spheres in a neighborhood of $T_0$.}
Choose an element $g\in F_N$ which is hyperbolic in $T_0$ and whose axis crosses the arc fixed by $c$. Let $\calu$ be an open neighborhood of $T_0$ 
in $\overline{CV_N}$ consisting of trees where $g$ is hyperbolic,
the characteristic sets of $g$ and $c$ overlap, and the surface
group $A$ is not elliptic. The crucial
property satisfied by such a neighborhood $\calu$ of $T_0$ is that we have the {\it oriented translation
  length} of $c$, namely the continuous function
$$\theta:\calu\to\R$$
defined by
$$\theta(T)=\epsilon_T \frac{||c||_T}{||g||_T}$$
where $\epsilon_T=1$ (resp. $-1$) if $c$ is hyperbolic in $T$ and the axes
of $c$ and $g$ give the same (resp. opposite) orientation to the
overlap, and $\epsilon_T=0$ if $c$ is elliptic. We will denote by $\calu_+$ (resp. $\calu_-$) the subset of $\calu$ made of trees such that $\theta(T)\ge 0$ (resp. $\theta(T)\le 0$). Similarly, given any smaller neighborhood $\calv\subseteq\calu$ of $T_0$, we will let $\calv_+:=\calv\cap\calu_+$ and $\calv_-:=\calv\cap\calu_-$.

We denote by $\overline{cv(A)}$ the space of all very small $A$-actions on $\mathbb{R}$-trees (which is also the closure of the outer space associated to $A$). Let $\calu_A$ be an open neighborhood of $T_A$ in $\overline{cv(A)}$ such that there exists an element $a\in A$ such that the characteristic sets of $c_A$ and $a$ have empty intersection in all trees in $\calu_A$.  For all $U_A\in\calu_A$, we then denote by $b(U_A)$ the projection of $\text{Char}_{U_A}(a)$ to $\text{Char}_{U_A}(c_A)$: this gives us a continuous choice of a basepoint in every tree $U_A\in\calu_A$. Given a tree $U_A\in\overline{cv(A)}$ in which $c_A$ is hyperbolic, and $t\in\mathbb{R}$, we let $b(U_A,t)$ be the unique point on the axis of $c_A$ at distance $|t|$ from $b(U_A)$, and such that $[b(U_A),b(U_A,t)]$ is oriented in the same direction as the axis of $c_A$ if $t\ge 0$, and in the opposite direction if $t\le 0$. We then let $\mathcal{T}(U_A,t)\in\overline{cv(A)}$ be the tree which splits as a graph of actions over the free product $F_N=A\ast B'$, with vertex actions $U_A$ and $T_{B'}$, and attaching points $b(U_A,t)$ and $x_B$ (recall from the above that $x_B\in T_{B'}$), where the simplicial edge from the splitting is assigned length $0$. The following lemma follows from the argument in the proof of \cite[Lemma 5.6]{Hor14-2}.

\begin{lemma}\label{lemma-sphere}
For every open neighborhood $\calv\subseteq\calu$ of $T_0$ in $\overline{CV_N}$, there exists an open neighborhood $\calv_A\subseteq\calu_A$ of $T_A$ in $\overline{cv(A)}$ such that for all $U_A\in\calv_A$ such that $c_A$ is hyperbolic in $U_A$, we have $\mathcal{T}(U_A,1)\in\calv_+$, and $\mathcal{T}(U_A,-1)\in\calv_-$.
\qed
\end{lemma}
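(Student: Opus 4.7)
The plan is to mimic closely the proof of \cite[Lemma 5.6]{Hor14-2}, which separates into a convergence step (ensuring $\mathcal{T}(U_A, \pm 1)$ lies in $\calv$) and an orientation computation (pinning down the sign of $\theta$). First, the assignment $(U_A, t) \mapsto \mathcal{T}(U_A, t)$ is continuous in the equivariant Gromov--Hausdorff topology on the region where $c_A$ is hyperbolic in $U_A$, because $b(U_A)$ and the points $b(U_A, t)$ depend continuously on $(U_A, t)$ and graphs of actions depend continuously on their vertex trees and attaching data. Second, I claim $\mathcal{T}(U_A, \pm 1) \to T_0$ as $U_A \to T_A$ through trees with $||c_A||_{U_A} \to 0$. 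Since on the irreducible part of $\overline{CV_N}$ the equivariant Gromov--Hausdorff topology is detected by translation lengths, it suffices to verify $||\gamma||_{\mathcal{T}(U_A, \pm 1)} \to ||\gamma||_{T_0}$ for every $\gamma \in F_N$.

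For $\gamma \in A$ or $\gamma \in B'$ this is immediate from the graph-of-actions formula. For a general cyclically reduced element $\gamma = a_1 b_1 \cdots a_k b_k$ in $A \ast B'$, I would apply the displacement formula $d(p, \alpha p) = ||\alpha|| + 2 d(p, \mathrm{Ax}(\alpha))$ in $U_A$ with $p = b(U_A, \pm 1)$: since $p$ lies at distance $1$ from $b(U_A)$ along $\mathrm{Ax}_{U_A}(c_A)$, each $A$-syllable $a_i$ contributes an extra $+2$ to $||\gamma||_{\mathcal{T}(U_A, \pm 1)}$ compared with the corresponding contribution computed in $T_A$. In the limit these $+2$ terms match precisely the two crossings per $A$-syllable of the length-one $c$-fixed edge that appears in the graph-of-actions description of $T_0$ over $A \ast_c B$, yielding the required convergence. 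Combined with continuity, this gives $\mathcal{T}(U_A, \pm 1) \in \calv$ for every $U_A$ in a sufficiently small neighborhood $\calv_A \subseteq \calu_A$ of $T_A$ with $c_A$ hyperbolic in $U_A$.

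For the sign of $\theta$: in $\mathcal{T}(U_A, t)$ the element $c = c_A$ is hyperbolic with axis $\mathrm{Ax}_{U_A}(c_A)$, while the axis of $g = ab'$ meets the standard copy of $U_A$ in the segment from $b(U_A, t)$ to $a \cdot b(U_A, t)$. Shrinking $\calv_A$ if necessary, the projection of $a \cdot b(U_A, t)$ to $\mathrm{Ax}_{U_A}(c_A)$ is $b(U_A)$, so the overlap of the two axes is exactly $[b(U_A), b(U_A, t)]$. Flipping the sign of $t$ reflects this overlap across $b(U_A)$ along $\mathrm{Ax}_{U_A}(c_A)$, while leaving the orientation of $g$'s axis unchanged; hence the relative orientation of $c$ and $g$ on the overlap flips, and $\theta(\mathcal{T}(U_A, 1))$ and $\theta(\mathcal{T}(U_A, -1))$ have opposite signs. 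Matching the paper's sign convention gives $\mathcal{T}(U_A, 1) \in \calv_+$ and $\mathcal{T}(U_A, -1) \in \calv_-$.

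The delicate step, which I expect to be the main obstacle, is the limit $\mathcal{T}(U_A, \pm 1) \to T_0$: one must verify that the length-one $c$-fixed edge of $T_0$ really emerges from the distance-$1$ segment along the collapsing axis of $c_A$ in $U_A$, rather than being lost in the collapse. The $+2$ translation-length bookkeeping above is the concrete certification of this, and is the heart of the argument in \cite[Lemma 5.6]{Hor14-2}.
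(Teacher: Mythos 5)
The paper gives no proof of this lemma; it refers to the argument of \cite[Lemma 5.6]{Hor14-2} and to Section~\ref{sec-loc-contr}, where the analogous continuity is established via Guirardel's Reduction Lemma. Your proposal reconstructs an argument in the same spirit, and the overall structure (continuity, convergence $\mathcal{T}(U_A,\pm 1)\to T_0$, sign of $\theta$) is the right one. Two points deserve attention.

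First, the ``$+2$ per $A$-syllable'' bookkeeping does not, as written, certify the convergence you flag as the delicate step -- it presupposes it. Writing $p_{a_i}$ for the projection of $\mathrm{Char}_{U_A}(a_i)$ to $\mathrm{Ax}_{U_A}(c_A)$, the displacement formula gives $d_{U_A}(b(U_A,t),a_i b(U_A,t)) = \lVert a_i\rVert_{U_A} + 2\bigl(d_{\mathrm{Ax}(c_A)}(b(U_A,t),p_{a_i}) + d(\mathrm{Ax}(c_A),\mathrm{Char}(a_i))\bigr)$, and one gets the clean $+2$ relative to $d_{T_A}(x_A,a_i x_A)$ only if $p_{a_i}$ does not fall strictly between $b(U_A)$ and $b(U_A,t)$ -- in other words, only if the length-$1$ segment $[b(U_A),b(U_A,t)]$ carries no branching coming from $\mathrm{Char}(a_i)$. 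For the particular $a$ defining the basepoint this is guaranteed by the choice of $\calu_A$, but for the other $A$-syllables of a general $\gamma$ it is not automatic. The point that makes the argument go through is that $d_{U_A}(b(U_A),p_{a_i})\to d_{T_A}(x_A,x_A)=0$, because characteristic sets and projections between them vary continuously in the equivariant Gromov--Hausdorff topology (this is exactly what Lemmas~\ref{optimal-map}/\ref{basepoint} and the Reduction Lemma are doing in the paper's own treatment). You should invoke this explicitly rather than rely on the displacement formula alone, which without it gives the wrong constant whenever some $p_{a_i}$ sits inside the segment.

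Second, the sign of $\theta$ is asserted rather than computed. Your argument correctly shows that the relative orientation of $c$ and $g$ on the overlap $[b(U_A),b(U_A,t)]$ flips when $t$ changes sign (the overlap is always traversed by the $g$-axis in the direction $b(U_A,t)\to b(U_A)$, towards $\mathrm{Char}(a)$, while the $c$-orientation is fixed), so the two trees land on opposite sides of $X(c)$. But tracing the conventions in the paper ($b(U_A,1)$ lies in the positive $c$-direction from $b(U_A)$, the $g$-axis oriented $p\to gp$) actually gives $\epsilon(\mathcal{T}(U_A,1))=-1$, i.e.\ the opposite assignment to $\calv_{\pm}$ from the one stated. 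This is immaterial for the use made of the lemma in Proposition~\ref{bad-sphere} (only the fact that $\iota_+$ and $\iota_-$ land on opposite sides of $X(c)$ matters), but it is worth noting that the specific signs depend on a convention that the proposal does not actually pin down.
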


Denote by $X(c)$ the subspace of $\overline{CV_N}$ made of all trees where $c$ is elliptic, and by $X(c)^*$ the subset of $X(c)$ consisting of trees where the
surface group $A$ is not elliptic. Notice that our choice of neighborhood $\calu$ of $T_0$ ensures that $\calu\cap X(c)\subseteq X(c)^{\ast}$. For any tree $T$ in $X(c)^\ast$, the minimal $A$-invariant subtree is dual to some measured foliation on
$\Sigma$ \cite{skora}, so we have a map $\psi:X(c)^\ast\to
\mathcal{PML}^{vs}(\Sigma)$.

\begin{prop}\label{bad-sphere}
For every open neighborhood
$\mathcal{V}\subseteq\calu$ of $T_0$, there exists a topologically
embedded $3$-sphere $S^3$ in $X(c)\cap\calv$ which is a retract of $X(c)\cap\calu$, and such that the inclusion map $\iota:S^3\hookrightarrow X(c)\cap\calv$ extends to continuous maps $\iota_{\pm}:B^4_{\pm}\to\mathcal{V}_{\pm}$ (where $B^4_{\pm}$ are $4$-balls with boundary $S^3$), with $\iota_{\pm}(B^4_{\pm}\setminus S^3)\subseteq \overline{CV_N}\setminus X(c)$.
\end{prop}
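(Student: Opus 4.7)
The plan is to transport the $3$-sphere $\mathcal{PMF}(\Sigma_0) \subseteq \mathcal{PMF}^{vs}(\Sigma)$ from Lemma~\ref{pmf} into $\overline{CV_N}$ by the same graph-of-actions construction that produced $T_0$, after first pushing it close to the attracting foliation $\calf_0$ by iterating the pseudo-Anosov $f$. For $n$ sufficiently large, the image $\Sigma_n := f^n(\mathcal{PMF}(\Sigma_0))$ lies in an arbitrarily small neighborhood of $\calf_0$ in $\mathcal{PMF}(\Sigma)$; and since $f$ acts as a self-homeomorphism of $\mathcal{PMF}^{vs}(\Sigma)$, conjugating the retraction of Lemma~\ref{pmf} by $f^n$ shows that $\Sigma_n$ remains an embedded $3$-sphere which is a retract of $\mathcal{PMF}^{vs}(\Sigma)$. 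For each $\calf \in \Sigma_n$, I let $U_A(\calf) \in \overline{cv(A)}$ be the very small $A$-tree dual to $\calf$, in which $c_A$ is elliptic and fixes a unique point $x_\calf$, and I define $T(\calf) \in \overline{CV_N}$ to be the graph of actions over $A \ast_{c_A = c_B} B$ with vertex actions $U_A(\calf)$ and $T_B$, attaching points $x_\calf$ and $x_B$, and $c$-edge of length $1$, mimicking the construction of $T_0$. The map $\calf \mapsto T(\calf)$ is then a continuous topological embedding, and for $n$ large enough its image $S^3$ lies in $X(c) \cap \calv$; this will be the desired $3$-sphere.

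To see that $S^3$ is a retract of $X(c) \cap \calu$, I would compose the continuous map $\psi : X(c) \cap \calu \to \mathcal{PMF}^{vs}(\Sigma)$ (well-defined because $\calu \cap X(c) \subseteq X(c)^\ast$) with the retraction $r_n := f^n \circ r \circ f^{-n} : \mathcal{PMF}^{vs}(\Sigma) \to \Sigma_n$ obtained from Lemma~\ref{pmf}, and then with $\calf \mapsto T(\calf)$. The $A$-minimal subtree of $T(\calf)$ is exactly $U_A(\calf)$, dual to $\calf$, so $\psi(T(\calf)) = \calf$ and this composition restricts to the identity on $S^3$.

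For the capping maps $\iota_\pm : B^4_\pm \to \calv_\pm$, the main input is Lemma~\ref{lemma-sphere}. For $n$ large, all trees $U_A(\calf)$ lie in the neighborhood $\calv_A \subseteq \calu_A$ of $T_A$ that it provides. I would build, for each $\calf \in \Sigma_n$, a continuous one-parameter family $U_A^\pm(\calf, s) \in \calv_A$ for $s \in [0, 1]$, obtained by $A$-equivariantly unfolding the fixed point $x_\calf$ of $c_A$ into an arc of length $s$ on which $c_A$ acts by translation (in the $+$ or $-$ direction, chosen to agree with the orientation detected by $\theta$). Then $U_A^\pm(\calf, 0) = U_A(\calf)$, while for $s > 0$ the element $c_A$ is hyperbolic in $U_A^\pm(\calf, s)$ with translation length $s$, and Lemma~\ref{lemma-sphere} yields $\mathcal{T}(U_A^\pm(\calf, s), \pm 1) \in \calv_\pm$ with $c$ hyperbolic, hence in $\overline{CV_N} \setminus X(c)$. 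A continuous interpolation from $T(\calf)$ at $s = 0$ to $\mathcal{T}(U_A^\pm(\calf, s_0), \pm 1)$ at some small $s_0 > 0$, followed by a further contraction of the slice $\{s = s_0\}$ to a single tree $T^\ast_\pm \in \calv_\pm$ (for example along straight paths in the simplex of graphs of actions over $F_N = A \ast B'$ with a fixed nearby simplicial source), then assembles into the desired $\iota_\pm : B^4_\pm = \Sigma_n \times [0, 1] / (\Sigma_n \times \{1\}) \to \calv_\pm$, whose boundary $\Sigma_n \times \{0\}$ is mapped by the inclusion into $X(c) \cap \calv$ and whose interior is mapped into $\calv_\pm \setminus X(c)$.

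The delicate step will be the continuous interpolation at $s = 0$ between $T(\calf)$, which lives in the amalgamated splitting $A \ast_{c_A = c_B} B$ with a $c$-edge of length $1$, and the free-product graph of actions $\mathcal{T}(U_A^\pm(\calf, s), \pm 1)$, which lives over the different splitting $F_N = A \ast B'$ with a trivially stabilized edge of length $0$. The path between them must simultaneously unfold the $c_A$-fixed point and contract the $c$-edge of $T(\calf)$, and its continuity at $s = 0$ is to be verified via the translation-length characterization of the equivariant Gromov--Hausdorff topology, using the continuity of the graph-of-actions construction in its vertex actions, attaching points, and edge lengths.
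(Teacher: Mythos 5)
Your construction of the embedded $3$-sphere and its retraction is exactly the one the paper uses: push $\mathcal{PMF}(\Sigma_0)$ towards $\calf_0$ by iterating the pseudo-Anosov, transport it into $X(c)$ via the graph-of-actions map $\phi$ over $A\ast_{c}B$ (the paper calls it $\phi$), and retract by $\phi\circ r_n\circ\psi$. That part is fine, and your remark about conjugating the retraction $r$ of Lemma~\ref{pmf} by $f^n$ correctly spells out a point the paper leaves implicit.

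The gap is in the construction of the caps. You propose to build the family $U_A^\pm(\calf,s)$ by ``$A$-equivariantly unfolding the fixed point $x_\calf$ of $c_A$ into an arc of length $s$ on which $c_A$ acts by translation.'' This operation is not well defined. The tree $U_A(\calf)$ dual to an arational foliation has dense orbits, so $x_\calf$ is a branch point with infinitely many directions, partitioned into infinitely many $\langle c_A\rangle$-orbits; turning $x_\calf$ into a translation axis forces you to choose, for each orbit of directions, a point of attachment in a fundamental domain, and there is no canonical, continuous (in $\calf$ and in $s$), very-small, minimal way to do this that converges to $U_A(\calf)$ as $s\to 0$. The further contraction of the slice $\{s=s_0\}$ to a single tree is likewise left vague (one cannot simply take ``straight paths in a simplex,'' since the trees $U_A^\pm(\calf,s_0)$ do not lie in a common simplex). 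The paper avoids both problems by invoking Lemma~\ref{homotopy-lam}: it produces a single homotopy $H_A:S_A^3\times[0,1]\to\calv_A$ to a fixed free simplicial tree $V_A\in cv(A)$, built from canonical folding paths and reparametrized by the translation length of $c$, so that $c$ is automatically hyperbolic for all $t>0$, with no arbitrary unfolding choices. One then sets $H_\pm(z,t)=\mathcal T(H_A(z,t),\pm1)$ for $t>0$, and the delicate continuity at $t=0$ (interpolating between the amalgam $A\ast_c B$ with a length-$1$ $c$-edge and the free product $A\ast B'$) is exactly the point established by the cited continuity argument from \cite[Lemma 4.6]{Hor14-2} and Lemma~\ref{f-cont}. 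You correctly identified this continuity as the delicate step, but you would need to replace the ad hoc unfolding by the folding-path homotopy of Lemma~\ref{homotopy-lam} for the argument to go through.
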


\begin{proof}
We identify $\mathcal{PML}(\Sigma)$ with a continuous lift in
$\mathcal{ML}(\Sigma)$. This gives a continuous injective map
$\phi:\mathcal{PML}^{vs}(\Sigma)\to X(c)^\ast$, mapping every measured
foliation $\mathcal{F}$ to the tree in $\overline{CV_N}$ that splits
as a graph of actions over $F_N=A\ast_{\langle c\rangle} B$, with vertex actions the
tree dual to $\mathcal{F}$, and the fixed action $B\actson T_B$, where the attaching points are the unique points fixed by $c$, and the simplicial edge from the splitting is assigned length $1$. The
composition $\psi\phi$ is the identity. Let $\Sigma_0\subset\Sigma$ be an oriented subsurface as in Lemma~\ref{pmf}. We can assume that the $3$-sphere $\phi(\mathcal{PML}(\Sigma_0))$ is contained in $\calv$: to achieve this, use uniform north-south dynamics of the pseudo-Anosov
homeomorphism $f$ on $\mathcal{PML}(\Sigma)$ to find
$k\in\mathbb{Z}$ so that $\phi(\mathcal{PML}(f^k(\Sigma_0)))\subseteq
X(c)\cap\calv$ (see \cite[Theorem 3.5]{Iva92}, which is stated there in the case of an orientable surface, but the similar statement for a nonorientable surface follows by considering the orientable double cover of $\Sigma$). Using Lemma~\ref{pmf}, we get that the 3-sphere
$\phi(\mathcal{PML}(\Sigma_0))$ is a retract of $X(c)\cap\calu$ (notice also that its image in $\overline{cv(A)}$ is again an embbeded $3$-sphere which we denote by $S_A^3$).  

Let now $\calv_A$ be an open neighborhood of $T_A$ in $\overline{cv(A)}$ provided by Lemma \ref{lemma-sphere}, and let $\calw_A\subseteq\calv_A$ be a smaller neighborhood of $T_A$ provided by Lemma \ref{homotopy-lam}. We can assume that the sphere constructed in the above paragraph is such that $S_A^3\subseteq\calw_A$. By Lemma~\ref{homotopy-lam}, there exists a tree $V_A\in\calv_A\cap cv(A)$ and a homotopy $H_A:S_A^3\times [0,1]\to \calv_A$ such that 
\begin{itemize}
\item for all $z\in S_A^3$, we have $H_A(z,0)=z$, and $H_A(z,1)=V_A$, and
\item for all $z\in S_A^3$ and all $t>0$, the element $c$ is hyperbolic in $H_A(z,t)$.
\end{itemize}
\noindent We now define $H_{\pm}:S^3\times [0,1]\to \calv_{\pm}$ by letting $H_{\pm}(z,0)=z$ for all $z\in S^3$, and $H_{\pm}(z,t)=\mathcal{T}(H_A(z,t),\pm 1)$ for all $z\in S^3$ and all $t>0$. Continuity follows from the argument from \cite[Lemma 5.6]{Hor14-2}. This enables us to construct the maps $\iota_{\pm}$, where the segment joining a point $z\in S^3$ to the center of $B^4_{\pm}$, is mapped to $H_{\pm}(z,[0,1])$. 
\end{proof}

Our proof of Theorem \ref{not-ar} uses a \v{C}ech homology argument, given in Appendix \ref{sec-Cech} of the paper. 

\begin{proof}[Proof of Theorem \ref{not-ar}]
We will show that for every neighborhood $\calv\subset\calu$ of $T_0$ there is a map
$S^4\to \calv$ which is not nullhomotopic in $\calu$. This is illustrated in Figure \ref{fig-sphere}. Let $S^3$ be a topologically embedded $3$-sphere in $X(c)\cap\calv$
provided by Proposition~\ref{bad-sphere}. In particular, the
inclusion map $\iota:S^3\hookrightarrow X(c)\cap\calu$ is nontrivial
in homology (either singular homology, or \v{C}ech homology $\check{H}_3$ with $\mathbb{Z}/2$ coefficients). In addition, we can extend $\iota$ to maps
$\iota_{\pm}:B^4_\pm \to\mathcal{V}_{\pm}$, with
$\iota_{\pm}(B^4_{\pm}\setminus S^3)\subseteq \overline{CV_N}\setminus X(c)$.
Gluing these two along the boundary produces a map $f:S^4\to\calv$, which restricts to
$\iota$ on $S^3$, and such that the composition $h=\theta f:S^4\to\R$ (where we recall that $\theta$ is the oriented translation length of $c$) is
standard (i.e. $h^{-1}(0)=S^3$, $h^{-1}([0,+\infty))=B^4_+$ and $h^{-1}((-\infty,0])=B^4_-$).

Now suppose that $f$ extends to $\tilde f:B^5\to\calu$, and let $\tilde h=\theta \tilde f:B^5\to\R$. If $\tilde h^{-1}(0)$ were a manifold with boundary $S^3$, then we would immediately deduce that the inclusion $$S^3\hookrightarrow\tilde h^{-1}(0)$$ is trivial in singular homology $H_3$. This may fail to be true, but we can still apply Lemma~\ref{Cech} to deduce that the above inclusion is trivial in \v{C}ech homology $\check{H}_3$ with $\mathbb{Z}/2$ coefficients. Applying $\tilde f$ and observing that
$\tilde f(\tilde h^{-1}(0))\subseteq X(c)\cap\calu$ we deduce that
$$f_{|S^3}=\iota:S^3\to X(c)\cap\calu$$ is trivial in $\check{H}_3$, a contradiction.
\end{proof}

\begin{figure}
\begin{center}
\def\JPicScale{.8}
\input{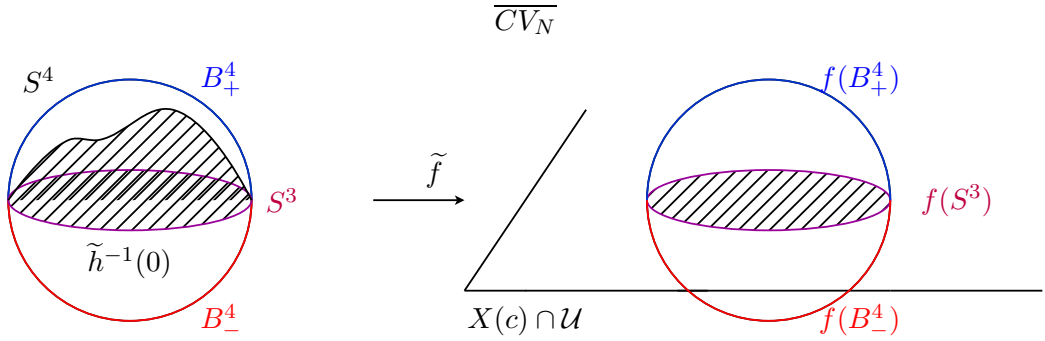}
\caption{Constructing the sphere $S^4$ showing failure of local $4$-connectivity of $\overline{CV_N}$.}
\label{fig-sphere}
\end{center}
\end{figure}

\section{The Pacman compactification of outer space}\label{sec-pacman}

In the present paper, we will be interested in another
compactification $\widehat{CV_N}$ of outer space, which we call the
\emph{Pacman compactification}. We also define $\widehat{cv_N}$ as the
unprojectivized version of $\widehat{CV_N}$. We will now define
$\widehat{CV_N}$ and establish some basic topological
properties.

A point in $\widehat{CV_N}$ is given by a (homothety class of a) tree $T\in\overline{CV_N}$,
together with an $F_N$-equivariant choice of orientation of the
characteristic set of every nontrivial element of $F_N$ that fixes a
nondegenerate arc in $T$ (notice that every element $g\in F_N$ which is hyperbolic in $T$ determines a natural orientation on its axis, which is isometric to a real line, and the element $g^{-1}$ determines the opposite orientation on this line). Precisely, given any nontrivial element $c\in F_N$
which is not a proper power, and whose characteristic set in $T$ is a
nondegenerate arc $\text{Char}_T(c):=[x,y]$ fixed by $c$, we
prescribe an orientation of $\text{Char}_T(c)$, and the orientation of
$\text{Char}_T(c^{-1})$ is required to be opposite to the orientation
of $\text{Char}_T(c)$. The $F_N$-translates of $[x,y]$ get the
induced orientation as required by $F_N$-equivariance. Notice that there is an
$F_N$-equivariant surjective map
$\pi:\widehat{CV_N}\to\overline{CV_N}$, which consists in forgetting
the orientations of the edges with nontrivial stabilizer. 

Given two oriented (possibly finite or infinite) geodesics $l$ and
$l'$ in an $\mathbb{R}$-tree $T$ with nondegenerate intersection, we
define the \emph{relative orientation} of $l$ and $l'$ as being equal
to $+1$ if the orientations of $l$ and $l'$ agree on their
intersection, and $-1$ otherwise. Given $T\in\widehat{CV_N}$, and two
elements $\alpha,\beta\in F_N$ whose characteristic sets have
nondegenerate intersection in $T$, we define the \emph{relative
  orientation} of the pair $(\alpha,\beta)$ in $T$ as being equal to
the relative orientation of their characteristic sets.

 We now define a topology on $\widehat{CV_N}$. Given an open set
$U\subseteq \overline{CV_N}$, and a finite (possibly empty) collection of pairs
$(\alpha_1,\beta_1),\dots,(\alpha_k,\beta_k)$ of elements of $F_N$,
such that for all $i\in\{1,\dots,k\}$, $\beta_i$ is hyperbolic and the
characteristic sets of $\alpha_i$ and $\beta_i$ have nondegenerate
intersection in all trees in $U$, we
let $$U((\alpha_1,\beta_1),\dots,(\alpha_k,\beta_k))$$ be the set of
all $T\in\widehat{CV_N}$ such that $\pi(T)\in U$, and the relative
orientation of $(\alpha_i,\beta_i)$ in $T$ is equal to $+1$ for all
$i\in\{1,\dots,k\}$. Notice that
given two open sets $U,U'\subseteq\overline{CV_N}$, and finite
collections of elements $\alpha_i,\beta_i,\alpha'_j,\beta'_j\in F_N$,
the intersection $U((\alpha_1,\beta_1),\dots,(\alpha_k,\beta_k))\cap
U'((\alpha'_1,\beta'_1),\dots,(\alpha'_l,\beta'_l))$ is equal to
$(U\cap
U')((\alpha_1,\beta_1),\dots,(\alpha_k,\beta_k),(\alpha'_1,\beta'_1),\dots,(\alpha'_l,\beta'_l))$. This
shows the following lemma.

\begin{lemma}
The sets $U((\alpha_1,\beta_1),\dots,(\alpha_k,\beta_k))$ form a basis of open sets for a topology on $\widehat{CV_N}$.
\qed
\end{lemma}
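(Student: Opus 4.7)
The plan is to verify the two standard basis axioms: that the proposed family covers $\widehat{CV_N}$, and that finite intersections of basic sets can be written as unions of basic sets (in fact here, as single basic sets).

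For the covering axiom, given any $T\in\widehat{CV_N}$, take $U$ to be any open neighborhood of $\pi(T)$ in $\overline{CV_N}$ and take the collection of pairs to be empty. Then $U()$ is by definition the set of all $T'\in\widehat{CV_N}$ with $\pi(T')\in U$, which contains $T$. In particular, taking $U=\overline{CV_N}$ shows that the whole space $\widehat{CV_N}$ arises as such a set, so the family is a cover.

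For the intersection axiom, the crucial computation was already recorded immediately before the statement of the lemma: for any two basic sets, one has
$$U((\alpha_1,\beta_1),\dots,(\alpha_k,\beta_k))\cap U'((\alpha'_1,\beta'_1),\dots,(\alpha'_{k'},\beta'_{k'}))=(U\cap U')((\alpha_1,\beta_1),\dots,(\alpha'_{k'},\beta'_{k'})).$$
All that remains to check is that the right-hand side is itself a legitimate basic set, i.e.\ that the defining hypotheses still hold on $U\cap U'$: each $\beta_i,\beta'_j$ is hyperbolic, and the characteristic sets of each pair have nondegenerate intersection throughout $U\cap U'$. This is automatic because these conditions were imposed on all of $U$ (respectively all of $U'$), and $U\cap U'\subseteq U, U'$; and $U\cap U'$ is open in $\overline{CV_N}$. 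Thus, given $T$ in the intersection of two basic sets, that intersection itself is a basic set containing $T$, giving the basis axiom.

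There is no real obstacle here: the argument is a purely formal verification that the proposed family is closed under pairwise intersection and covers the space, and both facts are already essentially contained in the paragraph preceding the statement.
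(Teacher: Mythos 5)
Your proposal is correct and follows the same route as the paper, which simply records the intersection formula and concludes; you have merely filled in the routine details (the covering axiom via the empty collection of pairs, and the check that the hypotheses on the $\beta_i$, $\beta'_j$ persist on $U\cap U'$).
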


From now on, we will equip $\widehat{CV_N}$ with the topology generated by these sets. Since trees in $CV_N$ have trivial arc stabilizers, there is an inclusion map $\iota:CV_N\hookrightarrow \widehat{CV_N}$. 

\begin{lemma}\label{embed}
The map $\iota:CV_N\hookrightarrow \widehat{CV_N}$ is a topological embedding. 
\end{lemma}

\begin{proof}
It follows from the definition of the Gromov--Hausdorff topology that if $T\in CV_N$, and if $(\alpha_1,\beta_1),\dots,(\alpha_k,\beta_k)$ is a finite set of pairs of elements of $F_N$ such that for all $i\in\{1,\dots,k\}$, the axes of $\alpha_i$ and $\beta_i$ in $T$ have nondegenerate intersection, and their orientations agree on their intersection, then the same remains true in a neighborhood of $T$. Therefore, the topology on $\widehat{CV_N}$ restricts to the Gromov--Hausdorff topology on $CV_N$, which shows that $\iota$ is a topological embedding. 
\end{proof}

For every tree $T\in \overline{CV_N}$, there are only finitely many conjugacy classes of elements of $F_N$ that fix a nondegenerate arc in $T$. Therefore, the map $\pi:\widehat{CV_N}\to\overline{CV_N}$ has finite fibers. The map $\pi$ is continuous for the above topology on $\widehat{CV_N}$ because the $\pi$-preimage of any open set $U\subseteq\overline{CV_N}$ is the open set $U(\emptyset)$ in $\widehat{CV_N}$.

\begin{prop}
The space $\widehat{CV_N}$ is second countable.
\end{prop}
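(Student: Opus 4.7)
The plan is to exploit the second countability of $\overline{CV_N}$ (which is a classical consequence of the fact that $\overline{CV_N}$ is compact and metrizable) together with the countability of $F_N$, and simply enumerate the basic open sets defining the topology on $\widehat{CV_N}$.

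First, fix a countable basis $\mathcal{B}=\{U_n\}_{n\in\mathbb{N}}$ for the topology of $\overline{CV_N}$. Since $F_N$ is countable, the collection of finite tuples $\mathcal{T}:=\bigcup_{k\ge 0}(F_N\times F_N)^k$ is countable. For each pair $(U_n,(\alpha_i,\beta_i)_{1\le i\le k})\in\mathcal{B}\times\mathcal{T}$, say the pair is \emph{admissible} if for every $T\in U_n$ and every $i\in\{1,\dots,k\}$, the element $\beta_i$ is hyperbolic in $T$ and the characteristic sets of $\alpha_i$ and $\beta_i$ have nondegenerate intersection in $T$. For each admissible pair, the set $U_n((\alpha_1,\beta_1),\dots,(\alpha_k,\beta_k))$ is a basic open set in the sense defined above. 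Let $\mathcal{B}'$ be the collection of all such sets; by construction $\mathcal{B}'$ is countable.

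It remains to check that $\mathcal{B}'$ is a basis for the topology of $\widehat{CV_N}$. Let $T\in\widehat{CV_N}$, and let $V((\alpha_1,\beta_1),\dots,(\alpha_k,\beta_k))$ be a basic open neighborhood of $T$, with $V$ open in $\overline{CV_N}$. By definition of admissibility the tuple $((\alpha_i,\beta_i))$ is admissible for $V$. The key point is that admissibility is an \emph{open} condition on $\overline{CV_N}$: hyperbolicity is open (because translation length is a continuous function and $||\beta_i||>0$ on $V$), and as observed in the paragraph preceding Lemma \ref{lemma-sphere}, if the characteristic sets of $\alpha_i$ and $\beta_i$ meet in more than a point in a tree $T'\in\overline{CV_N}$, then the same holds in a whole open neighborhood of $T'$. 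Consequently, we may choose an open neighborhood $V'\subseteq V$ of $\pi(T)$ in $\overline{CV_N}$ (for instance a ball in a metric) and then a basic open set $U_n\in\mathcal{B}$ with $\pi(T)\in U_n\subseteq V'$; the pair $(U_n,(\alpha_i,\beta_i))$ is admissible, so $U_n((\alpha_1,\beta_1),\dots,(\alpha_k,\beta_k))\in\mathcal{B}'$. Since $T$ already satisfies the relative-orientation conditions, and since these conditions are inherited from $V$, we conclude that
\[ T\in U_n((\alpha_1,\beta_1),\dots,(\alpha_k,\beta_k))\subseteq V((\alpha_1,\beta_1),\dots,(\alpha_k,\beta_k)). \]
This exhibits a countable basis for $\widehat{CV_N}$.

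The only genuine content is openness of the admissibility condition; everything else is a bookkeeping argument. I expect no serious obstacle.
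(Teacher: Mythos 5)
Your proof is correct and takes essentially the same approach as the paper: fix a countable basis $(U_i)$ of $\overline{CV_N}$ and form the countable collection of all sets $U_i((\alpha_1,\beta_1),\dots,(\alpha_k,\beta_k))$ for which the hyperbolicity/nondegenerate-intersection constraint holds throughout $U_i$. One small remark: openness of the admissibility conditions is not actually what is doing the work in the basis-refinement step — admissibility on $V$ is automatically inherited by any subset $U_n\subseteq V$, since the conditions are required to hold for \emph{every} tree in the set, so the detour through an intermediate $V'$ and the appeal to openness can be dropped.
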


\begin{proof}
Since $\overline{CV_N}$ is second countable, we can choose a countable basis $(U_i)_{i\in\mathbb{N}}$ of open sets of $\overline{CV_N}$. Then the countable collection of all sets of the form $U_i((\alpha_1,\beta_1),\dots,(\alpha_k,\beta_k))$ (where for all $j\in\{1,\dots,k\}$, the element $\beta_j$ is hyperbolic and the characteristic sets of $\alpha_j$ and $\beta_j$ have nondegenerate intersection in all trees in $U_i$) is a basis of open sets of $\widehat{CV_N}$. 
\end{proof}

\begin{prop}
The space $\widehat{CV_N}$ is Hausdorff. 
\end{prop}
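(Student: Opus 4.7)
The plan is to handle two cases, based on whether $\pi(T)$ and $\pi(T')$ are distinct in $\overline{CV_N}$ or coincide. If $\pi(T) \neq \pi(T')$, then since $\overline{CV_N}$ is known to be Hausdorff and $\pi$ is continuous, I would simply pull back disjoint open neighborhoods of $\pi(T)$ and $\pi(T')$ to obtain disjoint basic open sets (of the form $U(\emptyset)$, with empty collection of pairs) separating $T$ and $T'$ in $\widehat{CV_N}$.

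The substantive case is $\pi(T) = \pi(T') =: \bar{T}$. Since $T \neq T'$, there must exist a non-proper-power element $c \in F_N$ whose characteristic set in $\bar{T}$ is a nondegenerate arc $[x,y]$, and on which $T$ and $T'$ prescribe opposite orientations (hyperbolic elements carry an intrinsic orientation of their axes, so the discrepancy must come from an arc with nontrivial stabilizer). My next step is to produce a hyperbolic element $\beta \in F_N$ whose axis in $\bar{T}$ meets $[x,y]$ in a nondegenerate subarc. For this I rely on the standard fact that any nontrivial minimal $F_N$-tree is the union (not merely the closure) of axes of its hyperbolic elements, which follows from minimality together with the classical observation that two hyperbolic elements with disjoint axes admit a product whose axis bridges them. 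Since $F_N$ is countable while the arc $[x,y]$ is uncountable, at least one axis must meet $[x,y]$ in more than one point; such an intersection, being convex in the $\mathbb{R}$-tree, is automatically a nondegenerate subarc.

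With $\beta$ in hand, the relative orientations of the pair $(c,\beta)$ in $T$ and $T'$ must be opposite, say $+1$ in $T$ and $-1$ in $T'$; equivalently, $(c^{-1}, \beta)$ has relative orientation $+1$ in $T'$. Invoking the openness of the conditions ``$\beta$ is hyperbolic'' and ``$\text{Char}(c) \cap \text{Axis}(\beta)$ is nondegenerate'' (already used in Section~\ref{sec-not-ar}), I obtain an small open neighborhood $U \subseteq \overline{CV_N}$ of $\bar{T}$ on which both hold. The basic open sets $U((c,\beta))$ and $U((c^{-1},\beta))$ are then legitimate, contain $T$ and $T'$ respectively, and are disjoint because a single tree cannot simultaneously assign relative orientation $+1$ to $(c,\beta)$ and $+1$ to $(c^{-1},\beta)$. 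The principal technical point in this plan is securing the hyperbolic element $\beta$ with nondegenerate axis-overlap with $[x,y]$; the remainder is an unpacking of the definition of the topology on $\widehat{CV_N}$.
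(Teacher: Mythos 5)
Your proof is correct and follows the same two-case structure as the paper's: reduce to the case $\pi(T)=\pi(T')$, then separate $T$ and $T'$ using the orientation they assign to some arc $e$ with nontrivial stabilizer $c$. There are two technical differences worth noting. To produce a hyperbolic $\beta$ whose axis nondegenerately meets $e$, the paper picks $g,g'$ with characteristic sets lying in the two components of $\pi(T)$ minus the interior of $e$, and takes $\beta=gg'$, which is hyperbolic with axis crossing $e$ by the usual bridging lemma; your cardinality argument (countably many axes cover the uncountable arc, so some intersection is nondegenerate) is valid, but the ``standard fact'' you invoke --- that a minimal $F_N$-tree equals the union, not merely the closure of the union, of its axes --- is exactly the kind of statement whose proof goes through the same bridging lemma the paper applies directly, so the paper's construction is more self-contained and sidesteps any worry about whether every interior point of $e$ actually lies on an axis. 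For the separating basic open sets, the paper uses $U(c,gg')$ and $U(c,g'g)$, which requires the (true, but not entirely obvious) observation that the axes of $gg'$ and $g'g$ cross $e$ in opposite directions; your choice $U((c,\beta))$ and $U((c^{-1},\beta))$ is a little more transparent, since the orientations assigned to $\text{Char}(c)$ and $\text{Char}(c^{-1})$ are opposite by definition of $\widehat{CV_N}$, so disjointness of the two sets needs no further argument.
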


\begin{proof}
Let $T\neq T'\in\widehat{CV_N}$. If $\pi(T)\neq\pi(T')$, then since
$\overline{CV_N}$ is Hausdorff, we can find disjoint open
neighborhoods $U$ of $\pi(T)$ and $U'$ of $\pi(T')$ in $\overline{CV_N}$, and
these yield disjoint open neighborhoods of $T$ and $T'$ in
$\widehat{CV_N}$. So we can assume that $\pi(T)=\pi(T')$, and there is
an arc $e$ with nontrivial stabilizer $\langle c\rangle$ in $\pi(T)$ whose
orientation is not the same in $T$ and $T'$. Let $h\in F_N$ be an
element which is hyperbolic in $\pi(T)$, whose axis contains the edge $e$. One can then find an open neighborhood $U$
of $\pi(T)$ in $\overline{CV_N}$ such that for all trees $Y\in U$, the element $h$ is hyperbolic in $Y$, and the characteristic sets of $h$ and $c$ have nondegenerate intersection in $Y$. Then exactly one of the points $T,T'\in\widehat{CV_N}$ belongs to
$U(c,h)$, the other belongs to $U(c,h^{-1})$, and $U(c,h)\cap
U(c,h^{-1})=\emptyset$. This shows that $\widehat{CV_N}$ is Hausdorff.
\end{proof}

The following lemma gives a useful criterion for checking that a sequence converges in $\widehat{CV_N}$.

\begin{lemma}\label{proving-cv}
Let $(T_n)_{n\in\mathbb{N}}\in\widehat{CV_N}^{\mathbb{N}}$, and let $T\in\widehat{CV_N}$. Assume that $(\pi(T_n))_{n\in\mathbb{N}}$ converges to $\pi(T)$ in $\overline{CV_N}$, and that for every element $\alpha\in F_N$ fixing a nondegenerate arc in $T$, there exists a hyperbolic element $\beta$ whose axis in $T$ has nondegenerate intersection with $\text{Char}_T(\alpha)$, such that the relative orientations of $(\alpha,\beta)$ eventually agree in $T_n$ and in $T$. Then $(T_n)_{n\in\mathbb{N}}$ converges to $T$.
\end{lemma}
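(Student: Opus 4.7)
The plan is to verify that any basic open neighborhood $U((\alpha_1,\beta_1),\dots,(\alpha_k,\beta_k))$ of $T$ in $\widehat{CV_N}$ eventually contains $T_n$. By the first hypothesis, $\pi(T_n)\in U$ for all $n$ large, and in particular for each $i$, $\beta_i$ is hyperbolic in $T_n$ and $\text{Char}_{T_n}(\alpha_i)\cap\text{Axis}_{T_n}(\beta_i)$ is nondegenerate. What remains is to show, for each $i$, that the relative orientation of $(\alpha_i,\beta_i)$ in $T_n$ eventually equals the one in $T$.

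First I would treat the case where $\alpha_i$ is hyperbolic in $T$. Then both $\text{Char}(\alpha_i)$ and $\text{Axis}(\beta_i)$ carry intrinsic orientations coming from the translation directions, so the relative orientation is determined by the underlying unoriented $F_N$-tree. One then checks that it is a continuous function on the subspace of $\overline{CV_N}$ where both elements are hyperbolic with nondegenerately overlapping axes (it can be read off from translation lengths of suitable products such as $\alpha_i\beta_i$ and $\alpha_i\beta_i^{-1}$), yielding eventual agreement in $T_n$ as $\pi(T_n)\to\pi(T)$.

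When $\alpha_i$ fixes a nondegenerate arc in $T$, I would bridge to the hyperbolic case. Apply hypothesis (2) to $\alpha_i$ to obtain a hyperbolic $\beta_i^0\in F_N$ whose axis in $T$ contains $\text{Char}_T(\alpha_i)$ and such that the relative orientation of $(\alpha_i,\beta_i^0)$ in $T_n$ eventually agrees with that in $T$. Because $\text{Char}_T(\alpha_i)\cap\text{Axis}_T(\beta_i)$ is nondegenerate and contained in $\text{Axis}_T(\beta_i^0)$, the three oriented sets $\text{Char}(\alpha_i)$, $\text{Axis}(\beta_i^0)$, $\text{Axis}(\beta_i)$ share a common nondegenerate subarc of $T$; by Gromov--Hausdorff continuity of characteristic sets, they share such a subarc in $T_n$ for $n$ large as well. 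On any common nondegenerate subarc, relative orientations multiply, so
\[
\sigma_T(\alpha_i,\beta_i)=\sigma_T(\alpha_i,\beta_i^0)\cdot\sigma_T(\beta_i^0,\beta_i),
\]
and the analogous identity holds in $T_n$. The first factor agrees in $T_n$ and $T$ by the choice of $\beta_i^0$, and the second by the previous hyperbolic--hyperbolic case applied to $(\beta_i^0,\beta_i)$. Hence $\sigma_{T_n}(\alpha_i,\beta_i)=\sigma_T(\alpha_i,\beta_i)$ for $n$ large, as required.

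The principal difficulty will be producing the continuity statement for the hyperbolic--hyperbolic case cleanly, i.e.\ exhibiting a manifestly continuous function of $\pi(T)$ that records the relative orientation of two axes overlapping in a nondegenerate segment. One also needs to ensure that the various auxiliary intersections of characteristic sets used above genuinely persist in $T_n$, which is a consequence of standard continuity properties in the equivariant Gromov--Hausdorff topology combined with the openness conditions built into the basic neighborhood $U$.
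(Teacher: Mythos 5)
Your argument is correct and follows essentially the same route as the paper's: reduce to checking agreement of relative orientations on each pair $(\alpha_i,\beta_i)$ defining a basic neighborhood of $T$, dispose of the case where $\alpha_i$ is hyperbolic because there the orientation is intrinsic to $\pi(T)$, and in the arc-stabilizer case bridge through the auxiliary hyperbolic element $\beta_i^0$ supplied by the hypothesis, using that relative orientations of three arcs sharing a nondegenerate subarc compose multiplicatively. Your write-up merely makes explicit the multiplicativity identity that the paper leaves as ``This implies\ldots''.
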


\begin{proof}
 As $(\pi(T_n))_{n\in\mathbb{N}}$ converges to $\pi(T)$, it is enough to prove that given any two elements $\alpha,\beta'\in F_N$ whose characteristic sets in $T$ have nondegenerate intersection, with $\beta'$ hyperbolic in $T$, the relative orientations of $(\alpha,\beta')$ eventually agree in $T_n$ and in $T$. 

This is clearly true if $\alpha$ is also hyperbolic in $T$, so we assume that $\alpha$ fixes a nondegenerate arc in $T$. By hypothesis, there exists $\beta\in F_N$ whose axis in $T$ has nondegenerate intersection with $\text{Char}_T(\alpha)$, such that the relative orientation of $(\alpha,\beta)$ is equal to $1$ both in $T$ and in $T_n$ for $n$ sufficiently large. 

\begin{figure}
\begin{center}
\input{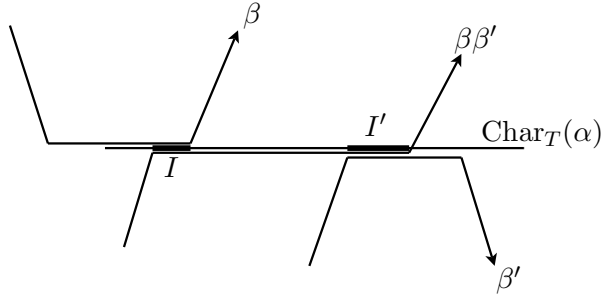}
\caption{Typical position of the characteristic set of $\alpha$ and the axes of $\beta,\beta'$ and $\beta\beta'$ in the proof of Lemma~\ref{proving-cv}.}
\label{fig-beta}
\end{center}
\end{figure}

Since $\beta$ and $\beta'$ are hyperbolic in $T$, they are also both hyperbolic in all trees $T_n$ with $n\in\mathbb{N}$ large enough. A typical situation is depicted in Figure~\ref{fig-beta}. Up to replacing $\beta$ and $\beta'$ by their inverses, we can assume that $\beta\beta'$ is hyperbolic in $T$, and that the characteristic sets of $\alpha$, $\beta$ and $\beta\beta'$ in $T$ contain a common nondegenerate segment $I$, and likewise the characteristic sets of $\alpha$, $\beta'$ and $\beta\beta'$ contain a common nondegenerate segment $I'$. This also holds in $T_n$ for all sufficiently large $n\in\mathbb{N}$ (in particular $\beta\beta'$ is hyperbolic in $T_n$ for all sufficiently large $n\in\mathbb{N}$). Since $\beta$ and $\beta\beta'$ are both hyperbolic, their relative orientation eventually agrees in $T_n$ and in $T$. By hypothesis, so does the relative orientation of $(\alpha,\beta)$. The existence of the segment $I$ thus ensures that the relative orientation of $(\alpha,\beta\beta')$ eventually agrees in $T_n$ and in $T$. Since $\beta'$ and $\beta\beta'$ are both hyperbolic, their relative orientation eventually agrees in $T_n$ and in $T$. The existence of the segment $I'$ thus ensures that the relative orientation of $(\alpha,\beta')$ eventually agrees in $T_n$ and in $T$, as required.
\end{proof}

\begin{prop}
The space $\widehat{CV_N}$ is compact.
\end{prop}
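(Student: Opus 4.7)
The plan is to use sequential compactness, which suffices since $\widehat{CV_N}$ is second countable and Hausdorff (already established). Given a sequence $(T_n)_{n\in\mathbb{N}}$ in $\widehat{CV_N}$, first invoke compactness of $\overline{CV_N}$ (from \cite{CM87}) to extract a subsequence such that $\pi(T_n)\to T_\infty$ in $\overline{CV_N}$. The task then reduces to equipping $T_\infty$ with an $F_N$-equivariant orientation of each nondegenerate arc with nontrivial stabilizer, in such a way that (a subsequence of) $(T_n)$ converges to the resulting point of $\widehat{CV_N}$.

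The construction proceeds by a diagonal extraction. Enumerate the countable set of pairs $(c,\beta)\in F_N\times F_N$, and extract a further subsequence so that for every pair $(c,\beta)$ such that $c$ fixes a nondegenerate arc in $T_\infty$ and $\beta$ is hyperbolic in $T_\infty$ with $\mathrm{Ax}_{T_\infty}(\beta)\supseteq\mathrm{Char}_{T_\infty}(c)$, the relative orientation $\epsilon_n(c,\beta)\in\{+1,-1\}$ is eventually constant in $n$ (note that for large $n$ the characteristic sets of $c$ and $\beta$ in $T_n$ do have nondegenerate intersection, so $\epsilon_n(c,\beta)$ is defined). Now for each non-proper-power $c\in F_N$ fixing a nondegenerate arc $\mathrm{Char}_{T_\infty}(c)$, pick any such auxiliary $\beta_c$ (which exists because one can find a hyperbolic element whose axis spans this arc, using the action on $T_\infty$), and declare the orientation of $\mathrm{Char}_{T_\infty}(c)$ to agree with the natural orientation of $\mathrm{Ax}_{T_\infty}(\beta_c)$ when the eventual value of $\epsilon_n(c,\beta_c)$ is $+1$, and to be opposite otherwise. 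Extend by $F_N$-equivariance to all arcs with nontrivial stabilizer, taking the orientation of $\mathrm{Char}_{T_\infty}(c^{-1})$ to be opposite to that of $\mathrm{Char}_{T_\infty}(c)$.

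The two things that need to be verified are well-definedness (independence from the choice of $\beta_c$, and compatibility with the $F_N$-action and with the relation $c\leftrightarrow c^{-1}$) and convergence $T_n\to T_\infty$ in $\widehat{CV_N}$. Well-definedness is a transitivity check: if $\beta'$ is another admissible auxiliary element for $c$, then $\mathrm{Ax}_{T_\infty}(\beta_c)$ and $\mathrm{Ax}_{T_\infty}(\beta')$ overlap nondegenerately, their relative orientation in $T_\infty$ is the limit of their relative orientations in $T_n$ (because $\pi(T_n)\to T_\infty$ and both elements are hyperbolic), and combining this with the eventual values of $\epsilon_n(c,\beta_c)$ and $\epsilon_n(c,\beta')$ yields the desired equality. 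Equivariance and the $c\leftrightarrow c^{-1}$ condition follow in the same way, applied to $(gcg^{-1},g\beta_cg^{-1})$ and to $\beta_c^{-1}$ respectively. Convergence $T_n\to T_\infty$ then follows immediately from the criterion in Lemma \ref{proving-cv}, since by construction each $c$ fixing a nondegenerate arc in $T_\infty$ admits the witness $\beta_c$ with matching eventual relative orientation.

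The main obstacle will be the well-definedness step: ensuring that the orientation defined on $\mathrm{Char}_{T_\infty}(c)$ through one choice of auxiliary hyperbolic element agrees with the orientation defined through any other, and that these choices can be made $F_N$-equivariantly. This essentially amounts to a bookkeeping argument using that relative orientations of pairs of hyperbolic axes pass to the limit in $\overline{CV_N}$, together with enlarging the countable diagonal to include all the relevant pairs $(gcg^{-1},g\beta_c g^{-1})$ for $g\in F_N$ in advance, which is harmless since $F_N$ is countable.
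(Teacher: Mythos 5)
Your proof is correct and follows essentially the same route as the paper: extract a subsequence converging in $\overline{CV_N}$, stabilize the relative orientations of elliptic/hyperbolic pairs along a further subsequence, use the eventual values to orient the arcs with nontrivial stabilizer in the limit tree, and conclude via Lemma \ref{proving-cv}. The paper streamlines the argument by observing that the limit tree has only finitely many orbits of maximal arcs with nontrivial stabilizer, so only finitely many subsequence extractions are needed (one pair $(\alpha,\beta)$ per orbit, extended equivariantly), which avoids both the diagonal extraction and the well-definedness bookkeeping you carry out.
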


\begin{proof}
Since $\widehat{CV_N}$ is second countable, it is enough to prove
sequential compactness. Let
$(T_n)_{n\in\mathbb{N}}\in{\widehat{CV_N}}^{\mathbb{N}}$. Since
$\overline{CV_N}$ is compact \cite[Theorem 4.5]{CM87}, up to passing
to a subsequence, we can assume that $(\pi(T_n))_{n\in\mathbb{N}}$
converges to a tree $T\in\overline{CV_N}$. 

Let $\alpha\in F_N$ be an element that fixes a nondegenerate arc in $T$, and let $\beta\in F_N$ be a hyperbolic element in $T$, whose axis has nondegenerate intersection with $\text{Char}_T(\alpha)$. Then up to passing to a subsequence, we can assume that the relative orientation of $(\alpha,\beta)$ in $T_n$ is eventually constant, and assign the corresponding orientation to $\text{Char}_T(\alpha)$. If we do this equivariantly for each of the finitely many orbits of maximal arcs with nontrivial stabilizer in $T$, Lemma \ref{proving-cv} ensures that we have found a subsequence of $(T_n)_{n\in\mathbb{N}}$ that
converges to $T$. 
\end{proof}

Being compact, Hausdorff, and second countable, the space
$\widehat{CV_N}$ is metrizable.

\begin{cor}
The space $\widehat{CV_N}$ is metrizable.
\qed
\end{cor}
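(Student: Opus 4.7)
The plan is to invoke Urysohn's metrization theorem, which states that any second countable regular Hausdorff space is metrizable. All three hypotheses are essentially in hand from the propositions just established: second countability was proved explicitly by taking the countable family of sets $U_i((\alpha_1,\beta_1),\dots,(\alpha_k,\beta_k))$ indexed by a countable basis $(U_i)$ of $\overline{CV_N}$ together with finite tuples of pairs of elements of the countable group $F_N$; the Hausdorff property was verified by the argument using the element $gg'$ together with $g'g$; and compactness was extracted from compactness of $\overline{CV_N}$ together with a diagonal choice of relative orientations.

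The only thing left is to pass from Hausdorff to regular. Since $\widehat{CV_N}$ is compact and Hausdorff it is automatically normal, and in particular regular, so Urysohn applies directly. Alternatively, one can quote the classical fact (a corollary of Urysohn) that every second countable compact Hausdorff space is metrizable and skip the normality step entirely.

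I do not expect any serious obstacle here: the corollary is a one-line consequence of the previous three propositions, and the sentence preceding the corollary statement already records exactly this observation. The proof proposal is therefore simply:

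\begin{proof}[Proof proposal]
By the preceding propositions, $\widehat{CV_N}$ is compact, Hausdorff, and second countable. A compact Hausdorff space is normal, hence regular, so Urysohn's metrization theorem applies and yields a metric inducing the given topology.
\end{proof}
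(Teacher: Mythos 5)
Your proof is correct and matches the paper's approach exactly: the paper simply records in the sentence before the corollary that a compact, Hausdorff, second countable space is metrizable, which is the Urysohn argument you spell out (via normality of compact Hausdorff spaces). Nothing more is needed.
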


\begin{prop}
The space $\widehat{CV_N}$ has finite topological dimension equal to $3N-4$.
\end{prop}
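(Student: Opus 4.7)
The plan is to establish the equality $\dim\widehat{CV_N}=3N-4$ by proving both inequalities separately, using the continuous forgetful projection $\pi:\widehat{CV_N}\to\overline{CV_N}$ whose topological properties have already been established.

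For the lower bound $\dim\widehat{CV_N}\ge 3N-4$, I would observe that $CV_N$ sits inside $\widehat{CV_N}$ as an open subset of dimension $3N-4$. Indeed, Proposition \ref{top-embed} shows that the inclusion $CV_N\hookrightarrow\widehat{CV_N}$ is a topological embedding. Since trees in $CV_N$ have trivial arc stabilizers, there is no orientation data to record, so $\pi$ restricts to a bijection over the open subset $CV_N\subseteq\overline{CV_N}$, and thus $CV_N$ is open in $\widehat{CV_N}$. As $\dim CV_N=3N-4$ is classical (it is for instance a consequence of the analogous fact for $\overline{CV_N}$), the lower bound follows from monotonicity of topological dimension under open subspaces.

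For the upper bound, I would appeal to the Hurewicz dimension-raising theorem: a closed continuous surjection $f:X\to Y$ between compact metric spaces with $\dim f^{-1}(y)\le 0$ for every $y\in Y$ satisfies $\dim X\le\dim Y$. The map $\pi$ is continuous between compact metrizable spaces (both properties already verified), hence automatically closed. Its fibers are finite: concretely, $\pi^{-1}(T)$ has cardinality $2^{k(T)}$, where $k(T)$ denotes the number of $F_N$-orbits of maximal arcs with nontrivial stabilizer in $T$. With finite (hence $0$-dimensional) fibers, Hurewicz's theorem yields $\dim\widehat{CV_N}\le\dim\overline{CV_N}=3N-4$, and combining with the lower bound gives the claimed equality.

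The only real point to verify carefully is that $k(T)<\infty$ for every $T\in\overline{CV_N}$. This is where I expect the minor technical work to lie, but it is not a serious obstacle: it follows from the standard structural results on very small $F_N$-trees, which bound the combinatorial complexity of the ``simplicial part'' (the union of arcs with nontrivial stabilizer) of any tree in $\overline{cv_N}$ by a constant depending only on $N$. Once that structural finiteness is cited, the conclusion is immediate from the Hurewicz theorem together with the already-known dimension of $\overline{CV_N}$.
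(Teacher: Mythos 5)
Your argument is essentially identical to the paper's: the upper bound $\dim\widehat{CV_N}\le\dim\overline{CV_N}$ via Hurewicz's theorem applied to the finite-to-one map $\pi$, and the lower bound via the topological embedding $CV_N\hookrightarrow\widehat{CV_N}$ together with $\dim CV_N=\dim\overline{CV_N}=3N-4$. The only additions you make (that $CV_N$ is open in $\widehat{CV_N}$, and a remark about verifying finiteness of $k(T)$) are correct but not needed beyond what the paper writes, since dimension is already monotone under arbitrary subspaces of separable metric spaces.
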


\begin{proof}
The map $\pi:\widehat{CV_N}\to\overline{CV_N}$ is a continuous map
between compact metrizable spaces, with finite point preimages. It
follows from Hurewicz's theorem (see \cite[Theorem~4.3.6]{Eng78}) that
$\text{dim}(\widehat{CV_N})\le\text{dim}(\overline{CV_N})$. In
addition, the space $\widehat{CV_N}$ contains $CV_N$ as a
topologically embedded subspace (Lemma~\ref{embed}). As
$\text{dim}(\overline{CV_N})=\text{dim}(CV_N)=3N-4$ (see
\cite{BF94,GL95}), the result follows.
\end{proof}

Corollary~\ref{is-closure} below will show in addition that $CV_N$ is dense in $\widehat{CV_N}$, so $\widehat{CV_N}$ is a compactification of $CV_N$. The
$Out(F_N)$-action clearly extends to a continuous action on this compactification.

\section{The space $\widehat{CV_N}$ is an AR, and the boundary is a $Z$-set.}\label{sec-ar}

The goal of the present section is to prove Theorem 2 from the
introduction. 

\subsection{General strategy}

We first explain the general strategy of our proof. Let $X$ be a topological space, let $Z\subset X$ be a closed subspace, and let
$z\in Z$. We say that $(X,Z)$ is {\it locally complementarily
  contractible (LCC) at $z$} if for every neighborhood $\mathcal U$ of
$z$ in $X$ there is a smaller neighborhood $\mathcal V$ of $z$ in $X$
such that the inclusion $\mathcal V\smallsetminus Z\hookrightarrow
\mathcal U\smallsetminus Z$ is nullhomotopic. If this is true for
every $z\in Z$ we say that $(X,Z)$ is {\it LCC}. 

\begin{rk}\label{lcc-dense}
Notice that if $(X,Z)$ is LCC, then $X\smallsetminus Z$ is dense in $X$, as otherwise we would be able to find a point $z\in Z$ and an open neighborhood $\calv$ of $z$ in $X$ such that $\calv\smallsetminus Z=\emptyset$, and the map $\emptyset\to\emptyset$ is not nullhomotopic by convention. 
\end{rk}

To detect that
$\widehat{CV_N}$ is an AR we will use the criterion in the appendix,
Theorem~\ref{miracle}. We will apply it with $X=\widehat{CV_N}$ and
$Z=X\smallsetminus CV_N$. We are thus reduced to proving that $(X,Z)$
is LCC at every $T_0\in Z$. We will actually work in the unprojectivized version $\widehat{cv_N}$ and prove the following statement.

\begin{theo}\label{cv-lcc}
For all $N\ge 1$, the pair $(\widehat{cv_N},\widehat{cv_N}\smallsetminus cv_N)$ is LCC.
\end{theo}

In view of Remark~\ref{lcc-dense}, we deduce the following result.

\begin{cor}\label{is-closure}
For all $N\ge 1$, every point in $\widehat{cv_N}$ is a limit of points in $cv_N$.
\qed
\end{cor}

\begin{proof}[Proof of Theorem~2 from Theorem~\ref{cv-lcc}]
Since $CV_N$ is contractible and locally contractible, and since $\widehat{CV_N}$ is compact, metrizable and finite-dimensional, in view of Theorem~\ref{miracle}, it is enough to show that $(\widehat{CV_N},\widehat{CV_N}\smallsetminus CV_N)$ is LCC. 

Denote by $\ast$ the trivial tree in $\widehat{cv_N}$. We claim that there exists a continuous lift $\phi:\widehat{CV_N}\to\widehat{cv_N}$ of the projection map $\psi:\widehat{cv_N}\smallsetminus\{\ast\}\to\widehat{CV_N}$. Indeed, by \cite[I.6.5, Corollaire 2]{Ser}, there exists a finite set $\{g_1,\dots,g_k\}$ of elements of $F_N$ such  that for every nontrivial tree $T\in\widehat{cv_N}$, there exists $i\in\{1,\dots,k\}$ with $||g_i||_T>0$. The lift $\phi$ is then defined by sending a point $[T]\in\widehat{CV_N}$ to the unique representative for which $\sum_{i=1}^k||g_i||_T=1$.

 Let $[T]\in\widehat{CV_N}$, and let $T$ be a lift to $\widehat{cv_N}$. Let $\calu_{[T]}$ be a neighborhood of $[T]$ in $\widehat{CV_N}$, and let $\calu_T$ be the full preimage of $\calu_{[T]}$ in $\widehat{cv_N}$, which is a neighborhood of $T$ in $\widehat{cv_N}$. 
Since $(\widehat{cv_N},\widehat{cv_N}\smallsetminus cv_N)$ is LCC at $T$, there exists a neighborhood $\calv_T\subseteq\calu_T$ of $T$ in $\widehat{cv_N}$ such that the inclusion $\calv_T\cap cv_n\subseteq\calu_T\cap cv_N$ is nullhomotopic; we denote by $H:(\calv_T\cap cv_N)\times [0,1]\to\calu_T\cap cv_N$ a homotopy from $\calv_T\cap cv_N$ to a point. 
Let $\calv_{[T]}$ be a neighborhood of $[T]$ in $\widehat{CV_N}$ such that $\phi(\calv_{[T]})\subseteq\calv_T$. Then $\psi\circ H\circ\phi$ is a homotopy from $\calv_{[T]}\cap CV_N$ to a point that stays inside $\calu_{[T]}\cap CV_N$. This shows that $(\widehat{CV_N},\widehat{CV_N}\smallsetminus CV_N)$ is LCC, as claimed. 
\end{proof}

We now explain our proof of Theorem~\ref{cv-lcc}; Theorem~\ref{strategy} below summarizes the
strategy. In order to state it, we extend the notion of morphisms to $\widehat{cv_N}$ as follows: a \emph{morphism} between two
trees $T,T'\in\widehat{cv_N}$ is a morphism between $\pi(T)$ and
$\pi(T')$ which is further assumed to be isometric and
orientation-preserving when restricted to every edge with nontrivial
stabilizer.

\begin{theo}\label{strategy}
  Let $T_0\in\widehat{cv_N}$ be a nontrivial tree and let $\mathcal U$
  be a neighborhood of $T_0$. Then there is a tree $U\in\mathcal U$
  and an optimal morphism $f:U\to T_0$ such that
  \begin{enumerate}
    \item [(1)] $U$ splits as a graph of actions over a 1-edge
      free splitting $S$ of $F_N$.
  \end{enumerate}
  Further, there is a subset $X_{S,U}\subseteq\widehat{cv_N}$
  that
  contains $U$, a continuous map $\rho:\widehat{cv_N}\to X_{S,U}$, and a homotopy
  $H:\widehat{cv_N}\times [0,1]\to \widehat{cv_N}$ between the identity and $\rho$
  such that
  \begin{enumerate}
  \item [(2)] $\rho(T_0)=U$,
  \item [(3)] $H(\{T_0\}\times [0,1])\subseteq \mathcal U$,
  \item [(4)] $H(cv_N\times [0,1])\subseteq cv_N$, and
  \item [(5)] Assume that the pair $(\widehat{cv_k},\widehat{cv_k}\smallsetminus cv_k)$ is LCC for all $k<N$. If $U\not\in cv_N$
        then $(X_{S,U},X_{S,U}\smallsetminus {cv_N})$ is LCC at $U$.
  \end{enumerate}
\end{theo}

The fact that every tree in $\widehat{cv_N}$ can be approximated by
trees that split as graphs of actions over free splittings
(Property~(1)) will be explained in Section~\ref{sec-approx-split}; it
relies on classical approximation arguments, using the Rips
machine. The construction of $X_{S,U}$ and of the maps $\rho$ and $H$
will be carried out in Section~\ref{sec-construction}. Roughly
speaking, the set $X_{S,U}$ will consist of trees that split as a
graph of actions over the one-edge free splitting $S$, but the details
of the definition will depend on $U$. To prove Property~(5), we will
take advantage of this splitting, which
will allow for an inductive argument.

\begin{proof}[Proof of Theorem~\ref{cv-lcc} assuming Theorem \ref{strategy}]
  The proof is by induction on $N$. The statement is obvious when $N=1$, so we assume that $N\ge 2$. 
  
 First, we observe that the pair
 $(\widehat{cv_N},\widehat{cv_N}\smallsetminus cv_N)$ is LCC at the
 trivial tree $\ast$. Indeed, by \cite[I.6.5, Corollary~2]{Ser}, we can find a finite set $\{\gamma_1,\dots,\gamma_k\}$ of elements of $F_N$ such that for every nontrivial tree $T\in\widehat{cv_N}$, there exists $i\in\{1,\dots,k\}$ such that $||\gamma_i||_T>0$. Then any open
 set $\mathcal U$ that contains $*$ contains an open set of the form
 $$\mathcal V=\{T\in\widehat{cv_N}\mid \sum_{i=1}^k||\gamma_i||_T<\epsilon\}$$
 for some $\epsilon>0$. By scaling, $\mathcal V\cap cv_N$ deformation
 retracts to the slice
 $$\{T\in cv_N\mid \sum_{i=1}^k||\gamma_i||_T=\epsilon/2\}$$
 which is homeomorphic to $CV_N$ and hence contractible. This shows
 that $\mathcal V\cap cv_N\hookrightarrow \mathcal U\cap cv_N$ is
 nullhomotopic. 
  
  Let now $T_0\in \widehat{cv_N}$ be a nontrivial tree and let
  $\mathcal U$ be a neighborhood of $T_0$. Let $U, X_{S,U},\rho, H$ be
  as in Theorem \ref{strategy}. There are now two cases, depending whether $U$ belongs to $cv_N$ or to the boundary. 
  
  If $U\in cv_N$, choose a contractible neighborhood $\mathcal W$ of $U$ in
  $cv_N\cap\mathcal U$. By the continuity of $\rho$ and $H$ and properties (2)
  and (3), there is a
  neighborhood $\mathcal V\subset\mathcal U$ of $T_0$ such that
  $H(\mathcal V\times [0,1])\subseteq\mathcal U$ and
  $\rho(\mathcal V)\subseteq\mathcal W$. By Property~(4), we have $H((\mathcal V\cap cv_N)\times [0,1])\subseteq\mathcal U\cap cv_N$, so $H$ homotopes $\calv\cap cv_N$ into $\calw$, staying within $\calu\cap cv_N$. As $\calw$ is contractible, this shows that the inclusion $\mathcal V\cap cv_N\hookrightarrow \mathcal U\cap cv_N$
  is nullhomotopic.

  Now suppose $U\not\in cv_N$.  By induction we may assume that the pair $(\widehat{cv_k},\widehat{cv_k}\smallsetminus cv_k)$ is LCC for all $k<N$.
  Therefore by Property~(5), there is a neighborhood $\mathcal W\subset\mathcal
  U\cap X_{S,U}$ of $U$ in $X_{S,U}$ such that $\mathcal W\cap cv_N\hookrightarrow \mathcal
  U\cap cv_N$ is nullhomotopic. By continuity of $\rho$ and $H$, there is a
  neighborhood $\mathcal V$ of $T_0$ such that $H(\mathcal V\times
  [0,1])\subseteq \mathcal U$ and $\rho(\mathcal V)\subseteq\mathcal
  W$, and by Property~(4) we have $H((\calv\cap cv_N)\times [0,1])\subseteq\calu\cap cv_N$. Therefore $H$ homotopes $\calv\cap cv_N$ into $\calw\cap cv_N$, staying within $\calu\cap cv_N$. As $\mathcal W\cap cv_N\hookrightarrow \mathcal
  U\cap cv_N$ is nullhomotopic, we deduce that the inclusion $\calv\cap cv_N\hookrightarrow\calu\cap cv_N$ is nullhomotopic.
\end{proof}

\subsection{Morphisms between trees in $\widehat{cv_N}$ and extension of the semi-flow}\label{sec-mor}

We recall from the previous section that a \emph{morphism} between two
trees $T,T'\in\widehat{cv_N}$ is a morphism between $\pi(T)$ and
$\pi(T')$ which is further assumed to be isometric and
orientation-preserving when restricted to every edge with nontrivial stabilizer. 
A morphism between two trees $T,T'\in\widehat{cv_N}$ is \emph{optimal} if the corresponding morphism from $\pi(T)$ to $\pi(T')$ is optimal. The space $\text{Mor}(\widehat{cv_N})$ of all
morphisms between trees in $\widehat{cv_N}$ is topologized by saying
that two morphisms are close whenever the corresponding morphisms
between the projections of the trees in $\overline{cv_N}$ are close,
and in addition the sources and ranges of the morphisms are close in
$\widehat{cv_N}$. As before, this space is second-countable. 
We note that the canonical map $\text{Mor}(\widehat{cv_N})\to
\text{Mor}(\overline{cv_N})$ is bounded-to-one, and it is injective
when restricted to morphisms between two fixed elements of $\widehat{cv_N}$. We denote by $\text{Opt}(\widehat{cv_N})$ the space of all optimal
morphisms between trees in $\widehat{cv_N}$. We now extend Proposition \ref{factor} to morphisms between trees in $\widehat{cv_N}$.

\begin{prop}\label{factor-morphisms}
There exist continuous maps $\widehat{H}:\text{Opt}(\widehat{cv_N})\times [0,1]\to \widehat{cv_N}$ and $\widehat\Phi,\widehat\Psi:\text{Opt}(\widehat{cv_N})\times [0,1]\to\text{Opt}(\widehat{cv_N})$ such that
\begin{itemize}
\item for all $f\in\text{Opt}(\widehat{cv_N})$ and all $t\in [0,1]$, the tree $\widehat{H}(f,t)$ is the range of the morphism $\widehat\Phi(f,t)$ and the source of the morphism $\widehat{\Psi}(f,t)$,
\item for all $f\in\text{Opt}(\widehat{cv_N})$, we have $\widehat\Phi(f,0)=id$ and $\widehat\Psi(f,0)=f$, 
\item for all $f\in\text{Opt}(\widehat{cv_N})$, we have $\widehat\Phi(f,1)=f$ and $\widehat\Psi(f,1)=id$, and
\item for all $f\in\text{Opt}(\widehat{cv_N})$ and all $t\in [0,1]$, we have $\widehat\Psi(f,t)\circ\widehat\Phi(f,t)=f$.
\end{itemize}
\end{prop} 

\begin{proof}
Let $f\in\text{Opt}(\widehat{cv_N})$ be a morphism with source $S$ and range $T$. By Proposition~\ref{factor}, the corresponding morphism from $\pi(S)$ to $\pi(T)$ factors through trees $H(f,t)$.  Notice that any morphism satisfies the hypothesis from Lemma~\ref{id-time}, i.e.\ $f$ is isometric when restricted to any arc of $\pi(T)$ with trivial stabilizer. By Lemma~\ref{id-time}, the induced morphism from $H(f,t)$ to $\pi(T)$ is isometric when restricted to any arc of $H(f,t)$ with nontrivial stabilizer. This enables us to define $\widehat{H}(f,t)$ for all $(f,t)\in\text{Opt}(\widehat{cv_N})\times [0,1]$, by pulling back the orientations on the edges of $T$ in all trees $H(f,t)$. We also get morphisms $\widehat\Phi(f,t)$ and $\widehat\Psi(f,t)$. We will check that the map $\widehat{H}$ defined in this way is continuous, from which it follows that $\widehat\Phi$ and $\widehat\Psi$ are also continuous. Since $\text{Opt}(\widehat{cv_N})$ is second-countable, it is enough to show sequential continuity.

Let $(f,t)\in\text{Opt}(\widehat{cv_N})\times [0,1]$, and let $((f_n,t_n))_{n\in\mathbb{N}}\in (\text{Opt}(\widehat{cv_N})\times [0,1])^{\mathbb{N}}$ be a sequence that converges to $(f,t)$. By Proposition \ref{factor}, the sequence $(\pi(\widehat{H}(f_n,t_n)))_{n\in\mathbb{N}}$ converges to $\pi(\widehat{H}(f,t))$. Let $\alpha\in F_N$ be an element that fixes a nondegenerate arc $e=[u,v]$ in $\widehat{H}(f,t)$. We denote by $l(e)$ the length of $e$. Then $\alpha$ also fixes a nondegenerate arc in the range $T$ of $f$. Let $C:=BBT(f)$ be the bounded backtracking constant of $f$ (whose definition is recalled in the paragraph preceding Lemma~\ref{BBT}). Recall from Lemma~\ref{BBT} that $BBT(\Psi(f,t))\le BBT(f)$ for all $t\in [0,1]$. 

We observe that there exists a legal segment $I$ of length larger than $2BBT(f)+l(e)$, centered at the midpoint of $e$. Indeed, every segment contained in one of the subtrees with dense orbits of the Levitt decomposition is legal (see e.g.\ \cite[Lemma~2.2]{BGH}), and since $\Psi(f,t)$ is optimal, every legal segment ending at a vertex of $\pi(\widehat{H}(f,t))$ can be enlarged to a longer legal segment. We can then find an element $\beta\in F_N$ which is hyperbolic in $\widehat{H}(f,t)$ and whose axis contains $I$ (this is obtained by multiplying two hyperbolic elements whose axes are disjoint, and such that the bridge between their axes contains $I$). This is illustrated in Figure~\ref{fig-bbt}. 

\begin{figure}
\begin{center}
\def\JPicScale{.8}
\input{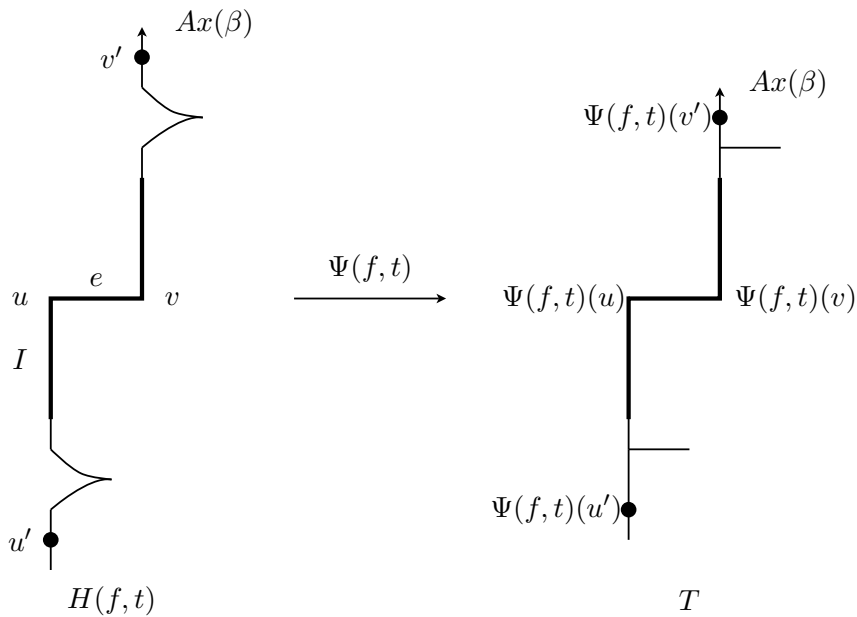}
\caption{The situation in the proof of Proposition~\ref{factor-morphisms}.}
\label{fig-bbt}
\end{center}
\end{figure}

 We claim that $\beta$ is hyperbolic in $T$, that its axis crosses the image of $e$, and that the relative orientations of $(\alpha,\beta)$ are the same in $\widehat{H}(f,t)$ and in $T$. To prove this, we first observe that there is no point $u'$ on the axis of $\beta$ in $H(f,t)$ which has the same image as $u$ in $T$: otherwise, our choice of $I$ would imply that the image of $[u,u']$ in $T$ contains a point at distance larger than $BBT(f)$ from $\Psi(f,t)(u)=\Psi(f,t)(u')$, contradicting the definition of the BBT. Similarly, there is no point $v'$ on the axis of $\beta$ in $H(f,t)$ which has the same image as $v$ in $T$. In particular, if $u'$ and $v'$ are two points lying on the axis of $\beta$ in $H(f,t)$, sufficiently far from $u$ and $v$, and such that $u',u,v,v'$ are aligned in this order in $H(f,t)$, then their images in $T$ are also aligned in this order. Using this observation, we can thus find three points of the form $u,\beta u$ and $\beta^2u$ lying on the axis of $\beta$ in $H(f,t)$, whose images in $T$ are still aligned in this order. This implies that $\beta$ is hyperbolic in $T$, and that the axis of $\beta$ in $T$ crosses the image of $e$, and the relative orientations of $(\alpha,\beta)$ in $\widehat{H}(f,t)$ and in $T$ are the same, as claimed. 

For all $n\in\mathbb{N}$, let $T_n$ be the range of the morphism $f_n$. Since $(T_n)_{n\in\mathbb{N}}$ converges to $T$ in $\widehat{cv_N}$, the relative orientations of $(\alpha,\beta)$ in $T_n$ and in $T$ eventually agree. Since the characteristic sets of $\alpha$ and $\beta$ have nondegenerate overlap in all trees $\widehat{H}(f,t')$ with $t'\ge t$, a compactness argument shows that for $n\in\mathbb{N}$ large enough, the characteristic sets of $\alpha$ and $\beta$ have nondegenerate overlap in all trees $\widehat{H}(f_n,t')$ with $t'\ge t_n$. Therefore, the relative orientation of $(\alpha,\beta)$ cannot change along the path from $\widehat{H}(f_n,t_n)$ to $T_n$, so it is the same in $\widehat{H}(f_n,t_n)$ and in $T_n$. This implies that the relative orientations of $(\alpha,\beta)$ eventually agree in $\widehat{H}(f_n,t_n)$ and in $\widehat{H}(f,t)$. Lemma \ref{proving-cv} then shows that $\widehat{H}(f_n,t_n)$ converges to $\widehat{H}(f,t)$. 
\end{proof}

\subsection{Approximations by trees that split over free splittings}\label{sec-approx-split}

The following lemma is a version of Lemma \ref{fold} for $\widehat{cv_N}$, which easily follows from the version in $\overline{cv_N}$.

\begin{lemma}\label{fold-2}
Let $T\in\widehat{cv_N}$, and let $\calu$ be an open neighborhood of $T$ in $\widehat{cv_N}$. Then there exists an open neighborhood $\calw\subseteq\calu$ of $T$ in $\widehat{cv_N}$ such that if $U\in\calw$ is a tree that admits a morphism $f$ onto $T$, and if $U'\in\widehat{cv_N}$ is a tree such that $f$ factors through morphisms from $U$ to $U'$ and from $U'$ to $T$, then $U'\in\calu$.
\qed
\end{lemma}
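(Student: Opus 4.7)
The plan is to reduce the statement to Lemma~\ref{fold} via the forgetful projection $\pi:\widehat{cv_N}\to\overline{cv_N}$, using that every morphism is $1$-Lipschitz, and then to verify an extra orientation condition on basic open neighborhoods.

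First, I choose a basic open neighborhood $V((\alpha_1,\beta_1),\dots,(\alpha_k,\beta_k))\subseteq\calu$ of $T$; such sets form a basis for the topology on $\widehat{cv_N}$, and replacing some $\beta_i$ by $\beta_i\m$ if necessary, I may assume that the relative orientation of each pair in $T$ equals $+1$. Applying Lemma~\ref{fold} to $V\subseteq\overline{cv_N}$ with any $\epsilon>0$ produces a smaller neighborhood $V'\subseteq V$ of $\pi(T)$ with the expected factorization property for $(1+\epsilon)$-Lipschitz maps, which applies to our setting since morphisms are $1$-Lipschitz. I then set $\calw:=V'((\alpha_1,\beta_1),\dots,(\alpha_k,\beta_k))$, an open neighborhood of $T$ contained in $\calu$.

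Given $U\in\calw$ and a morphism $f:U\to T$ factoring through morphisms $U\to U'\to T$ in $\widehat{cv_N}$, Lemma~\ref{fold} yields $\pi(U')\in V$, so the only thing left to check is that for each $i$ the relative orientation of $(\alpha_i,\beta_i)$ equals $+1$ in $U'$. I split into two cases according to the behaviour of $\alpha_i$. If $\alpha_i$ is hyperbolic in $U'$, then by monotonicity of translation length along morphisms it is already hyperbolic in $U$; the axes of $\alpha_i$ and of the hyperbolic element $\beta_i$ are both oriented by the $F_N$-action, so the relative orientation is group-theoretic and therefore coincides in $U$ and $U'$, hence equals $+1$. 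If instead $\alpha_i$ is elliptic in $U'$ with nondegenerate fixed arc, then by the same monotonicity $\alpha_i$ is also elliptic in $T$, and the morphism $U'\to T$ is isometric and orientation-preserving on this fixed arc by the very definition of a morphism in $\widehat{cv_N}$; combined with $F_N$-equivariance on the axis of $\beta_i$, this forces the relative orientation of $(\alpha_i,\beta_i)$ in $U'$ to match the value it has in $T$, which is $+1$.

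The only (modest) subtlety is keeping track of which piece of orientation data is rigid along a given morphism: on the axis of a hyperbolic element it is determined by the group action, and on a fixed arc in the source of a $\widehat{cv_N}$-morphism it is preserved by definition. Translation length monotonicity, applied separately to $U\to U'$ and to $U'\to T$, is precisely what lets one decide whether to transport the $+1$ condition from $U$ or from $T$ in each case.
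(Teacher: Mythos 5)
Your overall strategy --- reduce the metric part of the statement to Lemma~\ref{fold} via the forgetful map $\pi$, and then check the finitely many relative-orientation conditions defining a basic open set --- is the natural one and is what the paper presumably has in mind when it says the lemma ``easily follows.'' However, the step where you verify the orientation conditions contains two genuine gaps.

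In the hyperbolic case you assert that ``the relative orientation is group-theoretic and therefore coincides in $U$ and $U'$.'' This is false. Each axis carries a canonical orientation coming from the group action, but the sign of the overlap is a property of the tree, not of the pair $(\alpha_i,\beta_i)$; it can and does change across $\overline{cv_N}$ (the whole purpose of $\widehat{cv_N}$ is that this sign is not well-defined at trees where $\alpha_i$ becomes elliptic). What you would actually need is that a morphism with nondegenerate overlap in both source and target preserves the sign, which is not immediate, and moreover your case split does not address the subcase where $\alpha_i$ is hyperbolic in $U$ and $U'$ but elliptic in $T$, where the sign in $T$ involves the prescribed orientation rather than the group action.

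In the elliptic case, ``combined with $F_N$-equivariance on the axis of $\beta_i$, this forces the relative orientation \dots\ to match'' is not an argument. The morphism $h:U'\to T$ is isometric and orientation-preserving on $\mathrm{Char}_{U'}(\alpha_i)$, but nothing forces $h$ to map the overlap $\mathrm{Char}_{U'}(\alpha_i)\cap A_{\beta_i}^{U'}$ into $A_{\beta_i}^T$, nor to preserve the local $\beta_i$-direction at a point of that overlap: $h$ need not be isometric along $A_{\beta_i}^{U'}$, and the image of $[z,\beta_i z]$ could \emph{a priori} backtrack through $h(z)$, flipping the direction of the geodesic $[h(z),\beta_i h(z)]$. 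This is exactly the subtlety that the proof of Proposition~\ref{factor-morphisms} sidesteps by \emph{choosing} $\beta$ so that no fold involves both the fixed arc and a subsegment of $A_\beta$ --- a freedom you do not have here, since the $\beta_i$ are given as part of $\calu$. Some additional argument (e.g.\ shrinking $\calw$ so that everything stays $\epsilon$-close in translation length, or routing the orientation comparison through $T$ more carefully) is needed to close this.
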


A \emph{one-edge free splitting} of $F_N$ is the Bass--Serre tree of a graph of groups decomposition of $F_N$, either as a free product $F_N=A\ast B$, or as an HNN extension $F_N=A\ast$. If $T\in\widehat{cv_N}$ is a tree that splits as a graph of actions over a splitting $S$ of $F_N$, we say that an attaching point of a vertex action $T_v$ is \emph{admissible} if either it belongs to the $G_v$-minimal subtree of $T_v$, or else it is an endpoint of an arc with nontrivial stabilizer contained in $G_v$. The following proposition extends the analogous result for $\overline{cv_N}$ (see \cite[Theorems~3.6 and~3.11]{Hor14-1}).

\begin{prop}\label{approx}
Let $T\in\widehat{cv_N}$, and let $\calu$ be an open neighborhood of
$T$. Then there exists a tree $U\in\calu$ that splits as a graph of actions over a one-edge free splitting $S$ of $F_N$ with admissible attaching points, 
coming with an
optimal morphism $f:U\to T$ such that  
\begin{itemize}
\item every arc with nontrivial stabilizer in $T$ is the $f$-image of an arc with nontrivial stabilizer in $U$, 
\item the $f$-preimage of any arc with nontrivial stabilizer in $T$ consists of a single arc with nontrivial stabilizer in $U$, 
\item $f$ is an isometry when restricted to the minimal subtree of any vertex action.
\end{itemize}
\end{prop}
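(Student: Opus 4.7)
The plan is to reduce to the analogous approximation result in $\overline{cv_N}$, namely \cite[Theorem 3.11]{Hor14-1}, and then lift the approximating tree to $\widehat{cv_N}$ by transporting the orientations on arcs with nontrivial stabilizer from $T$ along the morphism. Without loss of generality, $\calu$ contains a basic open set $V((\alpha_1,\beta_1),\dots,(\alpha_k,\beta_k))$ with $V\subseteq\overline{cv_N}$ an open neighborhood of $\pi(T)$; after shrinking $V$ if necessary, I may assume that in every tree of $V$ each pair $(\alpha_i,\beta_i)$ has characteristic sets with nondegenerate intersection.

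First, applying \cite[Theorem 3.11]{Hor14-1} to $\pi(T)\in V$ produces a tree $\overline{U}\in V$ that splits as a graph of actions over a one-edge free splitting $S$ of $F_N$ with admissible attaching points, together with an optimal morphism $\bar{f}:\overline{U}\to\pi(T)$ such that every arc with nontrivial stabilizer in $\pi(T)$ is the $\bar{f}$-image of an arc with nontrivial stabilizer in $\overline{U}$, and such that $\bar{f}$ is an isometry on the minimal subtree of each vertex action. Since $S$ is a free splitting, every arc with nontrivial stabilizer in $\overline{U}$ is contained in the minimal subtree of some vertex action, so $\bar{f}$ maps it isometrically onto a nondegenerate arc in $\pi(T)$ fixed by the same element.

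Second, I promote $\overline{U}$ to a lift $U\in\widehat{cv_N}$ with $\pi(U)=\overline{U}$ as follows. For each $F_N$-orbit of arcs with nontrivial stabilizer in $\overline{U}$, pick a representative $e$ with stabilizer $\grp{c}$ for some non-proper-power $c\in F_N$; orient $e$ so that $\bar{f}|_e$ is an orientation-preserving isometry onto $\bar{f}(e)\subseteq T$, where the latter is oriented by $T$, and extend $F_N$-equivariantly over the orbit. Equivariance of $\bar{f}$ together with equivariance of the orientation family on $T$ make this choice well-defined, producing a point $U\in\widehat{cv_N}$; with these orientations, $\bar{f}$ promotes to a morphism $f:U\to T$ in the sense of Section~\ref{sec-mor}, optimal because $\bar{f}$ is.

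It remains to check that $U\in\calu$. Since $\pi(U)=\overline{U}\in V$, I only need to verify that the relative orientation of $(\alpha_i,\beta_i)$ in $U$ equals $+1$ for each $i$. The morphism $f$ is a local isometry along the axis of any hyperbolic element and was constructed to be orientation-preserving on arcs with nontrivial stabilizer, so the relative orientation of $(\alpha_i,\beta_i)$ in $U$ coincides with that in $T$, namely $+1$. The main technical point I expect to encounter is verifying that the arcs with nontrivial stabilizer in the approximating tree $\overline{U}$ supplied by \cite[Theorem 3.11]{Hor14-1} are indeed mapped isometrically by $\bar{f}$ onto arcs in $\pi(T)$, so that the orientation pullback is well-defined; this should follow from the structure of the construction there, together with the fact that $S$ is free and a Lemma~\ref{id-time}-style argument.
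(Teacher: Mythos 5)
Your overall strategy — reduce to the $\overline{cv_N}$ statement and then lift orientations along the morphism — is the right idea, and it is close to what the paper does in spirit. However, there are two concrete gaps.

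First, the reduction is too quick. The paper does not simply cite \cite[Theorem 3.11]{Hor14-1} and lift; it re-runs the classical approximation, and the reason is that several of the stated conclusions require extra work on top of the classical output. In particular the paper's sketch contains a separate factoring step (``By folding within every vertex action $U_A$, the morphism $f$ factors through a tree $U'$ in such a way that the induced morphism from $U'$ to $T$ is an isometry when restricted to the minimal subtree $U_A^{\min}$'') together with a slight fold of the new edge $e$ to make the attaching points admissible, and it then invokes Lemma~\ref{fold-2} to keep the result inside $\calu$. So the ``isometry on minimal subtrees'' and the admissibility conditions are obtained by post-processing the classical approximation, not handed to you by the cited theorem. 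You should not assume the $\overline{cv_N}$ citation already delivers the full package.

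Second, and more substantively, your orientation lift needs $\bar f$ to be injective (in fact isometric) on \emph{every} arc with nontrivial stabilizer in $\overline{U}$, and your justification for this has a flaw: it is not true that every such arc lies in the minimal subtree of a vertex action. When an attaching point is admissible but outside $U_A^{\min}$, the vertex tree $U_A$ contains a ``hanging'' arc $e_A$ from $U_A^{\min}$ to the attaching point, stabilized by $c_A$, which lies outside $U_A^{\min}$; the hypothesis that $\bar f$ is isometric on minimal subtrees says nothing about $e_A$, and a morphism is a priori free to fold or collapse an arc with nontrivial stabilizer (equivariance gives no contradiction, since $c_A$ fixes both endpoints of anything it folds together). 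The paper sidesteps this exactly where it says ``keeping edge stabilizers the same (so that orientations can be assigned in a coherent way in the approximation)'': the approximation is built so that the arcs with nontrivial stabilizer in $U$ correspond, by construction, to those in $T$, rather than this property being extracted afterwards from abstract properties of a morphism. You flagged this as the main technical concern, which is honest, but it is precisely the content that needs to be supplied — a Lemma~\ref{id-time}-style argument alone does not give you injectivity of $\bar f$ on the hanging arcs. Similarly, your final check that the relative orientations $(\alpha_i,\beta_i)$ are preserved leans on ``$f$ is a local isometry along the axis of any hyperbolic element,'' but $f$ can fold axes, so the image of $\mathrm{Ax}_U(\beta_i)$ need not be $\mathrm{Ax}_T(\beta_i)$; what you really want to use is closeness in $\overline{cv_N}$ together with the correct orientation on the lifted arcs, which again brings you back to the unresolved injectivity point.

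If you want to keep the reduction flavor, the fix is to make explicit that the approximating tree is built (as in \cite{BF94,Gui98}, case-split on geometric/nongeometric and on Levitt vs.\ surface components) so that edge stabilizers are preserved, and only then lift the orientations and factor to achieve the isometry-on-minimal-subtree condition — at which point you are essentially reproducing the paper's sketch rather than citing a black box.
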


\begin{proof}
The proof of Proposition
\ref{approx} builds on classical approximation techniques of very
small $F_N$-trees, we present a sketch of the argument. Let $X$ be the
underlying graph of groups of the Levitt decomposition of $T$. We can
assume that all edge stabilizers of $X$ are nontrivial, otherwise the
result is obvious by choosing $U=T$ and $f$ to be the identity. We may
do the same if $T$ is simplicial (with all edge groups nontrivial) as
such trees are graphs of actions with skeleton a 1-edge free
splitting (see \cite{She,Swa} or \cite[Lemma~4.1]{BF94}).
We now assume $T$ is not simplicial.

If some vertex action $T_v$ of the Levitt decomposition is nongeometric (i.e.\ not dual to a measured
foliation on a $2$-complex \cite{LP97}), then one can approximate $T_v$
by a geometric tree $U'_v\in\calu$ that contains a simplicial edge $e$
with trivial stabilizer and admits an optimal
morphism $f_v:U'_v\to T_v$, keeping point stabilizers the same; these induce an optimal morphism $f':U'\to T$. Let $U'$ be the tree defined as a graph of actions over $X$, where $T_v$ is replaced by $U'_v$ (the attaching points are prescribed by the stabilizers because all edge stabilizers in $X$ are nontrivial). The edge $e$ is dual to a one-edge free splitting $S$
of $F_N$, and $U$ splits as a graph of actions over $S$. Up to slightly folding $e$ (and slightly increasing its
length), we can assume that attaching points are admissible.
By folding within every vertex action $A\actson U'_A$ of $U'$, the morphism $f'$ factors through a tree $U$ in such a way that the induced morphism from $U$ to $T$ is an isometry when restricted to the $A$-minimal subtree of $U$. In view of Lemma~\ref{fold-2}, we can also ensure that $U\in\calu$.

We now assume that $T\in\widehat{cv_N}$ is geometric. Every geometric
$F_N$-tree splits as a graph of actions with indecomposable vertex
actions, which are either dual to arational measured foliations on
surfaces, or of Levitt type.

If $T$ contains a Levitt component, we approximate it by free and
simplicial actions by first running the Rips machine, and then cutting
along a little arc transverse to the foliation in a naked band, see
\cite{BF94,Gui98}: this remains true in
$\widehat{cv_N}$ because each of these indecomposable trees has
trivial arc stabilizers. This gives an approximaton of $T$ by a tree $U$ that splits as a graph of actions over a one-edge free splitting of $F_N$, and comes equipped with an optimal morphism onto $T$. We then get the required conditions on $f$ as in the nongeometric case.
 
Finally, one is left with the case where $T$ is geometric, and all its
indecomposable subtrees are dual to measured foliations on surfaces. In this
situation, it follows from \cite[Proposition~5.10]{Hor14-2} that either $T$ splits as a graph of actions over
a one-edge free splitting of $F_N$, or else some indecomposable subtree from the geometric decomposition has an \emph{unused}
boundary component (this means that in the skeleton of the decomposition of $T$ into its indecomposable geometric components, the boundary curve is not attached to any incident edge). In the latter situation, we can again cut along a
little arc with extremity on the unused boundary component, and
transverse to the foliation, to approximate the indecomposable
component. The required condition on $f$ is obtained as above.
\end{proof}

\subsection{Definition of $X_{S,U}$, construction of the deformation, and proof of Theorem \ref{strategy}}\label{sec-construction}

In this section, given $T_0\in\widehat{cv_N}$ and a neighborhood
$\mathcal U$ we will define a deformation $H$ of $\widehat{cv_N}$ onto
a particularly nice set of trees (the set $X_{S,U}$ in Theorem~\ref{strategy}). Here, $S$ will be a 1-edge
free splitting of $F_N$ and all trees in $X_{S,U}$ will split as graphs of
actions over the splitting $S$. The tree $U$, which comes from Proposition~\ref{approx}, will be near $T_0$ and will be the image of $T_0$ under
the deformation.  
The construction of $X_{S,U}$ and the deformation (without Property~(5) from Theorem~\ref{strategy})
work equally well in $\overline{cv_N}$. 

From now on, we fix a tree $T_0\in\widehat{cv_N}$, and an open
neighborhood $\calu$ of $T_0$ in $\widehat{cv_N}$. Let
$\calw\subseteq\calu$ be a smaller neighborhood of $T_0$ provided by
Lemma~\ref{fold-2}, i.e.\ such that whenever a morphism from $U\in\calw$ to $T_0$ factors through a tree $U'$, then $U'\in\calu$. We choose a one-edge free splitting $S$ of $F_N$,
and a tree $U\in\calw$ that splits as a graph of actions over $S$ with
admissible attaching points, provided by Proposition \ref{approx}. In
particular, there is an optimal morphism $f:U\to T_0$ such that for every vertex group $A$ of $S$ with vertex action $U_A$, the restriction of $f$ to the $A$-minimal subtree $U_A^{\min}\subseteq U_A$ is an isometry. 

The description of the
deformation depends on whether the splitting $S$ is $F_N=A\ast B$ or
$F_N=A\ast_1$. We will concentrate on the case of a free product, and
only briefly explain how to adapt to the case of an HNN extension, as
the details differ only in notation. 

\paragraph*{Case of a free product.}

There will be several cases in the construction, depending whether the attaching point
$u_A\in U$ of the vertex action $A\actson U_A$ belongs to $U_A^{\min}$
or not (and similarly for $u_B$). The three possible cases are illustrated in Figure~\ref{fig-cases}.

\begin{figure}
\centering
\input{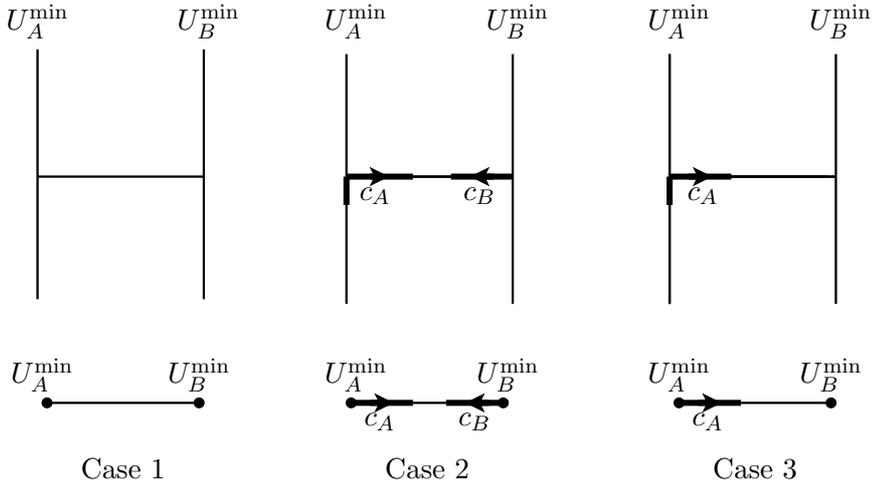}
\caption{The tree $U$ in all three cases when the splitting $S$ is a free product.}
\label{fig-cases}
\end{figure}

~\\

\noindent \textbf{Case 1:} \textit{The free splitting $S$ is the
  Bass--Serre tree of a decomposition of $F_N$ as a free product
  $F_N=A\ast B$, and both $u_A\in U_A^{\min}$ and $u_B\in U_B^{\min}$.}
~\\
\\
\textbf{Case 1.1:} \textit{Both $A$ and $B$ have rank at least $2$.}
\\
\\
Corollary~\ref{basepoint} yields a continuous map
$b_A:\widehat{cv(A)}\to\text{Map}(A,\widehat{cv(A)})$, such that
$b_A(U_A^{\min})(\ast)=u_A$.
We will abuse notation and write
$b_A(T_A)$ for $b_A(T_A)(\ast)$.
Similarly, we have a map $b_B$.
We make the following definition.

\begin{de}\label{def-X}
Under the assumptions of Case 1.1, the set $X_{S,U}\subseteq\widehat{cv_N}$ is the set of all trees $T$ that split as a graph of actions over $S$, with the condition that both vertex actions $T_A$ and $T_B$ are minimal, and the attaching point on the $A$-side (resp.\ on the $B$-side) is $b_A(T_A)$ (resp.\ $b_B(T_B)$).
\end{de}

Notice that we allow the possibility that $T_A$ or $T_B$ (or both) is the trivial tree.

\begin{prop}\label{def-retr}
Let $T_0\in\widehat{cv_N}$, let $\calu$ be an open neighborhood of
$T_0$ in $\widehat{cv_N}$, and let $S,U$ be chosen as above, and
assume Case 1.1. 
\\ Then there exist a continuous map $\rho:\widehat{cv_N}\to X_{S,U}$ with $\rho(T_0)=U$, and a homotopy $H:\widehat{cv_N}\times
[0,1]\to\widehat{cv_N}$ from the identity to $\rho$, such that 
$H(\{T_0\}\times [0,1])\subseteq\mathcal U$ and $H(cv_N\times [0,1])\subseteq
cv_N$.
\end{prop}

We will start by defining $\rho(T)$ for all $T\in\widehat{cv_N}$. Let $T_A$ be the $A$-minimal subtree of $T$, and let $T_B$ be the $B$-minimal subtree of $T$. The tree $\rho(T)$ is defined as the tree that splits as a graph of actions over $S$ with vertex actions $T_A$ and $T_B$, and attaching points $b_A(T_A)$ and $b_B(T_B)$, where the simplicial edge from the splitting is assigned length $d_T(b_A(T_A),b_B(T_B))$. 

Let now $f_T:\rho(T)\to T$ be the morphism which is the identity when restricted to $T_A$ or $T_B$, and sends the simplicial edge linearly onto its image
in $T$. Observe that by construction
we have $\rho(T_0)=U$ and $f_{T_0}=f$.

\begin{lemma}\label{f-cont}
Under the assumptions of Case 1.1, the morphism $f_T:\rho(T)\to T$
depends continuously on the tree $T\in\widehat{cv_N}$.
\end{lemma}

\begin{proof}
We need to establish that the tree $\rho(T)$ depends continuously on
$T$. Once this is established, continuity of $T\mapsto f_T$ follows
from the construction. Continuity of $\pi(\rho(T))\in\overline{cv_N}$
is obvious from the continuity of the choice of basepoints.

Now suppose that $T_n\to T$ and notice that all nontrivial arc
stabilizers in $\rho(T)$ are conjugate into either $A$ or $B$. By
Lemma \ref{proving-cv}, we only need to associate to any element
$\alpha\in F_N$ fixing a nondegenerate arc in $\rho(T)$, an element
$\beta\in F_N$ that is hyperbolic in $\rho(T)$ and whose axis has
nondegenerate intersection with $\text{Char}_T(\alpha)$, such that the
relative orientations of $(\alpha,\beta)$ eventually agree in
$\rho(T_n)$ and in $\rho(T)$. The element $\alpha$ fixes a
nondegenerate arc in either the $A$- or the $B$-minimal subtree of
$T$. As the minimal $A$-invariant subtree of $T_n$ converges (as an $A$-tree in $\widehat{cv(A)}$) to the minimal $A$-invariant subtree of $T$, we are done by choosing $\beta$ to be in $A$ or $B$.
\end{proof}

\begin{proof}[Proof of Proposition \ref{def-retr}]  
For all $T\in\widehat{cv_N}$ and all $t\in [0,1]$, we let
$H(T,t):=\widehat{H}(f_T,1-t)$, with the notation from Proposition
\ref{factor-morphisms}. Since $\rho(T_0)=U$ belongs to $\calw$,
Lemma~\ref{fold-2} implies that $\widehat{H}(f_{T_0},t)\in\calu$ for
all $t\in [0,1]$. In addition, we have $\widehat{H}(f_T,0)=\rho(T)$ and
$\widehat{H}(f_T,1)=T$ for all $T\in\widehat{cv_N}$, and
$\widehat{H}(f_T,t)\in cv_N$ for all $(T,t)\in cv_N\times
[0,1]$ in view of Remark~\ref{rk}. 
\end{proof}

\begin{proof}[Proof of Theorem \ref{strategy} in Case 1.1]
Properties~(1) to~(4) have been established in Proposition~\ref{def-retr}, so there remains to establish Property~(5). Assume that $(\widehat{cv_k},\widehat{cv_k}\smallsetminus cv_k)$ is LCC  for all $k<N$. We will prove that
  $(X_{S,U},X_{S,U}\smallsetminus cv_N)$ is LCC at every point of
  $X_{S,U}\smallsetminus cv_N$.

  A tree $T$ in $X_{S,U}$ is completely
  determined by $T_A,T_B$ and the length of the arc connecting the
  basepoints $b_A(T_A)$ and $b_A(T_B)$. This gives a canonical homeomorphism
  $$X_{S,U}\cong \widehat{cv(A)}\times \widehat{cv(B)}\times
  [0,\infty)$$
  Let $(T_A,T_B,d)$ be a point in the product space and $\mathcal U$ a
  given neighborhood of it. After shrinking $\mathcal U$ we may assume
  it has the form $$\mathcal U=\mathcal U_A\times\mathcal U_B\times
  J$$
  where the three factors are open in their respective spaces and $J$
  is an interval. 
  
 We claim that there exists a  neighborhood $\calv_A\subseteq\calu_A$ of $T_A$ in $\widehat{cv(A)}$ such that the inclusion $\mathcal{V}_A\cap cv(A)\hookrightarrow\calu_A\cap cv(A)$ is nullhomotopic. Indeed, if $T_A\in cv(A)$, this follows from local contractibility of $cv(A)$. If $T_A\notin cv(A)$, this follows from our induction hypothesis which ensures that $(\widehat{cv(A)},\widehat{cv(A)}\smallsetminus cv(A))$ is LCC. Similarly, there exists a neighborhood $\calv_B\subseteq\calu_B$ such that the inclusion $\calv_B\cap cv(B)\hookrightarrow\calu_B\cap cv(B)$ is nullhomotopic. 
 The neighborhood $$\mathcal V= \mathcal V_A\times\mathcal V_B\times
  J$$
  of $T$ is then such 
  that the inclusion $\mathcal V\cap cv_N\hookrightarrow \mathcal U\cap cv_N$ is
  nullhomotopic (notice indeed that $\calv\cap cv_N=(\calv_A\cap cv(A))\times (\calv_B\cap cv(B))\times J$).
  \end{proof}
~\\
\noindent \textbf{Case 1.2:} \textit{We have $\mathrm{rk}(A)\ge 2$ and $\mathrm{rk}(B)=1$.}
\\
\\
In this case, we have a map $b_A$ as in Case~1.1.

\begin{de}\label{def-X-12}
Under the assumptions of Case 1.2, the set $X_{S,U}\subseteq\widehat{cv_N}$ is the set of all trees $T$ that split as a graph of actions over $S$, with the condition that both vertex actions $T_A$ and $T_B$ are minimal, and the attaching point on the $A$-side is $b_A(T_A)$.
\end{de}

Notice that there is no condition on the attaching point on the $B$-side. Indeed, if $T_B$ is reduced to a point, then there is a unique way of attaching. Otherwise it is a line, and the tree $T$ does not depend on a choice of attaching point in $T_B$. 

Using Corollary~\ref{basepoint}, we also get a continuous map
$b_B:\widehat{cv_N}\to\text{Map}(F_N,\widehat{cv_N})$ such that for all
$T\in\widehat{cv_N}$, the range of $b_B(T)$ is $T$, and  
$b_B(T_0)(\ast)=u_B$, and $b_B(T)(\ast)$ is contained in the characteristic set of $B$.

Given $T\in\widehat{cv_N}$, we define $\rho(T)$ as follows. Let $T_A$ be the $A$-minimal subtree of $T$, and let $T_B$ be either the trivial $B$-action on a point if $B$ is elliptic in $T$, or else the $B$-minimal subtree of $T$. The tree $\rho(T)$ is defined as the tree that splits as a graph of actions over $S$ with vertex actions $T_A$ and $T_B$, with attaching point $b_A(T_A)$ on the $A$-side, where the simplicial edge from the splitting is assigned length $d_T(b_A(T_A),b_B(T))$. 

Let now $f_T:\rho(T)\to T$ be the morphism which is the identity when restricted to $T_A$ or $T_B$, and sends the simplicial edge linearly onto the segment $[b_A(T_A),b_B(T)]$. Observe that by construction
we have $\rho(T_0)=U$ and $f_{T_0}=f$.

Then $\rho(T)$ and $f_T$ depend continuously on $T$. The rest of the proof is identical to Case~1.1.
~\\
\\
\noindent\textbf{Case 1.3:} \textit{We have $N=2$, and the free splitting $S$ is the Bass--Serre tree
of a decomposition of $F_N$ as a free product $F_N=A\ast B$ with $\mathrm{rk}(A)=1$ and $\mathrm{rk}(B)=1$.}
\\
\\
The argument is then exactly the same as in Case 1.2 (where both sides are treated as the $B$-side in the argument).

~\\
\\
\noindent \textbf{Case 2:} \textit{The free splitting $S$ is the Bass--Serre tree
of a decomposition of $F_N$ as a free product $F_N=A\ast B$ and
$u_A\not\in U_A^{\min}$, $u_B\not\in U_B^{\min}$.}

~\\

According to Proposition \ref{approx}, $u_A$ is an endpoint of an arc
$I_A\subset U$ with nontrivial stabilizer $\langle c_A\rangle$ with
$c_A\in A$. We choose $c_A$ so that the orientation of the
characteristic set of $c_A$ is pointing away from $U_A^{\min}$. Note that $I_A\cap U_A^{\min}$ is a (perhaps degenerate)
subarc of $I_A$ containing the other endpoint of $I_A$. We define
$c_B$ similarly. Notice that in this case the minimality of $U$ implies that both $A$ and $B$ have rank at least $2$.

\begin{de}
Under the assumptions of Case~2, the set $X_{S,U}\subseteq \widehat{cv_N}$ is the set of all trees 
$T$ that split as graphs of actions over the splitting $S$ and with
attaching points in the vertex trees $T_A,T_B$ belonging to the
characteristic sets of $c_A$ and $c_B$ respectively. 
\end{de}

Next, for $T\in\widehat{cv_N}$ define the basepoint $b_A'(T)\in Char_T(c_A)$ as
the projection of $Char_T(c_B)$ (if the two are disjoint) or the
midpoint of the overlap with $Char_T(c_B)$ (if they are not
disjoint). It follows from Corollary~\ref{basepoint} that this basepoint varies continuously with $T$. Similarly define $b_B'(T)\in
Char_T(c_B)$. Let $T_A$ be the smallest $A$-invariant subtree of $T$ that contains both
$T_A^{\min}$ and $b_A'(T)$, and similarly define $T_B$.

We now define $\rho:\widehat{cv_N} \to X_{S,U}$. 
Given $T\in\widehat{cv_N}$, we let $\rho(T)$ be the tree that splits as a graph of actions over the free splitting
$S$, with vertex actions $T_A$ and $T_B$, with attaching points
$b_A'(T),b_B'(T)$, and with arc length $d_T(b_A'(T),b_B'(T))$. We also
have the obvious morphism $f_T:\rho(T)\to T$. Note that $\rho(T_0)=U$.

\begin{lemma}\label{f-cont-2}
Under the assumptions of Case 2, the morphism $f_T:\rho(T)\to T$
depends continuously on $T$.
\end{lemma}

\begin{figure}
\centering
\input{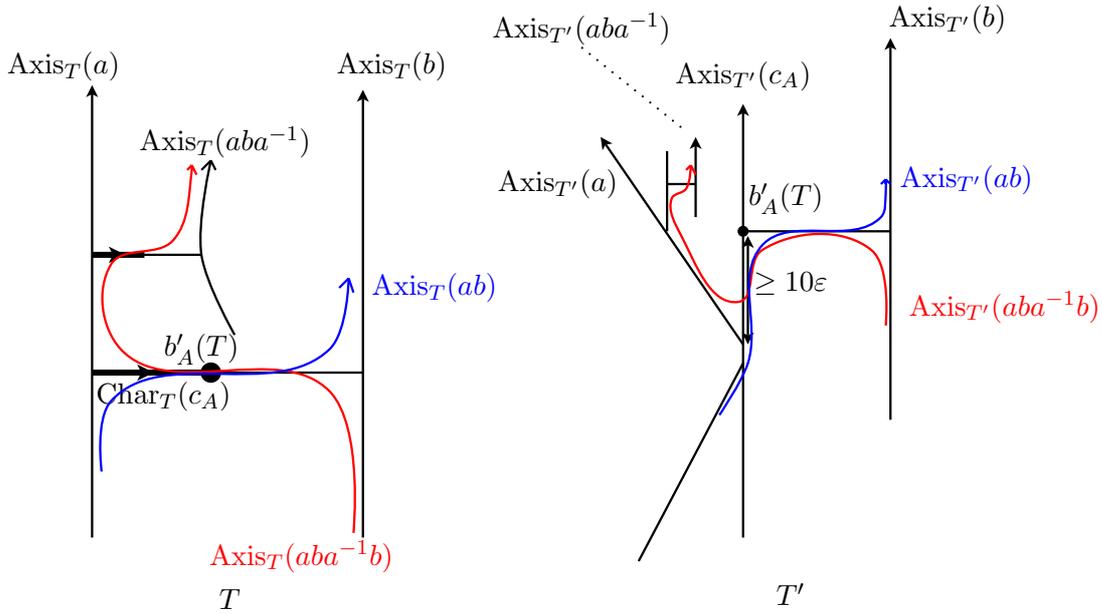}
\caption{The tree $T$ and a tree $T'$ in the neighborhood of $T$ in the last case of the proof of Lemma~\ref{f-cont-2}. In the picture, we have represented the situation where $c_A$ becomes hyperbolic in $T'$.}
\label{fig-f-cont}
\end{figure}

\begin{proof}
Again we need to establish that the tree $\rho(T)$ depends continuously on
$T$. 
Continuity of $\pi(\rho(T))\in\overline{cv_N}$
follows from the construction and Guirardel's Reduction Lemma from
\cite[Section 4]{Gui98}.

Notice that all nontrivial arc stabilizers in $\rho(T)$ are conjugate into either $A$ or $B$. By Lemma~\ref{proving-cv}, in order to complete the proof of the continuity of $\rho$, we only need to associate to any element $\alpha\in F_N$ fixing a nondegenerate arc in $\rho(T)$, an element $\beta\in F_N$ that is hyperbolic in $\rho(T)$ and whose axis has nondegenerate intersection with $\text{Char}_T(\alpha)$, such that the relative orientations of $(\alpha,\beta)$ eventually agree in $\rho(T_n)$ and in $\rho(T)$. This is clear if $\alpha$ fixes a nondegenerate arc in either the $A$- or the $B$-minimal subtree of $T$ (see the proof of Lemma~\ref{f-cont}). 

If these minimal subtrees have empty intersection in $\rho(T)$, and $\alpha$ fixes a nondegenerate arc -- say $\alpha=c_A$ -- on the segment that joins them in $T$, then one can choose for $\beta$ an element which is a product $ab$, where $a\in A$ and $b\in B$ are hyperbolic in $T$. Indeed (see Figure~\ref{fig-f-cont}), for every $\epsilon>0$ small enough, we can find a neighborhood of $T$ such that for all trees $T'$ in this neighborhood, the intersection of the characteristic sets of $c_A$ and $c_B$ has size at most $\epsilon$, and the distance between $b'_A(T)$ and the projection of the axis of $b$ to $\Char_{T'}(c_A)$ is at most $\epsilon$, while the distance between $\Char_{T'}(a)$ and $\Char_{T'}(b)$ is at least $10\epsilon$. In addition, the characteristic sets of $c_A$ and of $baba^{-1}$ have nondegenerate intersection whose orientations disagree in a neighborhood of $T$. This implies that for all trees $T'$ in a neighborhood of $T$, the point $b'_A(T)$ is at positive distance from $\Char_{T'}(a)\cap\Char_{T'}(c_A)$ (or from the bridge between these two characteristic sets in case their intersection is empty). It then follows that the characteristic sets of $c_A$ and $ab$ have nondegenerate intersection with agreeing orientations in a neighborhood of $T$.
\end{proof}

\begin{proof}[Proof of Theorem \ref{strategy} in Case 2]
Using Lemma~\ref{f-cont-2}, Properties~(1) to~(4) are proved in the same way as in Case~1 (Proposition~\ref{def-retr} stays valid in Case~2 with the same proof). It remains to prove Property~(5). First, using the first part of Corollary~\ref{basepoint}, we can choose continuous basepoints
  $b_A:\widehat{cv(A)}\to Map(A,\widehat{cv(A)})$ and
  $b_B:\widehat{cv(B)}\to Map(B,\widehat{cv(B)})$ that belong to
  characteristic sets of $c_A$ and $c_B$ respectively (Corollary~\ref{basepoint} was stated for $\overline{cv_N}$ but
  orientations of intervals just carry along). 
 Then construct a map
  $$\Phi:X_{S,U}\to \widehat{cv(A)}\times \widehat{cv(B)}\times
  (-\infty,\infty)\times (-\infty,\infty)\times [0,\infty)$$ by
    sending $T\in X_{S,U}$ to $(T_A^{\min},T_B^{\min},l_A,l_B,d)$
    where    
    $l_A$ is the signed distance, measured along the oriented characteristic set of $c_A$, from $b_A(T_A)$ (viewed as a point in $T$) to $b_A'(T)$, and
    similarly for $l_B$, and $d$ is the length of the attached
    arc. This map is continuous because the various basepoints vary continuously with $T$ and $d=d_T(b'_A(T),b'_B(T))$. 
    
    We claim that the map $\Phi$ is a homeomorphism. Once this claim is established, the argument that $(X_{S,U},X_{S,U}\smallsetminus cv_N)$ is LCC is similar to the proof of Case~1. To prove the claim, we
    construct the inverse $\Psi$. Given
    $(T_A,T_B,l_A,l_B,d)$, where $T_A$ (resp.\ $T_B$) is a minimal $A$-tree (resp.\ $B$-tree), construct $b_A'(T)$ by taking
    the point in $Char_{T_A}(c_A)$ (which is oriented) at the correct
    signed distance of $b_A(T_A)$. More precisely, if $c_A$ is hyperbolic, then the point $b_A'(T)$ is
    uniquely defined. If $c_A$ is elliptic, it may happen that $|l_A|$ is larger
    than the distance between $b_A(T_A)$ and the suitable endpoint of
    $Fix_{T_A}(c_A)$. In that case attach an arc to this endpoint and
    add it to the fixed set of $c_A$ (and equivariantly attach an
    orbit of arcs). Its orientation is determined by the sign of
    $l_A$. This yields a tree $T_A'$. Similarly construct $T_B'$ (in the case where $c_A$ is hyperbolic, we just let $T_A'=T_A$, and likewise on the $B$-side). Then
    $\Psi(T_A,T_B,l_A,l_B,d):=T$ is the graph of actions
    obtained by gluing an arc of length $d$ between $b_A'(T_A)$ and
    $b_B'(T_B)$. By construction $\Psi$ is an inverse of $\Phi$. 
    
    We claim that the map $(T_A,l_A)\mapsto (T_A',b'_A(T_A))$ is continuous; continuity of $\Psi$ will then follow from Guirardel's Reduction Lemma \cite[Section~4]{Gui98}. To prove the claim, it is enough to observe that if $(T_A^n,l_A^n)$ converges to $(T_A,l_A)$, then for all $g\in F_N$, the distance $d_{(T'_A)^{n}}(b'_A(T_A^n),gb'_A(T_A^n))$ converges to $d_{T'_A}(b'_A(T_A),gb'_A(T_A))$. This is proved by using the analogous convergence for the basepoints $b_A$ instead of $b'_A$, and the fact that the basepoints $b_A$ and $b'_A$ always lie at the same distance $l_A$ on the characteristic set of $c_A$.
    \end{proof}

\begin{rk}
Notice that the above proof of Property~(5) does not work in $\overline{cv_N}$ where we do not have a well-defined notion of signed distance along the characteristic set of an element of $F_N$. 
\end{rk}

~\\

\noindent \textbf{Case 3:} \textit{The free splitting $S$ is the Bass--Serre tree
of a decomposition of $F_N$ as a free product $F_N=A\ast B$ and
$u_A\in U_A^{\min}$, $u_B\not\in U_B^{\min}$.}

~\\

Fix a nontrivial element $g_A\in A$. In this hybrid case we define basepoints $b_A(T)$ as in Case 1, using
Lemma~\ref{basepoint}, and
$b'_B(T)$ as in Case 2, projecting the characteristic set of $g_A$ to $Char_T(c_B)$ (or
taking the midpoint of the overlap). 
The argument in this case follows Cases 1 and 2 and is
left to the reader. 
  
\paragraph*{Case of an HNN extension.}

We now briefly explain how to adapt the above construction in the case of
an HNN extension. As the argument is the same as in the free product case (details differ only in notation), we leave the
proof of Proposition \ref{def-retr} to the reader in this case.

\begin{figure}
\begin{center}
\def\JPicScale{.7}
\input{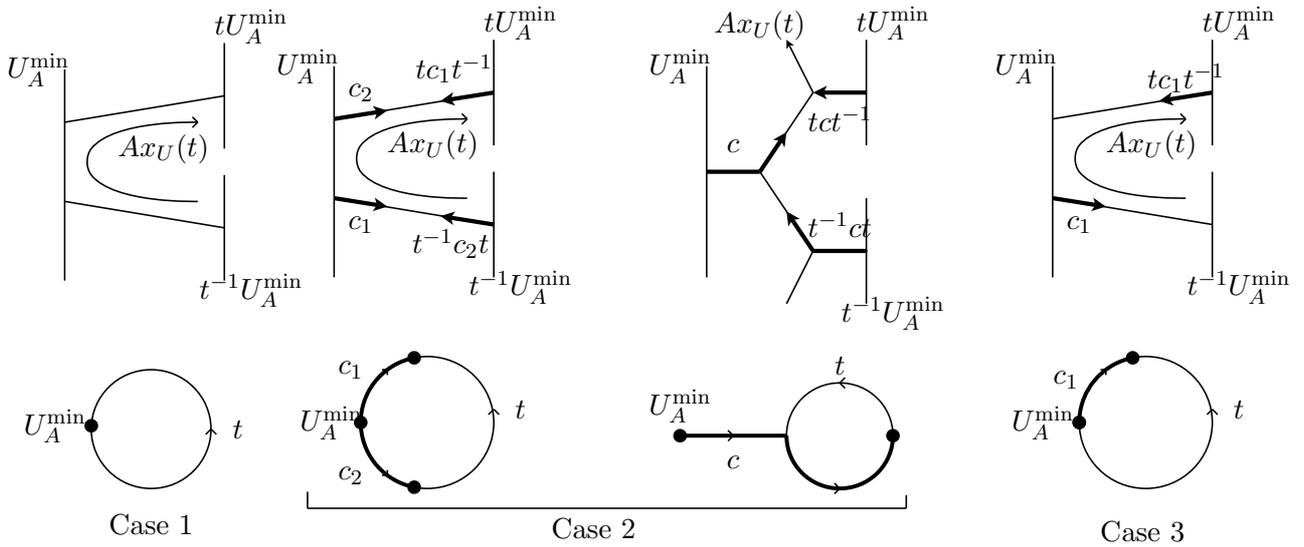}
\caption{The tree $U$ in Case 2 of the proof of Proposition \ref{def-retr}, and the quotient graph of groups.}
\label{fig-splitting}
\end{center}
\end{figure}

We first assume that $N\ge 3$. The tree $U$ has one of the shapes represented in Figure~\ref{fig-splitting}, where $t$ denotes a stable letter for the splitting; this leads to three cases as in the case of a free product. There are two attaching points in $U_A$, which we denote by $u_1$ and $u_2$. 

The first case is the case where both $u_1$ and $u_2$ belong to $U_A^{\min}$. As $N\ge 3$, the rank of $A$ is at least $2$. By Corollary~\ref{basepoint}, there exist continuous maps $b_1:\widehat{cv(A)}\to\text{Map}(A,\widehat{cv(A)})$ and $b_2:\widehat{cv(A)}\to\text{Map}(A,\widehat{cv(A)})$ such that
$b_1(U_A^{\min})(\ast)=u_1$ and $b_2(U_A^{\min})(\ast)=u_2$. We then define $X_{S,U}\subseteq\widehat{cv_N}$ to be the set of all trees $T$ that split as a graph of actions over $S$, with the condition that
both vertex actions $T_A$ and $T_B$ are minimal, 
and the attaching
points are $b_1(T_A)$ and $b_2(T_B)$ respectively. We then argue as in Case~1 of the free product case.

The second case is when neither $u_1$ nor $u_2$ belongs to $U_A^{\min}$. Then $u_1$ and $u_2$ belong to arcs stabilized by elements $c_1,c_2\in F_N$ (in the case where $c_1=c_2$, the points $u_1$ and $u_2$ may be the two extremities of the segment fixed by $c_1=c_2$, but it might also happen that one of them is not an extremity, see Case~2 in Figure~\ref{fig-splitting}). We then define a basepoint $b'_1(T)$ to be the projection of $\text{Char}_{T}(t^{-1}c_2t)$ to $\text{Char}_{T}(c_1)$ if these sets have nondegenerate intersection, or else the midpoint of their intersection. Similarly, we let $b'_2(T)$ be the projection of $\text{Char}_{T}(tc_1t^{-1})$ to $\text{Char}_{T}(c_2)$ if these sets have nondegenerate intersection, or else as the midpoint of their intersection. We then define $X_{S,U}\subseteq \widehat{cv_N}$ to be the set of all trees 
$T$ that split as graphs of actions over the splitting $S$ and with
attaching points belonging to the characteristic sets of $c_1$ and $c_2$ respectively. We argue as in Case~2 of the free product case.

Finally, in the hybrid case where exactly one of the points $u_1,u_2$ belongs to $U_A^{\min}$, we define one basepoint as in Case~1, and one basepoint as in Case~2.

We now assume that $N=2$, so $A=\langle a\rangle$ is isomorphic to $\mathbb{Z}$. In this case, we define $X_{S,U}$ as the set of all trees $T$ that split as a graph of actions over $S$, where the vertex action is either a tree in $\widehat{cv_1}$ or an oriented arc stabilized by $a$. There is a homeomorphism between $X_{S,U}$ and $\widehat{cv_1}\times\mathbb{R}\times\mathbb{R}_+$, a point in $X_{S,U}$ being completely determined by the data of a tree in $\widehat{cv_1}$, together with the signed distance between the two attaching points (when the tree is $\widehat{cv_1}$ is trivial, this is the signed length of the arc fixed by $a$), and the length of the arc coming from $S$.

\appendix
\section{A \v{C}ech homology lemma}\label{sec-Cech}

We regard the $n$-sphere $S^n$ as the boundary of the unit ball
$B^{n+1}$ in $\R^{n+1}$ and $S^{n-1}\subset S^n$ as the equator
$S^n\cap (\R^n\times\{0\})$. The northern (resp.\ southern) hemisphere
$B^n_+$ (resp.\ $B^n_-$) is the set of points in $S^n$ whose last
coordinate is nonnegative (resp.\ nonpositive).  We will say that a map
$h:S^n\to\R$ is {\it standard} if $h^{-1}(0)=S^{n-1}$,
$h^{-1}([0,\infty))=B^n_+$ and $h^{-1}((-\infty,0])=B^n_-$. The reader
is refered to \cite[Chapters IX,X]{eilenberg-steenrod} for basic facts
about \v{C}ech homology.

\begin{lemma}\label{Cech}
Let $n\ge 1$, and let $\tilde h:B^{n+1}\to\R$ be a continuous map whose restriction to $S^n=\partial B^{n+1}$ is standard. Then the inclusion
$$S^{n-1}\hookrightarrow Y:=\tilde h^{-1}(0)$$ is trivial in \v{C}ech
homology $\check H_{n-1}$ with ${\mathbb Z}/2$-coefficients (one has
to consider reduced homology if $n=1$).
\end{lemma}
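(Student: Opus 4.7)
The plan is to reduce the Čech-homology statement to a singular-homology computation on open neighborhoods of $Z := \tilde h^{-1}(0)$, and then conclude by continuity of Čech theory.

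For each $\epsilon>0$ I would introduce the open subsets
$$V_\epsilon^+ := \tilde h^{-1}((-\epsilon,\infty)),\quad V_\epsilon^- := \tilde h^{-1}((-\infty,\epsilon)),\quad V_\epsilon^0 := V_\epsilon^+\cap V_\epsilon^- = \tilde h^{-1}((-\epsilon,\epsilon))$$
of $B^{n+1}$. Then $\{V_\epsilon^+,V_\epsilon^-\}$ is an open cover of $B^{n+1}$, and $\{V_\epsilon^0\}_{\epsilon>0}$ is a cofinal family of open neighborhoods of $Z$. Running Mayer--Vietoris for this cover with $\mathbb{Z}/2$ coefficients, the contractibility of $B^{n+1}$ yields (using reduced homology when $n=1$) an injection
$$H_{n-1}(V_\epsilon^0;\mathbb{Z}/2)\hookrightarrow H_{n-1}(V_\epsilon^+;\mathbb{Z}/2)\oplus H_{n-1}(V_\epsilon^-;\mathbb{Z}/2).$$
Standardness of $\tilde h|_{S^n}$ ensures that $B^n_+\subseteq V_\epsilon^+$ and $B^n_-\subseteq V_\epsilon^-$, so the inclusion $S^{n-1}\hookrightarrow V_\epsilon^+$ factors through the contractible hemisphere $B^n_+$ (whose boundary is $S^{n-1}$). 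Hence the class $[S^{n-1}]$ vanishes in $H_{n-1}(V_\epsilon^+;\mathbb{Z}/2)$ and, symmetrically, in $H_{n-1}(V_\epsilon^-;\mathbb{Z}/2)$; the injection then forces $[S^{n-1}]=0$ in $H_{n-1}(V_\epsilon^0;\mathbb{Z}/2)$ for every $\epsilon>0$.

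Finally I would invoke the ANR-characterization of Čech homology: since $Z$ is compact in the ANR $B^{n+1}$ and $\mathbb{Z}/2$ is a compact coefficient group, there is a natural isomorphism
$$\check H_{n-1}(Z;\mathbb{Z}/2)\;\cong\;\varprojlim_{\epsilon\to 0} H_{n-1}(V_\epsilon^0;\mathbb{Z}/2).$$
The class $\iota_\ast[S^{n-1}]\in\check H_{n-1}(Z;\mathbb{Z}/2)$ corresponds under this isomorphism to the compatible family of images of $[S^{n-1}]$ in the $H_{n-1}(V_\epsilon^0;\mathbb{Z}/2)$, each of which was shown to be zero. Therefore the class vanishes in the inverse limit, which is exactly the required statement. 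The main subtle point is the continuity identification displayed above, i.e.\ the classical fact that Čech homology of a compact subset of an ANR is the inverse limit of the singular homologies of any open neighborhood basis (Eilenberg--Steenrod, Chapter X); the compactness of $\mathbb{Z}/2$ is what ensures there is no $\varprojlim^1$ obstruction to reading off the vanishing directly from the components.
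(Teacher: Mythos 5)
Your proof is correct in its essentials, but it takes a genuinely different route from the paper's, and the Čech-theoretic step deserves a more careful citation.

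The paper triangulates $B^{n+1}$, passes to a handle decomposition, and builds a nested sequence of compact PL manifolds $M_i$ with $\bigcap_i M_i = Y$, in which $S^{n-1}$ visibly bounds the piece $\partial_+ M_i$ of the boundary; it then invokes Eilenberg--Steenrod continuity, which is literally stated for inverse systems of compact pairs, and uses that each $M_i$ is a finite complex so $\check H = H$. You instead avoid all PL machinery: Mayer--Vietoris for the open cover $B^{n+1} = V_\epsilon^+ \cup V_\epsilon^-$ (valid with $\mathbb{Z}/2$ coefficients, contractibility of the ball giving the injection), plus the observation that $B^n_\pm \subseteq V_\epsilon^\pm$ so that $[S^{n-1}]$ is killed in each $H_{n-1}(V_\epsilon^\pm)$. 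This is arguably cleaner and more elementary on the topological side.

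The one place you need to be more careful is the isomorphism $\check H_{n-1}(Z;\mathbb{Z}/2) \cong \varprojlim_\epsilon H_{n-1}(V_\epsilon^0;\mathbb{Z}/2)$. Eilenberg--Steenrod Chapter X (Theorem X.3.1, the reference the paper itself uses) proves continuity for inverse systems of \emph{compact} pairs; it does not state the open-neighborhood version you quote. To get your version you either (a) interleave the open sets with the compact sets $K_\epsilon := \tilde h^{-1}([-\epsilon,\epsilon])$ (since $K_{\epsilon'} \subseteq V_\epsilon^0 \subseteq K_\epsilon$ for $\epsilon' < \epsilon$), so that $\varprojlim \check H_{n-1}(K_\epsilon) = \varprojlim \check H_{n-1}(V_\epsilon^0)$, and then use that Čech homology of an open subset of $\mathbb R^{n+1}$ agrees with singular homology when coefficients are a field --- a true but not entirely trivial fact; or, more economically, (b) run your Mayer--Vietoris argument directly in Čech homology for the \emph{closed} cover $B^{n+1} = \tilde h^{-1}([-\epsilon,\infty)) \cup \tilde h^{-1}((-\infty,\epsilon])$ with intersection $K_\epsilon$. Čech homology of compact pairs with compact coefficients satisfies all the Eilenberg--Steenrod axioms including exactness, hence Mayer--Vietoris, and $B^n_\pm$ sits inside $\tilde h^{-1}([\mp\epsilon,\pm\infty))$ just as in your argument. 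Then $[S^{n-1}]=0$ in $\check H_{n-1}(K_\epsilon)$ for every $\epsilon$, and $\check H_{n-1}(Z)=\varprojlim \check H_{n-1}(K_\epsilon)$ is exactly the compact-pairs continuity theorem. Option (b) makes the argument self-contained within the paper's own reference; as written, your proof has a small gap in the justification of the displayed isomorphism.
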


\begin{proof}
The proof is illustrated in Figure \ref{fig-coho}. 
Represent $Y$ as a nested intersection $\cap_i K_i$ of compact
neighborhoods. 
By the continuity
of \v{C}ech homology \cite[Theorem X.3.1]{eilenberg-steenrod}, we have
$$\check H_{n-1}(Y)=\underset{\leftarrow}{\lim}~ H_{n-1}(K_i),$$
so it suffices to show that $S^{n-1}\hookrightarrow K_i$ is trivial in
(singular) $H_{n-1}$ for every $i$. 

Fix one such $K_i$ and choose $\epsilon>0$ so that
 $\tilde h^{-1}(-2\epsilon,2\epsilon)\subset K_i$.  Let $g:B^{n+1}\to\R$ be a
smooth map such that $|\tilde{h}(x)-g(x)|<\epsilon/2$ for every $x$ and so
that $g$ and $g_0=g|S^n$ have $\epsilon$ as a regular value. Thus
$V=g_0^{-1}(\epsilon)\subset B_+^n$ (in blue on the figure) is an $(n-1)$-manifold and it
bounds the $n$-manifold $g^{-1}(\epsilon)\subset
\tilde h^{-1}(\epsilon/2,3\epsilon/2)\subset K_i$ (in red on the figure). On the other hand, $V$ is
disjoint from $S^{n-1}$ and $V\sqcup S^{n-1}$ bounds the $n$-manifold
$g_0^{-1}[0,\epsilon]\subset
  \tilde h^{-1}(-\epsilon/2,3\epsilon/2)\subset K_i$ (in green on the figure). 
  This shows that the
  cycle $[S^{n-1}]$ is null-homologous in $K_i$.
\end{proof}

\begin{figure}
\begin{center}
\def\JPicScale{.8}
\input{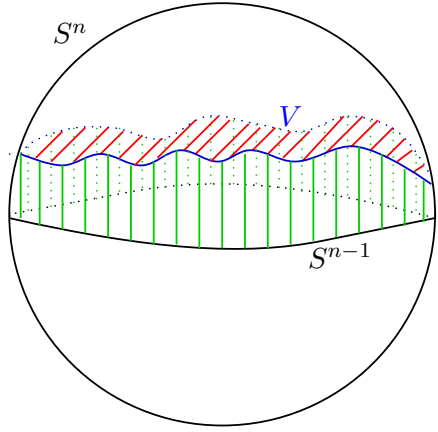}
\caption{The situation in the proof of Lemma \ref{Cech} (illustrated
  on the figure in the case where $n=1$).}
\label{fig-coho}
\end{center}
\end{figure}

\section{Proving that a space is an AR: a topological criterion}\label{sec-miracle}

In this appendix, we establish the ANR criterion that suits our
purpose. Recall from the introduction that a metric space $X$ is
{\it locally $n$-connected} ($LC^n$) if for every $x\in X$ and every open
neighborhood $\calu$ of $x$, there exists an open neighborhood
$\calv\subseteq\calu$ of $x$ such that the inclusion $\calv\hookrightarrow\calu$ is trivial in $\pi_i$ for all $0\leq i\leq n$. A nowhere dense closed subset $Z\subseteq X$ is \emph{locally complementarily $n$-connected} ($LCC^n$) in $X$ if for every $z\in Z$ and every open neighborhood $\calu$ of $z$ in $X$, there exists a smaller neighborhood $\calv\subseteq\calu$ of $z$ in $X$ such that the inclusion $\calv\setminus Z\hookrightarrow\calu\setminus Z$ is trivial in $\pi_i$ for all $0\le i\le n$.

It is a classical fact that a compact $n$-dimensional $LC^n$ metrizable space is an ANR \cite[Theorem V.7.1]{hu}, and we need the following extension. The methods we use are classical, but we could not find this statement in the literature, so we include a proof. We use \cite{hu} as a reference,
but the techniques were established earlier, see
\cite{Kur35,kuratowski2,borsuk,dugundji}.

\begin{theo}\label{miracle}
Let $X$ be a compact metrizable space with $\dim X\leq n$, and let $Z\subset
X$ be a nowhere dense closed subset which is $LCC^n$ in $X$ and such that $X\setminus Z$ is $LC^n$.\\ Then $X$ is an ANR and $Z$ is a $Z$-set in $X$.  \\ If in addition $X\setminus Z$ is
assumed to be contractible, then $X$ is an AR.
\end{theo}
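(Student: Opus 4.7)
The plan is to adapt Hu's proof (\cite[Theorem V.7.1]{hu}) that a finite-dimensional compact metrizable $LC^n$ space is an ANR, by constructing a neighborhood retraction with an extra feature that yields both the ANR and $Z$-set conclusions at once. I will embed $X$ as a closed subset of the Hilbert cube $Q$ (possible since $X$ is compact metrizable) and construct a retraction $r\colon U \to X$ of a neighborhood $U$ of $X$ in $Q$ with the additional property that $r(U \setminus X) \subseteq X \setminus Z$. Existence of such an $r$ gives that $X$ is an ANR. For the $Z$-set conclusion, the finite-dimensional compact subset $X \subseteq Q$ is a $Z$-set in $Q$ in the classical sense, so there is a homotopy $h\colon Q \times [0,1] \to Q$ with $h(\cdot,0) = \mathrm{id}_Q$ and $h(Q \times (0,1]) \subseteq Q \setminus X$; choosing $\epsilon > 0$ small enough that $h(X \times [0,\epsilon]) \subseteq U$, the map $H(x,t) := r(h(x,\epsilon t))$ defines a homotopy on $X$ realizing $Z$ as a $Z$-set in $X$, since $r(h(x,\epsilon t)) \in X \setminus Z$ for all $t > 0$.

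The construction of $r$ follows Hu's skeleton-by-skeleton nerve procedure. Take a locally finite open cover $\{U_\alpha\}$ of $U \setminus X$ with diameters shrinking near $X$, and for each $\alpha$ select a representative point $x_\alpha \in X$ close to $U_\alpha$. The essential modification of Hu's argument is to insist that $x_\alpha \in X \setminus Z$; this is possible because the set of valid representatives is open in $X$ and $X \setminus Z$ is dense in $X$ by nowhere-density of $Z$. This gives the $0$-skeleton of the nerve $N$ mapped into $X \setminus Z$, and one extends skeleton by skeleton up to the $(n+1)$-skeleton of $N$ (higher skeleta require no extension since $\dim X \leq n$). At each step across a $k$-simplex ($1 \leq k \leq n+1$), we must fill a map $\partial\Delta^k \cong S^{k-1} \to X \setminus Z$ with small image by a map $\Delta^k \to X \setminus Z$ with controlled image: far from $Z$ this is provided by the $LC^n$ hypothesis on $X \setminus Z$, and near a point $z \in Z$ it is provided by the $LCC^n$ hypothesis, which yields $\pi_{k-1}$-trivial inclusions $\calv \setminus Z \hookrightarrow \calu \setminus Z$ for nested neighborhoods $\calv \subseteq \calu$ of $z$ in $X$. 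The shrinking diameters of the $U_\alpha$ together with the quantitative control on each extension force continuity of $r$ at every point of $X$, including points of $Z$.

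For the AR conclusion, if $X \setminus Z$ is contractible via a homotopy $G\colon (X \setminus Z) \times [0,1] \to X \setminus Z$, concatenating $H$ (which lands in $X \setminus Z$ for positive time) with $G$ produces a contraction of $X$, so the ANR $X$ is contractible and hence an AR.

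The main obstacle I anticipate lies in the quantitative bookkeeping of the skeleton extension: near a point $z \in Z$ one must select a nested sequence of neighborhoods $\calu_0 \supseteq \calu_1 \supseteq \dots \supseteq \calu_{n+1}$ from the $LCC^n$ hypothesis so that a partial realization valued in $\calu_k \setminus Z$ admits an extension valued in $\calu_{k-1} \setminus Z$ at each inductive step, and these choices must be threaded together coherently as the vertices of the nerve approach $Z$. This is the classical Kuratowski--Dugundji type estimate, now carried out entirely inside $X \setminus Z$, and it is the interplay of $LC^n$ on $X \setminus Z$ with $LCC^n$ on $Z$ that precisely permits the bookkeeping to close up uniformly across $X$.
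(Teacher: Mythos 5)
Your strategy is genuinely different from the paper's. The paper proves an extension claim for compact pairs $(Y,A)$ with $\dim Y \leq n+1$: since $Y\setminus A$ then admits a cover of multiplicity at most $n+2$, the nerve has dimension at most $n+1$, and the $LC^n$/$LCC^n$ hypotheses exactly suffice for the skeleton-by-skeleton extension. The paper then applies this claim with $Y = X\times[0,1]$ to get the $Z$-set property, uses the resulting instantaneous homotopy to deduce that $X$ is $LC^n$, and cites \cite[Theorem V.7.1]{hu} for the ANR conclusion. You instead embed $X$ in the Hilbert cube $Q$ and try to build a retraction $r\colon U\to X$ of a neighborhood $U\subset Q$ with $r(U\setminus X)\subseteq X\setminus Z$. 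Your derivations of the $Z$-set property (via the fact that a finite-dimensional compactum is a $Z$-set in $Q$) and of the AR conclusion are correct, granted the existence of such an $r$, and they are a clean alternative to the paper's use of the claim. But the construction of $r$ has a gap.

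The gap is in the sentence ``one extends skeleton by skeleton up to the $(n+1)$-skeleton of $N$ (higher skeleta require no extension since $\dim X\leq n$).'' The bound $\dim X\leq n$ does not bound the dimension of the nerve $N$ of a cover of $U\setminus X$. Since $Q$ is infinite-dimensional, so is $U\setminus X$, and no locally finite open cover of it can have multiplicity $\leq n+2$; thus $N$ necessarily contains simplices of arbitrarily high dimension. To extend $r$ over a $(k+1)$-simplex you must nullhomotope a map $S^k\to X\setminus Z$ with small image inside a slightly larger set, i.e.\ you need $\pi_k$-control, and the hypotheses $LC^n$ on $X\setminus Z$ and $LCC^n$ on $Z$ give this only for $k\leq n$. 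Passing from $LC^n$/$LCC^n$ to $LC^k$/$LCC^k$ for all $k$ under the hypothesis $\dim X\leq n$ is a nontrivial bootstrapping step, not an automatic consequence; you would need to argue it separately (say, by approximating spheres by maps factoring through $n$-dimensional polyhedra). The paper's route avoids this entirely: by keeping $\dim Y\leq n+1$ the nerve stays $(n+1)$-dimensional, and the full-strength ANR conclusion is outsourced to Hu's theorem, which handles the infinite-dimensional Hilbert cube issue internally.
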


\begin{proof}
The key is to prove the following statement.  \\ \\ {\it Claim.} Let
$Y$ be a compact metrizable space, with $\dim Y\leq n+1$, let $A\subset Y$ be
a closed subset, and let $f:A\to X$ be a map. Then there exists an open
neighborhood $\Oo$ of $A$ in $Y$ and an extension $\tilde f:\Oo\to X$
of $f$ such that $\tilde f(\Oo\setminus A)\subseteq X\setminus
Z$.  \\ \\ \indent We start by explaining how to derive Theorem
\ref{miracle} from the claim. To prove that $Z$ is a $Z$-set, take
$Y=X\times [0,1]$ and $A=X\times\{0\}$, and let $f:A\to X$ be the
identity. Then an extension to a neighborhood produces an
instantaneous homotopy of $X$ off of $Z$, so $Z$ is a $Z$-set (indeed, since $X$
is compact, a neighborhood of $X\times\{0\}$ includes $X\times
[0,\epsilon]$ for some $\epsilon>0$, and we can reparametrize to $X\times [0,1]$). 
  
We now argue that $X$ is $LC^n$, and this will establish that $X$ is
an ANR by \cite[Theorem V.7.1]{hu}. We only need to show that $X$ is $LC^n$ at every point $z\in Z$. Let $z\in Z$ and let $\calu$ be a
neighborhood of $z$ in $X$. Choose a neighborhood $\calv$ of $z$ in $X$ such that
$\calv\setminus Z\hookrightarrow\calu\setminus Z$ is trivial in $\pi_i$ for all
$i\leq n$. Let $f:S^i\to\calv$ be a given map. Using the instantaneous
deformation, we can homotope $f$ to $f':S^i\to \calv\setminus Z$
within $\calv$. But now $f'$ is nullhomotopic within $\calu$ by
assumption.

It remains to prove the claim. Choose an open cover
$\mathcal W$ of $Y\setminus A$ whose multiplicity is at most $n+2$ and
so that the size of the open sets gets small close to $A$. For
example, one could arrange that if $y\in W\in \mathcal W$ then
$\mathrm{diam} W<\frac 12 d(y,A)$ with respect to a fixed metric $d$
on $Y$ (such coverings are called {\it canonical}, see
e.g. \cite[II.11]{hu}). Next, let $N$ be the nerve of $\mathcal W$,
thus $\dim N\leq n+1$. There is a natural topology on $A\cup N$ where
$A$ is closed, $N$ is open, and a neighborhood of $a\in A$ induced by
an open set $\Oo\subset Y$ with $a\in \Oo$ is $\Oo\cap A$
together with the interior of the subcomplex of $N$ spanned by those
$W\in\mathcal W$ contained in $\Oo$. For more details see
e.g. \cite[II.12]{hu}.

Since there is a natural map $\pi:Y\to A\cup N$ that takes $A$ to $A$
and $Y\setminus A$ to $N$ (given by a partition of unity) it
suffices to prove the claim after replacing $Y$ by $A\cup N$.

The extension is constructed by induction on the skeleta of $N$, and
uses the method of e.g. \cite[Theorem V.2.1]{hu}. The only difference
is that we want in addition $\tilde f(\Oo\setminus A)\subseteq
X\setminus Z$.

To
extend $f:A\to X$ to the vertices of $N$, use the assumption that $Z$
is nowhere dense in $X$ to send a vertex $v$ close to some $a\in A$ to a 
point in $X\setminus Z$ close to $f(a)$.
Inductively, suppose $0\leq i\leq n$, 
and $f$ has been extended to the $i$-skeleton of some subcomplex $N_i$
of $N$, in such a way that 
\begin{itemize}
\item $f(N_i)\subseteq X\setminus Z$,
\item $A\cup N_i$ contains a neighborhood of $A$ in $A\cup N$, and 
\item for every $\delta>0$, there exists a neighborhood $\calu_{i,\delta}$ of $A$ in $A\cup N_i$ such that the $f$-image of every $i$-simplex of $N_i$ contained in $\calu_{i,\delta}$ has diameter at most $\delta$.
\end{itemize}

Since $X$ is compact and $Z$ is $LCC^n$ in $X$, there exists $\delta_{i+1}>0$, and a function $r:(0,\delta_{i+1})\to\mathbb{R}_+$, with $r(t)\to 0$ as $t$ decreases to $0$, such that any map $\phi:S^i\to X\setminus Z$ whose image has diameter at most $d<\delta_{i+1}$,
extends to a map $\tilde\phi:B^{i+1}\to X\setminus Z$ whose image has diameter at most
$r(d)$. Now apply this to every $(i+1)$-simplex in $N_i$
such that the image of its boundary has
diameter strictly smaller than $\delta_{i+1}$. When extending to the simplex, always arrange
that the diameter of the image is controlled by the function $r$. This
completes the inductive step.
\end{proof}

\bibliographystyle{amsplain}
\bibliography{AR-bib-final}

\small

\sc \noindent Mladen Bestvina, Department of Mathematics, University of Utah, 155 South 1400 East, JWB 233, Salt Lake City, Utah 84112-0090, United States

\noindent \tt e-mail:bestvina@math.utah.edu
\\
\\
\sc \noindent Camille Horbez, Laboratoire de Math\'ematiques d'Orsay, Univ. Paris-Sud, CNRS, Universit\'e Paris-Saclay, 91405 Orsay, France.

\noindent \tt e-mail:camille.horbez@math.u-psud.fr 
 
\end{document}